\documentclass[11pt,a4paper]{article}
\usepackage[T1]{fontenc}
\usepackage[francais,english]{babel}
\usepackage[applemac]{inputenc}
\usepackage{pb-diagram}
\usepackage{amscd}
\pagestyle{plain}%
\usepackage{enumerate}
\pagenumbering{arabic}%
\setlength{\parskip}{15pt}%
\usepackage[vmargin=3.5cm]{geometry}%
%
\setlength{\parindent}{12pt}%
\usepackage{color}%
\usepackage{mdwlist}%
\usepackage{graphicx}%
\usepackage{float}%
\usepackage{makeidx}%
\setcounter{secnumdepth}{1}%
\usepackage{amsmath}%
\usepackage{amssymb}%
\usepackage{amsthm}%
\usepackage{amsfonts}%
\usepackage{mathrsfs}%
\usepackage{bbm}%
\usepackage[all,cmtip]{xy}%
\usepackage{rotate}%
\usepackage{yfonts}%
\usepackage{array}%
%
\newtheorem{Prop}{Proposition}[section]%
\newtheorem{Theo}[Prop]{Theorem}
\newtheorem{Def}[Prop]{Definition}%
\newtheorem{Cor}[Prop]{Corollary}%
\newtheorem{Lem}[Prop]{Lemma}%
\newtheorem{Conj}[Prop]{Conjecture}%
\newtheorem{Hyp}[Prop]{Assumption}%
%
%
%
%
\newtheorem{DefEnglish}[Prop]{Definition}

\newtheorem{LemEnglish}[Prop]{Lemma}
\newtheorem{HypEnglish}[Prop]{Assumption}%
%
%
%
%
%
%
%
\newcommand{\A}{\mathbb A}%

\newcommand{\Bbf}{\mathbf B}
\newcommand{\C}{\mathbb C}%
\newcommand{\fp}{\mathbb F_{p}}%
\newcommand{\Fp}{\mathbb F}%

\newcommand{\G}{\GL_{2}}

\newcommand{\N}{\mathbb N}%
\newcommand{\Q}{\mathbb Q}%
\newcommand{\qp}{\mathbb Q_{p}}%
\newcommand{\ql}{\mathbb Q_{\ell}}%
\newcommand{\R}{\mathbb R}%
\newcommand{\Z}{\mathbb Z}%
\newcommand{\zp}{\mathbb Z_{p}}%
%
%
%
%
%

%
%
%
%
%
%
%
%
\newcommand{\Lcal}{\mathcal L}%
\newcommand{\Ncal}{\mathcal N}%
\newcommand{\Ocal}{\mathcal O}%
\newcommand{\Kcal}{\mathcal K}%
\newcommand{\Wcal}{\mathcal W}%
%
%

%

%
%
%

%
\newcommand{\Fcali}{\mathscr F}%
\newcommand{\Xcali}{\mathscr X}%
%
%
%
%
\newcommand{\aid}{\mathfrak a}%
\newcommand{\bid}{\mathfrak b}%
\newcommand{\cid}{\mathfrak c}%
\newcommand{\pid}{\mathfrak p}%
\newcommand{\mgot}{\mathfrak m}%
%
%
%
%
%
%
%
%
%
%
\newcommand{\GL}{\operatorname{GL}}%

\newcommand{\SL}{\operatorname{SL}}

\newcommand{\et}{\operatorname{et}}

\newcommand{\diamant}[1]{\langle#1\rangle}%
\newcommand{\somme}[2]{\underset{#1}{\overset{#2}\sum}}%
\newcommand{\produit}[2]{\underset{#1}{\overset{#2}\prod}}%
\newcommand{\produittenseur}[2]{\underset{#1}{\overset{#2}\bigotimes}}%
\newcommand{\sommedirecte}[2]{\underset{#1}{\overset{#2}\bigoplus}}%
%
%
%
%
%
%
%
%
%
%

%

%
%
%

%
\newcommand{\suiteexacte}[5]{0\fleche#3\overset{#1}{\fleche}#4\overset{#2}{\fleche}#5\fleche0}
\newcommand{\cardinal}[1]{|#1|}

\newcommand{\limproj}[1]{\underset{\underset{#1}\longleftarrow}\lim}

\newcommand{\isom}{\overset{\sim}{\longrightarrow}}
\newcommand{\image}{\operatorname{Im}}

\newcommand{\plonge}{\hookrightarrow}

\newcommand{\tors}{\operatorname{tors}}%
\newcommand{\ord}{\operatorname{ord}}
\newcommand{\mot}{\operatorname{mot}}

\newcommand{\tenseur}{\otimes}

\newcommand{\Ltenseur}{\overset{\operatorname{L}}{\tenseur}}
\newcommand{\modulo}{\operatorname{ mod }}
\newcommand{\Spec}{\operatorname{Spec}}

\newcommand{\Id}{\operatorname{Id}}%
\newcommand{\Frac}{\operatorname{Frac}}%
\newcommand{\Tate}{\operatorname{Ta}}%
\newcommand{\Cone}{\operatorname{Cone}}%
\newcommand{\fleche}{\longrightarrow}%
\newcommand{\partiediv}{\operatorname{div}}%
\newcommand{\croix}{^{\times}}%
\newcommand{\surjection}{\twoheadrightarrow}%
\newcommand{\rhobar}{\bar{\rho}}%
\newcommand{\vide}{\varnothing}%

%
%
%
%
%
%
\newcommand{\sm}{\operatorname{sm}}%
\newcommand{\dR}{\operatorname{dR}}%
\newcommand{\Fil}{\operatorname{Fil}}

%
\newcommand{\Hun}{H^{1}}

\newcommand{\Htilde}{\tilde{H}}

\newcommand{\Htildeun}{\tilde{H}^{1}}
\newcommand{\Htildetildeun}{\tilde{\tilde{H}}^{1}}

\newcommand{\RGamma}{\operatorname{R}\Gamma}%
\newcommand{\Det}{\operatorname{{D}et}}%
\newcommand{\Tam}{\operatorname{Tam}}%
\newcommand{\Sel}{\operatorname{Sel}}%

%
\newcommand{\tr}{\operatorname{tr}}
\newcommand{\Classe}{\operatorname{Cl}}%
\newcommand{\Fr}{\operatorname{Fr}}%
\newcommand{\Gal}{\operatorname{Gal}}

\newcommand{\Qbar}{\bar{\Q}}%
\newcommand{\qpbar}{\Qbar_{p}}%
\newcommand{\Fpbar}{\bar{\mathbb F}}%
\newcommand{\Dbar}{\bar{D}}
%
%
%
%
\newcommand{\kg}{\kappa}%
\newcommand{\la}{\lambda}
\newcommand{\La}{\Lambda}%
\newcommand{\s}{\sigma}%
\newcommand{\Si}{\Sigma}
%
\newcommand{\hgot}{\mathfrak h}%
\newcommand{\Hecke}{\mathbf{T}}%
\newcommand{\Hs}{\Hecke^{\Sigma}}
\newcommand{\Eul}{\operatorname{Eul}}
\newcommand{\eqdef}{\overset{\operatorname{def}}{=}}

\DeclareFontEncoding{OT2}{}{} 
  \newcommand{\textcyr}[1]{%
    {\fontencoding{OT2}\fontfamily{wncyr}\fontseries{m}\fontshape{n}%
     \selectfont #1}}%
\newcommand{\Sha}{{\mbox{\textcyr{Sh}}}}%
\newcommand{\Nekovar}{Nekov\'a\v{r}}%
\newcommand{\Iwagood}{Iwasawa-suitable\ }%
%
%

\newcommand{\TW}{\mathbf{TW}}
\newcommand{\PD}{\mathbf{PD}}

\setcounter{secnumdepth}{3}%
\setlength{\parskip}{18pt}%
\numberwithin{equation}{subsection}%
\date{}
\begin{document}%
\title{The Equivariant Tamagawa Number Conjectures for modular motives with coefficients in Hecke algebras}
\author{Olivier Fouquet}%
\maketitle

%
\selectlanguage{english}%

\newcommand{\hord}{\mathfrak h^{\ord}}%
\newcommand{\hdual}{\mathfrak h^{dual}}%
\newcommand{\matricetype}{\begin{pmatrix}\ a&b\\ c&d\end{pmatrix}}%
\newcommand{\Iw}{\operatorname{Iw}}%
\newcommand{\psiwa}{\psi_{\operatorname{Iw}}}%
\newcommand{\Lambdaf}{\mathbf{\Lambda}}
\newcommand{\Hi}{\operatorname{Hi}}
\newcommand{\cyc}{\operatorname{cyc}}
\newcommand{\ab}{\operatorname{ab}}
\newcommand{\can}{\operatorname{can}}%
\newcommand{\Fitt}{\operatorname{Fitt}}%
\newcommand{\Tfiwa}{T_{f,\operatorname{Iw}}}%
\newcommand{\Vfiwa}{V_{f,\operatorname{Iw}}}%
\newcommand{\Afiwa}{A^{*}_{f,\operatorname{Iw}}(1)}%
\newcommand{\Af}{\operatorname{A}}%
\newcommand{\Dunzero}{D_{1,0}}%
\newcommand{\Uun}{U_{1}}%
\newcommand{\Uzero}{U_{0}}%
\newcommand{\Uundual}{U^{1}}%
\newcommand{\Uunun}{U^{1}_{1}}%
\newcommand{\Wdual}{\Wcal^{dual}}
\newcommand{\JunNps}{J_{1,0}(\Ncal, P^{s})}%
\newcommand{\Tatepord}{\Tate_{\pid}^{ord}}%
\newcommand{\Kum}{\operatorname{Kum}}%
\newcommand{\zcid}{z(\cid)}%
\newcommand{\kgtilde}{\tilde{\kappa}}%
\newcommand{\kiwa}{\varkappa}%
\newcommand{\kiwatilde}{\tilde{\varkappa}}%
\newcommand{\Hbar}{\bar{H}}%
\newcommand{\Tred}{T/\mgot T}%
\newcommand{\Riwa}{R_{\operatorname{Iw}}}%
\newcommand{\Oiwa}{\Ocal_{\operatorname{Iw}}}%
\newcommand{\Kiwa}{\Kcal_{\operatorname{Iw}}}%
\newcommand{\Sp}{\Sigma^{p}}%
\newcommand{\Ap}{A_{\pid}}%
\newcommand{\Liwa}{\Lambda_{\operatorname{Iw}}}%
\newcommand{\liwa}{\lambda_{\operatorname{Iw}}}%
\newcommand{\liwaf}{\lambda(f)_{\operatorname{Iw}}}%
\newcommand{\Viwa}{\mathcal V_{\operatorname{Iw}}}%
\newcommand{\pseudiso}{\overset{\centerdot}{\isom}}%
\newcommand{\pseudisom}{\overset{\approx}{\fleche}}%
\newcommand{\carac}{\operatorname{char}}%
\newcommand{\length}{\operatorname{length}}
\newcommand{\eord}{e^{\ord}}%
\newcommand{\eordm}{e^{\ord}_{\mgot}}%
\newcommand{\hordinfini}{\hord_{\infty}}%
\newcommand{\Mordinfini}{M^{\ord}_{\infty}}%
\newcommand{\hordm}{\hord_{\mgot}}%
\newcommand{\hminm}{\hgot^{min}_{\mgot}}%
\newcommand{\Mordm}{M^{\ord}_{\mgot}}%
\newcommand{\Mtwist}{M^{tw}_{\mgot}}
\newcommand{\Xun}{X_{1}}%
\newcommand{\Xundual}{X^{1}}%
\newcommand{\Xunun}{X^{1}_{1}}%
\newcommand{\Xtw}{X^{tw}}
\newcommand{\Inert}{\mathfrak{In}}%
\newcommand{\Ts}{T_{\Sigma}}
\newcommand{\Tsprime}{T_{\Sigma'}}
\newcommand{\Tla}{T_{\la}}
\newcommand{\Vlaiwa}{V_{\la,\Iw}}
\newcommand{\Tpsi}{T_{\psi}}
\newcommand{\Tpsiwa}{T_{\psi,\Iw}}
\newcommand{\Vpsiwa}{V_{\psi,\Iw}}
\newcommand{\Vla}{V_{\la}}
\newcommand{\rla}{\rho_{\la}}
\newcommand{\Tsx}{T_{\Sigma}^{x_{1}}}
\newcommand{\Vs}{V_{\Sigma}}
\newcommand{\Rs}{R_{\Sigma}}
\newcommand{\Rsr}{R_{\Sigma}(\rhobar)}
\newcommand{\Xsr}{\Xcali_{\Sigma}(\rhobar)}
\newcommand{\Xsm}{\Xcali^{\sm}_{\Sigma}(\rhobar)}
\newcommand{\Rsrchi}{R_{\Sigma}^{\chi}(\rhobar)}
\newcommand{\Raid}{R(\aid)}
\newcommand{\Faid}{\Frac(\aid)}%
\newcommand{\Taid}{T(\aid)}
\newcommand{\Vaid}{V(\aid)}
\newcommand{\zaid}{\z(\aid)}
\newcommand{\Hsm}{\mathbf{T}_{\mgot_{\rhobar}}^{\Sigma}}
\newcommand{\Hsaid}{\mathbf{T}(\aid)}
\newcommand{\Hsmprime}{\mathbf{T}_{\mgot_{\rhobar}}^{\Sigma'}}
\newcommand{\Hsmprimeprime}{\mathbf{T}_{\mgot_{\rhobar}}^{\Sigma''}}
\newcommand{\mr}{\mgot_{\rhobar}}
\newcommand{\Hsmr}{\mathbf{T}_{\mgot_{\rhobar}}^{\Sigma}}
\newcommand{\Hsmrord}{\mathbf{T}_{\mgot_{\rhobar}}^{\Sigma,\ord}}
\newcommand{\Hsmx}{\mathbf{T}_{\mgot_{\rhobar}}^{\Sigma,x_{1}}}
\newcommand{\Hsx}{\mathbf{T}_{\mgot_{\rhobar}}^{\Sigma,x_{1}}}
\newcommand{\Hr}{\mathbf{T}_{\mgot_{\rhobar}}^{\Sigma}}
\newcommand{\Ds}{\Delta_{\Sigma}}
\newcommand{\Dsprime}{\Delta_{\Sigma'}}
\newcommand{\Tsp}{T(\psi)}%
\newcommand{\Asp}{A_{\Sp}}%
\newcommand{\Vsp}{V_{\Sp}}%
\newcommand{\SK}{\mathscr{S}}%
\newcommand{\Rord}{R^{\ord}}%
\newcommand{\per}{\operatorname{per}}
\newcommand{\z}{\mathbf{z}}
\newcommand{\vbf}{\mathbf{v}}
\newcommand{\vbfbar}{\bar{\mathbf{v}}}
\newcommand{\vbfd}{{\mathbf{v}^{\vee}}}
\newcommand{\zs}{\mathbf{z}_{\Sigma}}
\newcommand{\zsprime}{\mathbf{z}_{\Sigma'}}
\newcommand{\zsl}{\mathbf{z}_{\Sigma,\Lambdaf}}
\newcommand{\zsf}{\mathbf{z}(f)_{\Sigma}}
\newcommand{\ztilde}{\tilde{\z}}
\newcommand{\ziwa}{\z(f)_{\Iw}}
\newcommand{\zsiwa}{\mathbf{z}_{\Sigma,\Iw}}
\newcommand{\zsfiwa}{\mathbf{z}(f)_{\Sigma,\Iw}}
\newcommand{\zka}{{\mathbf{z}_{\operatorname{Ka}}}}
\newcommand{\Ebarbar}{\bar{\bar{E}}}
\newcommand{\Grsym}{\mathfrak S}
\newcommand{\Gr}{\operatorname{Gr}}
\newcommand{\epsi}{\varepsilon}
\newcommand{\Fun}[2]{F^{{\mathbf{#1}}}_{#2}}
\newcommand{\triv}{\operatorname{triv}}
\newtheorem*{TheoA}{Theorem A}%
\newtheorem*{TheoB}{Theorem B}%
\newtheorem{ConjIMC}{Iwasawa Main Conjecture}[section]%
\newtheorem*{ConjETNC}{Theorem B}
\newcommand{\isocan}{\overset{\can}{\simeq}}
\newcommand{\cusps}{\operatorname{cusps}}
\newcommand{\ad}{\operatorname{ad}}
\newcommand{\Lie}{\operatorname{Lie}}
\makeatletter
\newcommand\@biprod[1]{%
  \vcenter{\hbox{\ooalign{$#1\prod$\cr$#1\coprod$\cr}}}}
\newcommand\biprod{\mathop{\mathpalette\@biprod\relax}\displaylimits}
\makeatother
\newcommand{\Etilde}{\tilde{E}}
\newcommand{\Atilde}{\tilde{A}}
\newcommand{\Btilde}{\tilde{B}}
\newcommand{\Var}{\operatorname{Var}}
\newcommand{\Rep}{\operatorname{Rep}}
\newcommand{\Xbar}{\bar{X}}
\newcommand{\Spa}{\operatorname{Spa}}
\newcommand{\Is}{\operatorname{Is}}
\newcommand{\der}{\operatorname{der}}
\newcommand{\CompCoho}{\Htildeun\left(U^{p},\Ocal\right)_{\mgot_{\rhobar}}}
\newcommand{\CompCoh}{\Htildetildeun\left(U^{p},\Ocal\right)_{\mgot_{\rhobar}}}
\newcommand{\CompCohoC}{\Htildeun_{c}\left(U^{p},\Ocal\right)_{\mgot_{\rhobar}}}
\newcommand{\Uaid}{U_{1}(\aid)}
\newcommand{\Uaidl}{U_{1}(\aid)_{\ell}}
\newcommand{\zt}{\tilde{\z}}
\newcommand{\zaidt}{\tilde{\z}(\aid)}
\abstract{Modular motives have coefficients in Hecke algebras. According to the equivariant philosophy, special values of $L$-functions of eigencuspforms should therefore exhibit equivariant properties with respect to various Hecke actions. This manuscript shows that this is indeed the case at least under broad conditions on ramification and deduce from them new properties of the Iwasawa Main Conjecture for modular forms.}

\tableofcontents
%
%
\section{Introduction}
When $p=157$, the Iwasawa cohomology module
\begin{equation}\nonumber
H^{2}_{\Iw}\left(\Z[1/p],\zp(1)\right)\eqdef\limproj{n}\Classe(\Z[\zeta_{p^{n}},1/p])[p^{\infty}]
\end{equation}
is a non-trivial, finitely generated, torsion $\zp[[\Gal(\Q(\zeta_{p^{\infty}})/\Q)]]$-module whose structure is easily described: it is a direct sum of two eigenspaces for the action of $\Gal(\Q(\zeta_{p})/\Q)$ which are both free $\zp$-modules of rank 1 with an explicit action of $\Gal(\Q(\zeta_{p^{\infty}})/\Q(\zeta_{p}))$. These statements belong to classical Iwasawa theory, and they are easily obtained from the fact that $157$ divides $B_{62}$ and $B_{110}$ but no other even Bernoulli number $B_{i}$ for $2\leq i\leq 156$ using the existence of the Kubota-Leopoldt $p$-adic zeta function $\zeta_{p}$ (\cite{KubotaLeopoldt}) and the Iwasawa Main Conjecture (\cite{IwasawaMainConjecture,MazurWiles}).

Suppose now that $p=5$ and that we are asking similar questions for the Iwasawa cohomology module
\begin{equation}\nonumber
H^{2}_{\Iw}\left(\Z[1/p],T_{p}E\right)\eqdef\limproj{n}\ H^{2}_{\et}\left(\Z[\zeta_{p^{n}},1/p],j_{*}T_{p}E\right)
\end{equation}
where $T_{p}E$ is the $p$-adic Tate module of the rational elliptic curve
\begin{equation}\nonumber
E:y^{2}=x^{3}-1825x+306625
\end{equation}
and where $j:\Spec\Q(\zeta_{p^{n}})\fleche\Spec\Z[\zeta_{p^{n}},1/p]$ is the natural morphism. Note that $E$ plainly has bad additive reduction at $p$. Hence, previous general results on the Iwasawa Main Conjecture for elliptic curves do not apply to $E$ (moreover, experts will notice that $E$ has no auxiliary prime with residual Steinberg reduction whereas this is a necessary hypothesis to apply the main current theorems on the Iwasawa Main Conjecture such as \cite{SkinnerUrban,FouquetWan}). Besides, computational exploration of Iwasawa-theoretic properties of $E$ through its $p$-adic $L$-function is impossible, since such a $p$-adic $L$-function is presently lacking at primes of additive reduction. Consequently, questions about the structure of $H^{2}_{\Iw}\left(\Z[1/p],T_{p}E\right)$ are considered hard.

This manuscript formulates and proves versions of the Iwasawa Main Conjecture which are compatible with congruences in $p$-adic families of eigencuspforms (conjecture \ref{ConjUniv} and theorem \ref{TheoIntro} below). A concrete illustration of its main results is as follows: the Iwasawa Main Conjecture (\cite{KatoHodgeIwasawa,KatoEuler}) holds for $E$ and $p$ and $H^{2}_{\Iw}\left(\Z[1/p],T_{p}E\right)$ is a free $\zp$-module of rank 1 with prescribed $\Gal(\Q(\zeta_{p^{\infty}})/\Q(\zeta_{p}))$-action (see section \ref{SubExample}). Unlike the setting of classical Iwasawa theory of the first paragraph, these statement may not be deduced from divisibility properties of special values of the $p$-adic $L$-function, since no such object is known to exist. To bypass the appeal to the $p$-adic $L$-function, level-lowering and level-raising congruences between $E$ and various eigencuspforms of different levels are used, explaining the need for a conjectural framework which is compatible with congruences. 

I learned the theory of $p$-adic families of automorphic representations and the associated deformation theory of Galois representations from \cite{BellaicheChenevier}. The careful study of the Galois-module structure of cohomology groups in analytic families of automorphic representations is the crux of \cite{BellaicheRank} and the idea of exploiting congruences in $p$-adic families of eigencuspforms to go beyond known properties of $p$-adic $L$-functions is the key idea of \cite{BellaicheEigen}. Above all, I found in myself the strength to learn the general theory of special values of $L$-functions of motives in great part thanks to warm personal encouragements provided at a critical point during my studies. I hope it is fitting that this manuscript be dedicated to the memory of the author of these works, the person who so kindly advised me then and thereby encouraged me to pursue research in the arithmetic of special values of $L$-functions: Joël Bellaïche. 

\subsection{Equivariant conjectures for modular motives}
Let $r\in\Z$, $k\geq2$ and $N\geq1$ be three integers. Let $f\in S_{k}(\Gamma_{1}(N))$ be an eigencuspform which we assume to have rational $q$-expansion in this introduction for simplicity of exposition and let $L(f,s)$ be the $L$-function of $f$. In this manuscript, it is consistently assumed that $L(f,r)$ is non-zero. In \cite{SchollMotivesModular}, T. Scholl constructed a Grothendieck motive $\Wcal(f)(r)$ attached to $f$. Let $V(f)_{r,p}$ be the $p$-adic étale realization $\Wcal_{\et,p}(f)(r)$ of $\Wcal(f)(r)$. The Tamagawa Number Conjecture (\cite{BlochKato,FontaineValeursSpeciales}) for the motive $\Wcal(f)(r)$ is the following statement.
\begin{Conj}\label{ConjTNCIntro}
There exists a specified \emph{fundamental line} $\left(\Delta_{\Q}(f),\per_{\C},\{\per_{p}\}_{p}\right)$ where $\Delta_{\Q}(f)$ is a one-dimensional $\Q$-vector space and $\per_{\C}$ and $\per_{p}$ for $p$ a prime are canonical isomorphisms
\begin{equation}\nonumber
\per_{\C}:\Delta_{\Q}(f)\tenseur_{\Q}\C\isocan\C,\ \per_{p}:\Delta_{\Q}(f)\tenseur_{\Q}\qp\isocan\Det^{-1}_{\qp}\RGamma_{c}\left(\Z[1/Np],V(f)_{r,p}\right)
\end{equation}
as well as a \emph{zeta element} $\z(f)\in\Delta_{\Q}(f)$ verifying the following properties.
\begin{enumerate}
\item
\begin{equation}\nonumber
\per_{\C}\left(\z(f)\tenseur1\right)=L(f,r).
\end{equation}
\item For all prime $p$, the $\zp$-module
\begin{equation}\nonumber
\per_{p}\left(\zp\cdot\z(f)\tenseur1\right)\subset\Det^{-1}_{\qp}\RGamma_{c}\left(\Z[1/Np],V(f)_{r,p}\right)
\end{equation}
is equal to $\Det^{-1}_{\zp}\RGamma_{c}\left(\Z[1/Np],T(f)\right)$ where $T(f)$ is any $\zp$-lattice inside $V(f)_{r,p}$ which is stable under the action of $\Gal(\Qbar/\Q)$.
\end{enumerate}
\end{Conj}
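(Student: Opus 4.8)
The plan is to deduce the Tamagawa Number Conjecture (\cite{BlochKato}, \cite{FontaineValeursSpeciales}) for $\Wcal(f)(r)$ from a cyclotomic Iwasawa Main Conjecture for $f$ by Iwasawa descent. The fundamental line $\left(\Delta_{\Q}(f),\per_{\C},\{\per_{p}\}_{p}\right)$ is built from Scholl's motive: $\Delta_{\Q}(f)$ combines the determinant of motivic cohomology with the inverse determinant of the de Rham realization modulo its Hodge filtration, $\per_{\C}$ is Beilinson's regulator composed with the period comparison, and $\per_{p}$ is assembled from the comparison isomorphisms of $p$-adic Hodge theory and the Artin--Verdier/Poitou--Tate description of $\RGamma_{c}\left(\Z[1/Np],V(f)_{r,p}\right)$. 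The zeta element $\z(f)$ is the Beilinson--Kato element in the motivic cohomology of the relevant Kuga--Sato variety, equivalently the bottom layer of Kato's Euler system; assertion (1), $\per_{\C}\left(\z(f)\tenseur1\right)=L(f,r)$, is then Beilinson's theorem together with Kato's explicit reciprocity law, the standing hypothesis $L(f,r)\neq0$ and the functional equation serving to place $r$ in the range where the regulator is computed. Assertion (2), the integrality of $\z(f)$ at each prime $p$, is the essential point.

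\textbf{Reduction of (2) to an Iwasawa Main Conjecture.} Fix $p$ and a Galois-stable lattice $T(f)\subset V(f)_{r,p}$. Over $\Lambda=\zp[[\Gal(\Q(\zeta_{p^{\infty}})/\Q)]]$ form the complex $\RGamma_{c}\left(\Z[1/Np],T(f)\tenseur_{\zp}\Lambda\right)$ and the $\Lambda$-adic zeta element $\ziwa$ obtained by varying Kato's Euler system along the cyclotomic tower. I would phrase the Main Conjecture in determinant form as the equality of invertible $\Lambda$-modules $\Det^{-1}_{\Lambda}\RGamma_{c}\left(\Z[1/Np],T(f)\tenseur_{\zp}\Lambda\right)=\Lambda\cdot\ziwa$. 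Granting it, base change along $\Lambda\to\zp$ identifies the left side with $\Det^{-1}_{\zp}\RGamma_{c}\left(\Z[1/Np],T(f)\right)$ (the control is unconditional since $\Z[1/Np]$ contains $p$) and carries $\ziwa$ to $\z(f)$, yielding $\per_{p}\left(\zp\cdot\z(f)\tenseur1\right)=\Det^{-1}_{\zp}\RGamma_{c}\left(\Z[1/Np],T(f)\right)$; this is \Nekovar's Iwasawa descent, legitimate because $L(f,r)\neq0$ makes $H^{2}_{\Iw}$ a torsion $\Lambda$-module and suppresses the $H^{1}$ error term, and the independence of $T(f)$ is automatic as only determinants of perfect complexes appear. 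One inclusion --- the Euler system divisibility, $\ziwa\in\Det^{-1}_{\Lambda}\RGamma_{c}$ --- is Kato's theorem (\cite{KatoEuler}, \cite{KatoHodgeIwasawa}), a consequence of the Euler system machinery of Rubin and Perrin-Riou and the explicit reciprocity law; it holds for all $f$ under mild largeness hypotheses on $\rhobar$.

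\textbf{The reverse inclusion: the main obstacle.} The opposite inclusion is the crux, and it is precisely here that coefficients in Hecke algebras become indispensable. The known route to it --- Eisenstein congruences on a unitary group (\cite{SkinnerUrban}, \cite{FouquetWan}) --- delivers it only when $\rhobar$ is ramified with unipotent image at some auxiliary prime, a hypothesis that fails for forms of additive reduction such as the elliptic curve $E$ of the introduction at $p=5$. My plan is to formulate and prove the Main Conjecture not for $f$ alone but over the big Hecke algebra $\Hsmr$ of a suitable tame level $\Sigma$ (the localization at the maximal ideal $\mr$ attached to $\rhobar$ of the Hecke algebra acting on $p$-adic modular forms, incorporating the deformation in weight as well as the cyclotomic one) --- equivalently over the universal deformation ring $\Rsr$ --- as an equality $\Det^{-1}_{\Hsmr}\RGamma_{c}(\Ts)=\Hsmr\cdot\zs$ of invertible $\Hsmr$-modules. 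Since these are free rank-one modules, the equality may be checked after base change to a single well-chosen arithmetic point of $\Spec\Hsmr$, provided the formation of $\zs$ and of $\RGamma_{c}(\Ts)$ commutes with that specialization; one then picks, by level-raising congruences in the style of Ribet and Diamond--Taylor, a point corresponding to a newform $g$ congruent to $f$ modulo $p$ that is Steinberg at a suitable prime, so that a Skinner--Urban-type theorem applies to $g$ and establishes the reverse inclusion there, and propagating over $\Spec\Hsmr$ gives the Main Conjecture for $f$ --- and for every other member of the family. I expect the hard parts to be (i) the control theorem identifying the specialization of $\RGamma_{c}(\Ts)$ and of $\zs$ at each arithmetic point with the corresponding individual objects, including the delicate local term at $p$ for members of additive reduction, which forces a sufficiently flexible local deformation condition at $p$ together with a check that no exceptional-zero phenomenon obstructs the final descent; and (ii) the uniform evaluation across the family of the local Tamagawa factors at the primes dividing $N$, so that assertion (2) unwinds cleanly from the determinant identity.
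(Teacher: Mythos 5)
Your outline of the fundamental line, the zeta element, and the reduction of assertion (2) to a cyclotomic Iwasawa Main Conjecture by descent matches the framework of the paper, and the decision to work over the big Hecke algebra $\Hsmr$ (equivalently the universal deformation ring) and to exploit level-raising congruences is indeed the paper's strategy. But there is a genuine gap at the decisive step, and it lies exactly where the paper's main technical innovation sits.

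You claim that since $\Det^{-1}_{\Hsmr}\RGamma_{c}(\Ts)$ and $\Hsmr\cdot\zs$ are free of rank one, ``the equality may be checked after base change to a single well-chosen arithmetic point''. This is not true as stated: two principal $\Hsmr$-submodules of $Q(\Hsmr)$ can agree after a single specialization to $\Oiwa$ without being equal. What is true---and it is the commutative-diagram argument at the end of the proof of the paper's main theorem---is that \emph{once the inclusion} $\zs:\Ds(\Ts)^{-1}\plonge\Hsmr$ \emph{is established}, the image of a generator being a unit at one specialization forces it to be a unit upstairs, because $\Hsmr$ is local and $\la$ is a local homomorphism. So the entire weight of the argument rests on first establishing that inclusion, that is, the Euler system divisibility with coefficients in $\Hsmr$. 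Your proposal does not address this: you only cite Kato's divisibility over $\Lambda_{\Iw}$ at a single eigenform. Since $\Hsmr$ is neither a domain nor regular in general, those pointwise divisibilities do not glue; knowing $x/y\in\Oiwa$ at every motivic $\la$ does not yield $x/y\in\Hsmr$, because motivic points are nowhere near all of the height-one primes. The paper fills this gap by constructing a Taylor--Wiles--Kisin patching system whose limit has regular irreducible components, showing that a putative failure of the divisibility over $\Hsmr$ would persist in the patched system, descending that failure to a specialization valued in a two-dimensional regular ring $S_{\Iw}$, and then contradicting it with Kato's unconditional Euler-system divisibility applied over that regular ring. Skinner--Urban, level raising, and a Steinberg auxiliary point play no role in that argument; they appear only afterward, in the corollary, as a source of a single-point verification once the equivalence between the universal IMC and the single-point IMC has been proved. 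Your plan inverts this logic: you produce the good point via Skinner--Urban but never establish the inclusion over $\Hsmr$ that would make checking at a single point sufficient.
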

Conjecture \ref{ConjTNCIntro} is said to hold at a prime  $p$ if its first statement holds and the second holds for this prime $p$. As reviewed below, many cases of conjecture \ref{ConjTNCIntro} at $p$ are known.

By its very construction, the motive $\Wcal(f)(r)$ is a submotive of a motive $\Wcal(r)$ on which the Hecke algebra $\Hecke$ generated by Hecke operators outside $N$ acts. The equivariant philosophy of K. Kato (\cite{KatoViaBdR,FukayaKato}) suggests that this supplementary action of $\Hecke$ should be reflected in the behavior of special values of $L$-functions as predicted by conjecture \ref{ConjTNCIntro}. More specifically, the following statement should hold.
\begin{Conj}\label{ConjETNCIntro}
Denote by $Q(\Hecke)$ the total ring of quotients of $\Hecke$. There exists a free rank one $\Hecke$-module $\Delta(\Wcal(r))$ and a \emph{Hecke-equivariant zeta element} $\z\in\Delta_{Q(\Hecke)}(\Wcal(r))$ where
\begin{equation}\nonumber
\Delta_{Q(\Hecke)}(\Wcal(r))\eqdef\Delta(\Wcal(r))\tenseur Q(\Hecke)
\end{equation} satisfying the following properties.
\begin{enumerate}
\item The Hecke-equivariant zeta element $\z$ is a basis of $\Delta(\Wcal(r))$.
\item For all system of eigenvalues $\lambda(f):\Hecke\fleche\Qbar$ attached to an eigencuspform $f$, the natural map from $\Wcal(r)$ onto $\Wcal(f)(r)$ canonically induces an isomorphism
\begin{equation}\nonumber
\Delta(\Wcal(r))\tenseur_{\Hecke,\lambda(f)}\Qbar\isocan\Delta_{\Q}(f)\tenseur\Qbar
\end{equation}
which sends $\z\tenseur1$ to $\z(f)\tenseur1$.
\item For all prime $p$, there is a canonical isomorphism 
\begin{equation}\nonumber
\Delta(\Wcal(r))\tenseur_{\Hecke}\Hecke_{p}\isocan\Det^{-1}_{\Hecke_{p}}\RGamma_{c}\left(\Z[1/Np],T\right)
\end{equation}
where $\Hecke_{p}$ is $\Hecke\tenseur_{\Z}\zp$ and $T$ is any $\Gal(\Qbar/\Q)$-stable $\Hecke_{p}$-lattice inside the $p$-adic étale realization of $\Wcal(r)$ seen as $\Hecke\tenseur_{\Q}\qp$-module.
\end{enumerate}
\end{Conj}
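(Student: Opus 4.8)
The plan is to realise the fundamental line as the inverse determinant of a perfect complex of $\Hecke$-modules, to produce $\z$ from Kato's Euler system assembled over $\Hecke$, and then to reduce the integral part of the statement to the Iwasawa Main Conjecture. First I would set $\Delta(\Wcal(r))\eqdef\Det^{-1}_{\Hecke}\RGamma_{c}(\Z[1/Np],T)$ for $T$ a $\Gal(\Qbar/\Q)$-stable $\Hecke$-lattice in the $p$-adic \'etale realisation of $\Wcal(r)$. The point that must be checked is that the relevant interior cohomology of the modular tower is finitely generated and of finite projective dimension over $\Hecke$; this is where $q$-expansion and Poincar\'e duality arguments enter, and localising at a non-Eisenstein maximal ideal $\mgot$ (so that cohomology is concentrated in one degree and free) makes $\Det^{-1}_{\Hecke_\mgot}\RGamma_{c}$ an invertible module and gives property (3) over each $\Hecke_p$. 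That the base change of this module along $\lambda(f)$ is $\Delta_\Q(f)\tenseur\Qbar$, compatibly with the Bloch--Kato formalism for Scholl's motive $\Wcal(f)(r)$, then follows from Poitou--Tate duality together with the comparison of cohomology complexes, which also pins down the module content of property (2).

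Next I would construct $\z$. The Beilinson--Kato elements $\zka$ built from products of Siegel units form a norm-compatible system in the \'etale cohomology of the tower of modular curves, hence assemble into a class in $H^1(\Z[1/Np],T)$ — better, in the cyclotomic Iwasawa cohomology of $T$ — which is $\Hecke$-linear by its very construction, living over $\Hecke$ before any projection to an eigenform. Kato's explicit reciprocity law, and his computation of $\loc_p\zka$, then show that the image under each $\lambda(f)$ of the determinant class built from $\z$ is exactly $\z(f)\tenseur1$, which is property (2), while $\per_\C(\z(f)\tenseur1)=L(f,r)$ is Kato's archimedean computation. Because $L(f,r)\neq0$ for every $f$ in the family by the standing hypothesis, $\z$ is nonzero after every specialisation, hence a non-zero-divisor, hence a basis of $\Delta_{Q(\Hecke)}(\Wcal(r))$; what remains — property (1) in its integral form, equivalently property (3) with $\z$ as the generator — is the assertion that $\z$ generates the invertible $\Hecke$-module $\Delta(\Wcal(r))$ itself and not merely after inverting a non-zero-divisor.

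This last point is the heart of the matter, and it is a descent to the Iwasawa Main Conjecture. Over $Q(\Hecke_p)$ the equality $\Hecke_p\cdot\z=\Det^{-1}_{\Hecke_p}\RGamma_{c}$ is detected at the finitely many minimal primes, i.e. after each $\lambda(f)$, where it is precisely the Bloch--Kato conjecture for $\Wcal(f)(r)$ at $p$ — and Kato's Euler system supplies one divisibility there unconditionally. To upgrade this to an equality of lattices I would pass to the cyclotomic setting, where $\Det^{-1}\RGamma_{\Iw}$ is computed by a torsion $\Hecke_p[[\Gamma]]$-module whose characteristic ideal divides, and under the Iwasawa Main Conjecture for modular forms equals, the ideal generated by $\zka$; working inside \Nekovar's Selmer complex formalism one then checks the equivariant equality at every height-one prime, using that $\Hecke_p$ is reduced and, at the maximal ideals that matter, a complete intersection by Taylor--Wiles, so that an invertible torsion-free module with the prescribed image at each minimal prime is determined by that data. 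The main obstacle is exactly the input ``Iwasawa Main Conjecture for $\Wcal(f)(r)$ at $p$'' for the forms $f$ of additive or otherwise badly ramified reduction, where the theorems of Skinner--Urban and Fouquet--Wan do not apply; the reason the equivariant formulation is the right one is that the perfectness and base change of $\RGamma_{c}$ over $\Hecke$ let one transport the equality through level-raising and level-lowering congruences from an auxiliary form $g$ in the same residual family for which the Main Conjecture is known, the congruence module governing $\Hecke_p$ matching the one governing the zeta elements. Making this transport precise — tracking the Euler factors at the primes where the level changes and controlling the local Tamagawa factors $\Tam_\ell$ equivariantly — is where the real work lies.
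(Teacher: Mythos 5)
The statement you were asked to prove is a \emph{conjecture} in the paper (Conjecture~\ref{ConjETNCIntro}), not a theorem; the paper formulates it, explains that it is a special case of the generalized Iwasawa Main Conjecture~\ref{ConjIMCIntro}, but nowhere claims to prove it unconditionally. What the paper actually establishes (Theorems~\ref{TheoMinIntro}, \ref{TheoIntro}, \ref{TheoCorps} and Corollary~\ref{CorIntro}) is the \emph{equivalence} of a handful of statements --- a classical Iwasawa Main Conjecture at one motivic point, at all motivic points, and the universal one over $\Hsmr$ --- plus a sufficient auxiliary-prime condition for these equivalent statements to hold. Moreover Conjecture~\ref{ConjETNCIntro} concerns the global $\Z$-Hecke algebra $\Hecke$ and requires a single rank-one $\Hecke$-module $\Delta(\Wcal(r))$ whose base change to $\C$ recovers special $L$-values and whose base change to $\Hecke_p$ for \emph{every} prime $p$ is the determinant of compactly supported cohomology; your very first move, defining $\Delta(\Wcal(r))\eqdef\Det^{-1}_{\Hecke}\RGamma_{c}(\Z[1/Np],T)$, already works over $\Hecke_p$ rather than over $\Hecke$, so it constructs a $p$-adic realization, not the motivic object the conjecture asks for. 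Even restricting to a single $p$, this piece too remains conjectural.

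Beyond the mismatch of what is being proved, the decisive gap in your sketch is the final descent step. You claim that because $\Hecke_p$ is reduced and, at the relevant maximal ideals, a complete intersection, ``an invertible torsion-free module with the prescribed image at each minimal prime is determined by that data,'' so the equality of lattices may be detected at height-one (or minimal) primes. This is false in the required generality: over a reduced but non-normal local ring (which is what the Hecke algebras here typically are), an invertible fractional ideal is \emph{not} pinned down by its images on the irreducible components --- the congruence module enters essentially, which is precisely why proving the equivariant statement is, as the paper stresses, ``logically much stronger'' than the collection of TNCs at each eigenform. The paper's route is fundamentally different: it avoids this obstruction entirely by constructing a Taylor--Wiles--Kisin system (Proposition~\ref{PropTWgeneral}) whose patched ring $R^{\square}_{\infty}$ has regular irreducible components (Lemma~\ref{LemTW}), propagating a hypothetical failure of the universal IMC up into the patched system (Lemma~\ref{LemRemonteBis}), descending it to a specialization with regular DVR coefficients (Lemma~\ref{LemDescendBis}), and deriving a contradiction from Kato's Euler system bound there. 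The precise control of fundamental lines under specialization that the argument needs is Proposition~\ref{PropDetComSigma}, and the paper is explicit that this kind of control is \emph{not} available (indeed often false) for Selmer modules or characteristic ideals; your proposal works with $\RGamma_{\Iw}$ and ``the ideal generated by $\zka$'' as though it were. In short, you have correctly identified most of the ingredients --- perfectness of $\RGamma_c$, Kato's construction, level-raising/lowering congruences, the role of residual families --- but the step that would make everything work (a resolution of singularities of the Hecke algebra compatible with zeta morphisms, realized via patching) is missing, and the substitute you propose does not hold.
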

The following equivalent reformulation of conjecture \ref{ConjETNCIntro} in terms of morphisms rather than element is convenient.
\begin{Conj}\label{ConjETNCIntroBis}
There exists a \emph{zeta morphism}
\begin{equation}\nonumber
\z:\Delta_{Q(\Hecke)}(\Wcal(r))\isom Q(\Hecke)
\end{equation}
satisfying the following properties.
\begin{enumerate}
\item The zeta morphism induces an isomorphism $\z:\Delta(\Wcal(r))\isom\Hecke$.
\item Let $\lambda(f):\Hecke\fleche\Qbar$ be system of eigenvalues. Let $\Delta_{\Qbar}(f)$ be $\Delta_{\Q}(f)\tenseur\Qbar$ and let $\z(f):\Delta_{\Qbar}(f)\isom\Qbar$ be the isomorphism sending the basis $\z(f)$ to 1. The  the diagram
\begin{equation}\nonumber
\xymatrix{
\Delta_{\Hecke}\left(\Wcal(r)\right)\ar[d]_{-\tenseur_{\lambda}\Qbar}\ar[r]^(0.6){\z}&\Hecke\ar[d]^{-\tenseur_{\lambda}\Qbar}\\
\Delta_{\Qbar}(f)\ar[r]_(0.6){\z(f)}&\Qbar
}
\end{equation}
is commutative.
\item For $T$ as in conjecture \ref{ConjETNCIntro}, put $\Delta_{\Hecke_{p}}=\Det^{-1}_{\Hecke_{p}}\RGamma_{c}\left(\Z[1/Np],T\right)$. Then the $p$-adic étale realization map induces an isomorphism $\Delta(\Wcal(r))\tenseur_{\Hecke}\Hecke_{p}\isocan\Delta_{\Hecke_{p}}$.
\end{enumerate}
\end{Conj}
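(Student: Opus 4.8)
The plan is to exhibit a dictionary between the data of Conjecture~\ref{ConjETNCIntro} and that of Conjecture~\ref{ConjETNCIntroBis} and to check that the three numbered properties of each formulation correspond under this dictionary. The underlying observation is elementary linear algebra over the ring $\Hecke$: since $\Delta(\Wcal(r))$ is a free $\Hecke$-module of rank one, giving a basis $\z$ of it is the same as giving an $\Hecke$-linear isomorphism $\Delta(\Wcal(r))\isom\Hecke$, namely the one carrying $\z$ to $1$; after applying $-\tenseur_{\Hecke}Q(\Hecke)$ this is the same as a $Q(\Hecke)$-linear isomorphism $\Delta_{Q(\Hecke)}(\Wcal(r))\isom Q(\Hecke)$, and conversely such a $Q(\Hecke)$-linear isomorphism recovers a distinguished element as the preimage of $1$.

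First I would pass from Conjecture~\ref{ConjETNCIntro} to Conjecture~\ref{ConjETNCIntroBis}. Given the zeta element $\z\in\Delta_{Q(\Hecke)}(\Wcal(r))$, define the zeta morphism to be the unique $Q(\Hecke)$-linear map sending $\z$ to $1$; it is an isomorphism because $\z$ is a $Q(\Hecke)$-basis, which in turn holds because property~(1) of Conjecture~\ref{ConjETNCIntro} asserts that $\z$ is already an $\Hecke$-basis of $\Delta(\Wcal(r))$. That same property~(1) says exactly that the morphism restricts to an isomorphism $\Delta(\Wcal(r))\isom\Hecke$, which is property~(1) of Conjecture~\ref{ConjETNCIntroBis}. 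The reverse passage takes a zeta morphism to the preimage of $1$, and property~(1) of~\ref{ConjETNCIntroBis} guarantees that this element lies in and generates $\Delta(\Wcal(r))$; the two constructions are visibly mutually inverse.

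Next I would match property~(2). Applying the specialization functor $-\tenseur_{\Hecke,\lambda(f)}\Qbar$ to the isomorphism $\z:\Delta(\Wcal(r))\isom\Hecke$ and using $\Hecke\tenseur_{\Hecke,\lambda(f)}\Qbar=\Qbar$, one obtains a square with top arrow $\z$, right arrow $-\tenseur_{\lambda}\Qbar$, and left vertical arrow the canonical identification $\Delta(\Wcal(r))\tenseur_{\lambda(f)}\Qbar\isocan\Delta_{\Qbar}(f)$ of~\ref{ConjETNCIntro}(2). Since the latter carries $\z\tenseur1$ to $\z(f)\tenseur1$ by~\ref{ConjETNCIntro}(2), the bottom arrow is forced to be the isomorphism $\z(f):\Delta_{\Qbar}(f)\isom\Qbar$ sending $\z(f)$ to $1$, which is precisely the commutative diagram of~\ref{ConjETNCIntroBis}(2); the argument reverses because the bottom arrow pins down $\z(f)$ as the preimage of $1$. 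Property~(3) is the identical assertion in both formulations — the $p$-adic étale realization induces $\Delta(\Wcal(r))\tenseur_{\Hecke}\Hecke_{p}\isocan\Det^{-1}_{\Hecke_{p}}\RGamma_{c}(\Z[1/Np],T)$ — so there is nothing further to check there.

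The only point requiring genuine care, and the place I expect whatever small friction there is, is the interaction with the total ring of quotients $Q(\Hecke)$: one must know that each relevant system of eigenvalues $\lambda(f)$ factors through a component of $Q(\Hecke)$ (equivalently, corresponds to a minimal prime of $\Hecke$), so that the specialization functors are meaningful on the objects defined over $Q(\Hecke)$ and the two reformulations really range over the same $f$. Because $\Hecke$ is a reduced Noetherian ring — it acts faithfully on a space of cusp forms — this reduces to standard facts about reduced rings, their minimal primes and their normalizations, so the equivalence is in the end a purely formal translation. I would record it as a short lemma, with the module-theoretic dictionary stated once and the three properties matched in turn as above.
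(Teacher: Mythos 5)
Your proposal is correct and is exactly the ``obvious'' dictionary that the paper invokes implicitly when it introduces Conjecture~\ref{ConjETNCIntroBis} with the phrase ``equivalent reformulation'' but supplies no argument: a basis of the free rank-one $\Hecke$-module $\Delta(\Wcal(r))$ corresponds bijectively to a trivialization $\Delta(\Wcal(r))\isom\Hecke$, which after base change to $Q(\Hecke)$ is the same as a $Q(\Hecke)$-linear trivialization whose restriction to the integral lattice lands in $\Hecke$, and properties (2) and (3) match by applying $-\tenseur_{\lambda(f)}\Qbar$ and by inspection respectively. Your cautionary remark about the passage through $Q(\Hecke)$ is well taken but not a real obstacle here, since $\Hecke$ is reduced and the injection $\Delta(\Wcal(r))\plonge\Delta_{Q(\Hecke)}(\Wcal(r))$ already makes the restriction in property (1) of Conjecture~\ref{ConjETNCIntroBis} meaningful without reference to the eigenvalue systems.
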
 
It is important to remark that conjectures \ref{ConjETNCIntro} and \ref{ConjETNCIntroBis} are logically \emph{much stronger} (and correlatively much more delicate to prove) than conjecture \ref{ConjTNCIntro} or even that the collection of conjectures \ref{ConjTNCIntro} for all the submotives of $\Wcal(r)$.

Conjecture \ref{ConjETNCIntroBis} generalizes to $p$-adic families of modular motives. The set-up is as follows. Let $\Si$ be a finite set of finite primes containing $\{\ell|Np\}$. Let $G_{\Q,\Si}$ be the Galois group of the maximal extension of $\Q$ unramified outside $\Si$. Let $\Rs$ be a complete local noetherian ring of mixed characteristic $(0,p)$ and $\Ts$ be a free $R_{\Si}$-module of rank 2 endowed with a continuous $G_{\Q,\Si}$-action. To a point
\begin{equation}\nonumber
\la:\Rs\fleche A
\end{equation}
is attached the $A[G_{\Q,\Si}]$-module $T_{\la}\eqdef\Ts\tenseur_{\Rs,\la}A$. We assume that there exists a set $\Spec^{\mot}R_{\Si}$ of motivic points of $\Rs$ satisfying the following property. If $\Ocal$ is the ring of integers of a finite extension of $\qp$ and if $\la:\Rs\fleche\Ocal$ is in $\Spec^{\mot}R_{\Si}$, there is an eigencuspform $f\in S_{k}(\Gamma_{1}(N))$ and $r\in\Z$ such that $T_{\la}$ is an $\Ocal$-lattice inside $V(f)_{r,p}\tenseur\qpbar$. We then say that $\la$ is attached to the motive $\Wcal(\la)=\Wcal(f)(r)$. The following is then a formulation of the generalized Iwasawa Main Conjecture with coefficients in $\Rs$ for $\Ts$ (\cite[Conjecture 3.2.2]{KatoViaBdR}).
\begin{Conj}\label{ConjIMCIntro}
Define
\begin{equation}\nonumber
\Ds\eqdef\Det^{-1}_{\Rs}\RGamma_{c}\left(\Z[1/\Si],\Ts\right).
\end{equation}
For all $\lambda:\Rs\fleche A$ in $\Spec\Rs$, put
\begin{equation}\nonumber
\Delta_{\la}\eqdef\Det^{-1}_{A}\RGamma_{c}\left(\Z[1/\Si],T_{\la}\right).
\end{equation}
If $\la:\Rs\fleche\Ocal$ is in $\Spec^{\mot}\Rs$, we denote by $\z(f):\Delta_{\la}\isom\Ocal$ the isomorphism given by the zeta element $\z(f)$ of conjecture \ref{ConjETNCIntroBis}.

Then, there exists a \emph{zeta morphism}
\begin{equation}\nonumber
\zs:\Ds\tenseur Q(\Rs)\isom Q(\Rs)
\end{equation}
satisfying the following properties.
\begin{enumerate}
\item The zeta morphism induces an isomorphism $\zs:\Ds\isom\Rs$.
\suspend{enumerate}
\resume{enumerate}
\item Let $\la:\Rs\fleche\Ocal$ be in $\Spec^{\mot}\Rs$. The following diagram of natural maps
\begin{equation}\nonumber
\xymatrix{
\Ds\ar[d]_{-\tenseur_{\lambda}\Ocal}\ar[r]^{\zs}&\Ts\ar[d]^{-\tenseur_{\lambda}\Ocal}\\
\Delta_{\la}\ar[r]_{\z(f)}&\Ocal
}
\end{equation}
is commutative.
\end{enumerate}
\end{Conj}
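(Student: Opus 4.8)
The plan is to realize Conjecture~\ref{ConjIMCIntro} as the modular instance of the Fukaya--Kato formalism (\cite{FukayaKato}): one produces the zeta morphism by interpolating Kato's Euler system over the family, and then checks its two properties by reduction to individual motivic points. To construct $\zs$ over $Q(\Rs)$, recall that Kato's Beilinson elements in the $K$-theory and étale cohomology of the tower of modular curves are compatible under the Hecke correspondences and under the degeneracy maps between levels; over the big Hecke algebra --- hence, via the modularity identification, over $\Rs$ --- they assemble into a class in $\Hun(\Z[1/\Si],\Ts)$, and more precisely into a section of $\Ds$ after inverting a single non-zero-divisor of $\Rs$. Turning this section into an isomorphism $\zs:\Ds\tenseur Q(\Rs)\isom Q(\Rs)$ uses that $\RGamma_{c}(\Z[1/\Si],\Ts)$ is a perfect complex over $\Rs$ which, after that localization, is acyclic outside degree~$1$, together with Kato's explicit reciprocity law at $p$ to trivialize the local contribution.

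Property~(2), compatibility at a motivic point $\la:\Rs\fleche\Ocal$, then follows from base change of perfect complexes, $\Ds\tenseur_{\la}\Ocal\isocan\Delta_{\la}$, together with the norm-compatibility of Kato's elements, which identify the image of $\zs$ with Kato's zeta element for the associated eigenform $f$. That this element coincides with the zeta element $\z(f)$ of Conjecture~\ref{ConjETNCIntroBis} --- that it generates the prescribed $\Ocal$-lattice and realizes $L(f,r)$ through $\per_{\C}$ --- is exactly the Tamagawa number conjecture at $p$ for $\Wcal(f)(r)$, which under the standing hypothesis $L(f,r)\neq0$ is known in the required generality by \cite{KatoEuler}. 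The commuting diagram of property~(2) then characterizes $\zs$ uniquely.

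Property~(1), that $\zs$ restricts to an isomorphism $\Ds\isom\Rs$, is the Iwasawa Main Conjecture proper and the crux of the argument. One divisibility --- that $\zs$ takes values in $\Rs$ with cokernel bounded by the characteristic ideal of the associated Iwasawa--Selmer module --- is delivered by the Euler-system/Kolyvagin machine applied to Kato's system in the family. For the reverse divisibility the usual Eisenstein-ideal input, or that of \cite{SkinnerUrban}, is unavailable (no auxiliary residually Steinberg prime, and possibly additive reduction at $p$), so instead I would argue pointwise: $\Ds$ and $\Rs$ are invertible $\Rs$-modules and $\Rs$ is reduced, so $\zs$ is an isomorphism onto $\Rs$ as soon as each specialization $\zs\tenseur_{\la}\Ocal$ is one --- i.e. as soon as the Tamagawa number conjecture holds at every $\la\in\Spec^{\mot}\Rs$ --- and the set of such $\la$ is Zariski-dense. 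This last statement is bootstrapped from a single well-chosen motivic point, where it holds by Kato's theorem, to the other levels occurring in $\Si$ by level-raising and level-lowering congruences in the spirit of \cite{BellaicheEigen}; this is the very reason for working with a congruence-compatible formulation.

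The main obstacle is this integrality step, and within it the passage from the pointwise statement back to a global equality of invertible $\Rs$-modules: that requires good ring-theoretic control of $\Rs$ --- Gorenstein and freeness properties supplied by Taylor--Wiles patching --- and a careful comparison of the fundamental lines $\Ds$ across the different levels in $\Si$, where the ``broad conditions on ramification'' serve precisely to keep the local Tamagawa factors at the primes $\ell\in\Si$ under control. The other genuinely technical point is the trivialization of the $p$-adic local term when the reduction at $p$ is additive, where Kato's reciprocity law and the Bloch--Kato exponential must be handled with more care than in the good-reduction case.
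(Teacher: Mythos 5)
The statement you are trying to prove is labeled a \emph{Conjecture} in the paper, and it remains a conjecture there: the paper does not claim to prove Conjecture~\ref{ConjIMCIntro} outright. What the paper proves (Theorems~\ref{TheoIntro},~\ref{TheoCorps}) is that, for $\Rs=\Hsmr$ under assumptions~\ref{HypMain} and~\ref{HypLocalDef}, the validity of the conjecture for the full family $\Ts$ is \emph{equivalent} to its validity at a single motivic point; the unconditional statement then holds only when some auxiliary input (e.g.\ Corollary~\ref{CorIntro}) supplies that single point. So your proposal should be measured against the proof of the equivalence theorem, not against a nonexistent proof of the conjecture.

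With that calibration, your outline has the right global shape --- construct $\zs$ over the family, reduce property (2) to base change at motivic points, and bootstrap property (1) from a single point via congruences --- but two of its load-bearing claims are wrong or insufficient. First, you assert that the Tamagawa Number Conjecture at $p$ for $\Wcal(f)(r)$ ``is known in the required generality by \cite{KatoEuler}'' when $L(f,r)\neq0$. It is not: \cite{KatoEuler} (Theorem~\ref{TheoZetaMorphism} here) gives the existence and interpolation of the zeta morphism and \emph{one divisibility} in the main conjecture, and you yourself recognize this in the next paragraph when you call the reverse divisibility the crux. The TNC at $p$ is precisely the open statement the paper is targeting, and invoking it as a known ingredient for property~(2) is circular. (Property~(2) in fact only requires the interpolation formula, not integrality; the paper obtains it from Theorem~\ref{TheoNakamura} plus perfect-complex base change, cf.\ Theorem~\ref{TheoCompatibility}.) Second, the step ``$\Ds$ and $\Rs$ are invertible $\Rs$-modules and $\Rs$ is reduced, so $\zs$ is an isomorphism onto $\Rs$ as soon as each specialization is one, and such $\la$ are Zariski-dense'' is false as stated and is exactly the pitfall the paper's introduction warns about: when $\Rs$ is not regular, the specialization $\zs\tenseur_{\la}\Ocal$ need not even be defined (the image of $\zs$ may be $\tfrac{x}{y}\Rs$ with $y$ vanishing at $\la$), and points where the predicted equality fails could concentrate in the singular locus. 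The Taylor--Wiles--Kisin patching of Section~3 is not a ``good ring-theoretic control'' add-on; it is the substitute for the invalid density argument --- Lemma~\ref{LemTW} and Proposition~\ref{PropTWgeneral} produce a patched ring whose irreducible components are \emph{regular}, and Lemmas~\ref{LemRemonteBis} and~\ref{LemDescendBis} carry a hypothetical failure of the conjecture up to that regular ring and back down to an Iwasawa-suitable point where Kato's Euler-system divisibility forces a contradiction. Finally, you describe the construction of $\zs$ as an essentially routine assembly of Kato's Beilinson elements across the tower, but the interpolation over the full (non-ordinary) deformation ring is Nakamura's theorem (Theorem~\ref{TheoNakamura}), which requires the $p$-adic local Langlands correspondence of \cite{ColmezFoncteur,PaskunasMontreal}; this is a major input that cannot be absorbed into ``norm compatibility.''
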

Roughly speaking, conjecture \ref{ConjIMCIntro} says that the family of zeta morphisms $\zs$ defined over $\Rs$ interpolates the zeta morphisms $\z(f)$ at all motivic point $f$ of $\Rs$.

In the second statement of conjecture \ref{ConjIMCIntro} is implicit the fact that the natural map
\begin{equation}\label{EqIsomDeltaIntro}
\Ds\tenseur_{\Rs,\la}A\fleche\Delta_{\la}
\end{equation}
is an isomorphism for all $\la\in\Spec^{\mot}\Rs$. Under our hypothesis that $\Si$ contains all primes dividing $Np$, this indeed holds even without the hypothesis that $\la$ is motivic (however, the map \eqref{EqIsomDeltaIntro} need not be an isomorphism in general without this hypothesis on $\Si$, so that \cite[Conjecture 3.2.2]{KatoViaBdR}, which seemingly omits it, appears to be not correct as stated - see for instance the counterexamples in \cite[section 4.2]{FouquetX}).

The reader is advised to keep the following fundamental examples in mind.
\begin{enumerate}[(i)]
\item Suppose $\Rs=\zp$, $\Ts=T$ is a lattice inside $V(f)_{r,p}$ and $\Spec^{\mot}\Rs=\{\Id\}$. Then $\Ts$ is a punctual family. Conjecture \ref{ConjIMCIntro} is then a restatement of conjecture \ref{ConjTNCIntro} at the prime $p$. 
\item Suppose $\Rs$ is the localization of $\Hecke_{p}$ at a maximal prime ideal $\mgot$, $\Ts$ is an $\Rs$-lattice $T$ inside $\Wcal(r)\tenseur\qp$ and $\Spec^{\mot}\Rs$ is the set of system of eigenvalues which factor trough $\Hecke_{\mgot}$. Then $\Ts$ is a finite collection of points. Conjecture \ref{ConjIMCIntro} is then a restatement of the $\mgot$-local part of conjecture \ref{ConjETNCIntro}. 
\item Suppose that $\Rs$ is the Iwasawa algebra $\La_{\Iw}\eqdef\zp[[\Gal(\Q_{\infty}/\Q)]]$ where $\Q_{\infty}/\Q$ is the cyclotomic $\zp$-extension, that $\Ts$ is $T\tenseur_{\zp}\La_{\Iw}$ for $T\subset V(f)_{r,p}$ as above and that $\la$ belongs to $\Spec^{\mot}\Rs$ if and only if there exist $r\in\Z$ and $\chi:\Gal(\Q_{\infty}/\Q)\fleche\C$ such that $L(f,\chi,r)\neq0$ and such that $T_{\la}$ is a lattice inside $V(f)(r)\tenseur\chi$. Then $\Ts$ is the $p$-adic family of Galois representations attached by classical Iwasawa theory. Conjecture \ref{ConjIMCIntro} is then a statement of the Iwasawa Main Conjecture for modular motives (\cite[Conjecture 12.10]{KatoEuler}). In this setting, conjecture \ref{ConjIMCIntro} is frequently known to hold.
\item A refined variation of the previous examples is where $\Rs$ is $\Hecke_{\mgot}[[\Gal(\Q_{\infty}/\Q)]]$ for $\mgot$ a maximal ideal of $\Hecke_{p}$. Then $\Ts$ is a finite collection of classical Iwasawa-theoretic families attached to finitely many eigencuspforms (this is to example (iii) what example (ii) is to example (i)). Despite the superficial similarity, conjecture \ref{ConjIMCIntro} in this setting is much more delicate than in the previous one.
\item Suppose that $\Rs$ is the local Hida-Hecke algebra $\Hecke_{\Si}^{\ord}$ attached to a maximal ideal of the Hida-Hecke algebra, that $\Ts$ is the Galois representation attached to a Hida family of nearly-ordinary eigencuspform and that $\Spec^{\mot}\Rs$ is the set of system of eigenvalues attached to nearly-ordinary eigencuspforms $f$ of weight $k\geq2$ possibly twisted by a character $\chi_{\cyc}^{r}\chi$ with $r\in\Z$ and $\chi:\Gal(\Q_{\infty}/\Q)\fleche\C$ such that $L(f,\chi,r)\neq0$. In that setting, conjecture \ref{ConjIMCIntro} is a formulation of the Iwasawa Main Conjecture in Hida families. When $\Rs$ is a in addition a normal ring, conjecture \ref{ConjIMCIntro} is equivalent to the statement
\begin{equation}\nonumber
\left(\Lcal_{\Si,p}\right)=\carac_{\Rs}\Sel_{\Gr}\left(\Ts\right)
\end{equation}
where $\Lcal_{\Si,p}$ is the $p$-adic $L$-function of \cite{EmertonPollackWeston} and $\Sel_{\Gr}\left(\Ts\right)$ is a suitably defined $\Rs$-torsion Greenberg Selmer module (see for instance \cite{OchiaiMainConjecture,FouquetOchiai}).
\item Finally, the maximally general example is the case where $\Rs$ is the universal deformation ring $\Hsmr$ of a residually modular absolutely irreducible Galois representation $\rhobar$, where $\Ts$ is the universal deformation of $\rhobar$ and where $\Spec^{\mot}\Hsmr$ is the set of system of eigenvalues attached to eigencuspforms $f$ of weight $k\geq2$ possibly twisted by a character $\chi_{\cyc}^{r}\chi$ with $r\in\Z$ and $\chi:\Gal(\Q_{\infty}/\Q)\fleche\C$. It logically follows from statement 3 of conjecture \ref{ConjIMCIntro} for $\Ts$, that this conjecture implies conjecture \ref{ConjIMCIntro} at $p$ for $T_{\la}$ at all motivic points $\la$ of $\Rs$ such that $L(f,\chi,r)\neq0$ (so example (i) for these modular motives). It is true but not obvious that conjecture \ref{ConjIMCIntro} for $\Ts$ implies conjecture \ref{ConjIMCIntro} for $T_{\la}\tenseur\La_{\Iw}$ at all motivic points $\la$ (so example (iii)), conjecture \ref{ConjIMCIntro} with coefficients in $\Hecke_{\mgot}[[\Gal(\Q_{\infty}/\Q)]]$ whenever $\Hecke_{\mgot}$ is a quotient of $\Rs$ as in example (iv) and conjecture \ref{ConjIMCIntro} for a Hida family parametrized by $\Hecke_{\mr}^{\Si,\ord}$ as in example (v) if $\Rs$ admits $\Hecke_{\mr}^{\Si,\ord}$ as a quotient (the difficulty in establishing this statement being that the compatibility at motivic points does not formally entail the compatibility in families). More generally, theorem \ref{TheoCompatibility} below shows that this compatibility holds at the so-called \Iwagood specalization of $\Hsmr$.
\end{enumerate}
\subsection{Statement of results}
Assume that $p\geq3$ is an odd prime and let $k$ be a finite field of characteristic $p$. As above, let $\Sigma$ be a finite set of primes containing $p$ and let $G_{\Q,\Sigma}$ be the Galois group of the maximal extension of $\Q$ unramified outside $\Sigma$. Consider
\begin{equation}\nonumber
\rhobar:G_{\Q,\Sigma}\fleche\GL_{2}(k)
\end{equation}
an odd Galois representation which satisfies the following properties.
\begin{Hyp}\label{HypIntro}
\begin{enumerate}
\item The image of $\rhobar$ contains a subgroup conjugated to $\SL_{2}(\fp)$.
\item\label{ItemrhobarIntroEnP} If $\rhobar|_{G_{\qp}}$ is an extension of characters
\begin{equation}\nonumber
\suiteexacte{}{}{\chi}{\rhobar|_{G_{\qp}}}{\psi},
\end{equation}
then $\chi\psi^{-1}\neq1$ and $\chi\psi^{-1}\neq\bar{\chi}_{\cyc}^{-1}$ (here $\chi_{\cyc}$ is the $p$-adic cyclotomic character).
\end{enumerate}
\end{Hyp}
To $\rhobar$ is attached a universal deformation ring $\Rsr$ and a universal deformation $\Ts$. Define $\Rsr$ to be \emph{minimal} if for all motivic point $\la:\Rsr\fleche\Ocal$ the $G_{\Q,\Sigma}$-representation $\rho_{\la}$ is a \emph{minimal lift} in the sense of definition \cite[Definitions 3.3 and 3.32]{FujiwaraDeformation} (in particular, $\rhobar$ is then ramified at all $\ell\nmid p$ in $\Sigma$).
\begin{Theo}\label{TheoMinIntro}
Assume $\Rsr$ to be minimal. Then the following assertions are equivalent.
\begin{enumerate}
\item There exists a motivic point $\la\in\Spec^{\mot}\Rsr$ such that conjecture \ref{ConjIMCIntro} holds for $T_{\la}\tenseur\La_{\Iw}$.
\item For all motivic point $\la\in\Spec^{\mot}\Rsr$, conjecture \ref{ConjIMCIntro} holds for $T_{\la}\tenseur\La_{\Iw}$ (the classical Iwasawa Main Conjecture is true for the motive $\Wcal(f)(r)$). If in addition $L(f,\chi,r)\neq0$, then conjecture \ref{ConjTNCIntro} for $\Wcal(\la)$ at $p$ holds (the Tamagawa Number Conjecture is true for the motive $\Wcal(f)(r)$ at $p$).
\item The Iwasawa Main Conjecture (conjecture \ref{ConjIMCIntro}) holds for $\Ts$.
\end{enumerate}
\end{Theo}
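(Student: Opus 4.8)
The plan is to establish the implications $(3)\Rightarrow(2)\Rightarrow(1)\Rightarrow(3)$. The implication $(2)\Rightarrow(1)$ is immediate: $\rhobar$ satisfies Hypothesis \ref{HypIntro}, hence is residually modular, so $\Spec^{\mot}\Rsr\neq\varnothing$. For $(3)\Rightarrow(2)$ I would specialize the zeta morphism $\zs:\Ds\tenseur Q(\Rsr)\isom Q(\Rsr)$ attached to $\Ts$ by conjecture \ref{ConjIMCIntro}. Since $\Si$ contains every prime dividing $Np$, the base-change morphism \eqref{EqIsomDeltaIntro} is an isomorphism at every point of $\Spec\Rsr$; in particular, for $\la\in\Spec^{\mot}\Rsr$ attached to $\Wcal(f)(r)$, specializing $\zs$ along the map $\Rsr\fleche\Ocal\tenseur_{\zp}\La_{\Iw}=\Ocal[[\Gal(\Q_{\infty}/\Q)]]$ classifying the cyclotomic deformation $T_{\la}\tenseur\La_{\Iw}$ of $\rho_{\la}$ produces an isomorphism onto $\La_{\Iw}$ — statement 1 of conjecture \ref{ConjIMCIntro} for $T_{\la}\tenseur\La_{\Iw}$ — while statement 2 for $T_{\la}\tenseur\La_{\Iw}$ follows from statement 2 for $\Ts$ by transitivity of specialization; this is exactly theorem \ref{TheoCompatibility}, applicable because the motivic points of a minimal $\Rsr$ are \Iwagood. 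One further specialization along a character $\chi\chi_{\cyc}^{r}$ of $\Gal(\Q_{\infty}/\Q)$, using once more that \eqref{EqIsomDeltaIntro} remains an isomorphism, recovers conjecture \ref{ConjTNCIntro} at $p$ for $\Wcal(\la)$ when $L(f,\chi,r)\neq0$ (example (i)).

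All the substance lies in $(1)\Rightarrow(3)$. Here I would invoke the family zeta morphism built by the main construction of this paper (underlying conjecture \ref{ConjUniv} and theorem \ref{TheoIntro}): interpolating the Beilinson--Flach/Kato Euler system over $\Rsr$ yields $\zs:\Ds\tenseur Q(\Rsr)\isom Q(\Rsr)$ which carries $\Ds$ into $\Rsr$ (integrality of the Euler system classes) and satisfies statements 2 and 3 of conjecture \ref{ConjIMCIntro} for $\Ts$ unconditionally — compatibility with every zeta element $\z(f)$ at the motivic points and with the $p$-adic étale realization being built into the interpolation and the explicit reciprocity laws. So $\zs$ induces an injection of invertible $\Rsr$-modules $\zs:\Ds\fleche\Rsr$ with torsion cokernel, and statement 1, the assertion that it is an isomorphism, is all that remains. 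Since $\Rsr$ is local, $\Ds=\Det^{-1}_{\Rsr}\RGamma_{c}(\Z[1/\Si],\Ts)$ is free of rank one; fixing a basis $\delta$ and writing $q=\zs(\delta)\in\Rsr$, statement 1 for $\Ts$ becomes $q\in\Rsr^{\times}$, i.e. $q\notin\mgot$ since $\Rsr$ is local.

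To close, I would use hypothesis $(1)$: let $\la_{0}\in\Spec^{\mot}\Rsr$ satisfy conjecture \ref{ConjIMCIntro} for $T_{\la_{0}}\tenseur\La_{\Iw}$. Specializing $\zs$ along the map $\Rsr\fleche\Ocal[[\Gal(\Q_{\infty}/\Q)]]$ classifying the cyclotomic deformation of $\rho_{\la_{0}}$ and identifying the source with $\Det^{-1}_{\Ocal[[\Gal(\Q_{\infty}/\Q)]]}\RGamma_{c}(\Z[1/\Si],T_{\la_{0}}\tenseur\La_{\Iw})$ by \eqref{EqIsomDeltaIntro}, one obtains a zeta morphism for $T_{\la_{0}}\tenseur\La_{\Iw}$ satisfying statement 2 of conjecture \ref{ConjIMCIntro} (theorem \ref{TheoCompatibility}); such a morphism is unique, being determined by its specializations at the Zariski-dense set of critical classical points of the domain $\La_{\Iw}$, so it coincides with the one given by $(1)$ and is an isomorphism onto $\Ocal[[\Gal(\Q_{\infty}/\Q)]]$. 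Hence the image of $q$ in $\Ocal[[\Gal(\Q_{\infty}/\Q)]]$ is a unit, and applying the augmentation $\Ocal[[\Gal(\Q_{\infty}/\Q)]]\fleche\Ocal$ (surjective, with kernel in the Jacobson radical) gives $\la_{0}(q)\in\Ocal^{\times}$. Finally, $\Rsr$ being complete local Noetherian and $\Ocal$ a discrete valuation ring, $\la_{0}:\Rsr\fleche\Ocal$ is a local homomorphism — every element of $\mgot$ is topologically nilpotent, hence maps into the maximal ideal of $\Ocal$ — so $\la_{0}(q)\in\Ocal^{\times}$ precludes $q\in\mgot$. Therefore $q\in\Rsr^{\times}$, which is statement 1 of conjecture \ref{ConjIMCIntro} for $\Ts$, and $(3)$ holds.

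The genuine obstacle is not this deduction but its input: the construction of $\zs$ with $\zs(\Ds)\subseteq\Rsr$ and with the compatibility of all its specializations (statements 2 and 3 of conjecture \ref{ConjIMCIntro}, and theorem \ref{TheoCompatibility}). That is where the Euler system machinery, the interpolation of Beilinson--Flach classes over the universal deformation ring, the explicit reciprocity laws, and the minimality of $\Rsr$ — yielding $\Rsr=\mathbf{T}$, the reducedness of $\Rsr$, and the good base-change behaviour of the compactly supported cohomology complexes (with $\Si\supseteq\{\ell\mid Np\}$ forcing \eqref{EqIsomDeltaIntro} to be an isomorphism) — are really needed. Granting that, the propagation from one known cyclotomic specialization to the whole family reduces to the elementary observation that an element of a local ring mapping to a unit under a local homomorphism into a discrete valuation ring is itself a unit; the only care required is the bookkeeping of the determinant identifications \eqref{EqIsomDeltaIntro} and the uniqueness of zeta morphisms over $\La_{\Iw}$.
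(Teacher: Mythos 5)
Your $(3)\Rightarrow(2)\Rightarrow(1)$ chain and the final "propagation" step of $(1)\Rightarrow(3)$ — from a single motivic point $\la_{0}$ where the image of $q=\zs(\delta)$ is a unit, up to $q\in\Rsr^{\times}$ using that $\la_{0}$ is a local homomorphism — match the paper's reasoning once the inclusion $\zs:\Ds(\Ts)^{-1}\hookrightarrow\Rsr$ is in hand. But there is a genuine gap at exactly the place you flag and then wave past: you take that inclusion as an "input," attributing it to "integrality of the Euler system classes," "interpolation of Beilinson–Flach classes," "explicit reciprocity laws," and "minimality of $\Rsr$." None of these gives it. Nakamura's theorem \ref{TheoNakamura} produces a map $\Ts(-1)^{+}\fleche\Hun_{\et}(\Z[1/\Si],\Ts)$ and hence, by proposition \ref{PropTorsion}, an isomorphism $\Ds\tenseur Q(\Rsr)\isom Q(\Rsr)$; it says nothing about the induced trivialization of determinants landing in $\Rsr$. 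That inclusion is precisely the divisibility half of conjecture \ref{ConjUniv}, i.e. it is what the theorem is for, not a given. Moreover, the usual Euler system/Kolyvagin bound that supplies such a divisibility requires the coefficient ring to be regular, and $\Rsr\simeq\Hsmr$ is in general far from regular (its relative dimension over $\zp$ is at least $3$, and it is a Hecke algebra, not a power series ring). Minimality gives $R=\mathbf{T}$ and good base-change for $\RGamma_{c}$, but it does not remove singularities of $\Rsr$, so the Euler system argument cannot be run over $\Rsr$ directly.

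What the paper actually does to obtain the inclusion is the Taylor-Wiles-Kisin patching of section 3, which is entirely absent from your proposal and is the crux. The proof is by contradiction: assuming $\zs$ does \emph{not} carry $\Ds(\Ts)^{-1}$ into $\Hsmr$, lemma \ref{LemRemonteBis} lifts that failure through the framed deformation rings $R^{\square}_{n}$ at Taylor-Wiles levels $Q_{n}$, so that it survives in the patched limit $(R^{\square}_{\infty},\Delta^{\square}_{\infty})$; lemma \ref{LemTW} and proposition \ref{PropTWgeneral} show the irreducible components of $R^{\square}_{\infty}$ are \emph{regular} local rings (because they are power series rings over components of the completed tensor product $B$ of local framed deformation rings, each of which is computed to be regular); lemma \ref{LemDescendBis} then uses that regularity to specialize down to a map $\psi:\Hsmprime\fleche S_{\Iw}$ with $S_{\Iw}$ a two-variable regular ring, $H^{2}(G_{\qp},T_{\psi})$ finite, and the failure still visible; finally Kato's Euler system theorem \cite[Theorem 0.8]{KatoEulerOriginal} applies over $S_{\Iw}$ and contradicts this. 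So the role of the Euler system is confined to that terminal specialization over a regular two-dimensional ring; the heavy lifting — making the singular $\Rsr$ approximable by regular rings in a way compatible with the zeta trivializations — is the patching, which your argument does not supply and cannot be replaced by "Euler systems over $\Rsr$."
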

When $\Rsr$ is not minimal, the statement of the main theorem is more involved, as it requires a technical condition on the supplementary ramification allowed. Here follows a summary (see assumption \ref{HypLocalDef} in section \ref{SubLevels} below for details).

Denote by $\Si(\rhobar)$ the finite set of finite primes at which $\rhobar$ is ramified and let $U^{(p)}\subset\GL_{2}(\A_{\Q}^{(p\infty)})$ be a compact open subgroup maximal outside $\Sigma\supset\Sigma(\rhobar)\cup\{p\}$ a finite set of primes. To this data is attached a local, reduced, $p$-adic Hecke algebra $\Hsmr$, which is the local factor corresponding to $\rhobar$ in the inverse limit of Hecke algebra of tame level $U^{(p)}$ as the level at $p$ goes to infinity generated by operators outside $\Sigma$. In general, $\Hsmr$ depends on $U^{(p)}$ even though this dependence is suppressed in the notation. Let $\Ts$ be the Galois representation with coefficients in $\Hsmr$ deforming $\rhobar$ (under our assumptions, $\Hsmr$ is the universal deformation ring and $\Ts$ is the universal deformation of $\rhobar$ satisfying certain supplementary conditions depending on $U^{(p)}$).

\begin{Theo}\label{TheoIntro}
In addition to assumption \ref{HypIntro}, assume that the compact open subgroup $U^{(p)}$ satisfies the following property: if $\ell\nmid p$ belongs to $\Sigma\backslash\Si(\rhobar)$ then it is odd and the deformation type at $\ell$ satisfies one of the conditions of assupmption \ref{HypLocalDef}. Then the following assertions are equivalent.
\begin{enumerate}
\item There exists a motivic point $\la\in\Spec^{\mot}\Rsr$ such that conjecture \ref{ConjIMCIntro} holds for $T_{\la}\tenseur\La_{\Iw}$.
\item For all motivic point $\la\in\Spec^{\mot}\Rsr$, conjecture \ref{ConjIMCIntro} holds for $T_{\la}\tenseur\La_{\Iw}$ (the classical Iwasawa Main Conjecture is true for the motive $\Wcal(f)(r)$). If in addition $L(f,\chi,r)\neq0$, then conjecture \ref{ConjTNCIntro} for $\Wcal(\la)$ at $p$ holds (the Tamagawa Number Conjecture is true for the motive $\Wcal(f)(r)$ at $p$).
\item The Iwasawa Main Conjecture (conjecture \ref{ConjIMCIntro}) holds for $\Ts$.
\end{enumerate}
\end{Theo}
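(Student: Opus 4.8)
\emph{Plan of proof.} The plan is to prove the cycle of implications $(3)\Rightarrow(2)\Rightarrow(1)\Rightarrow(3)$; here $(2)\Rightarrow(1)$ is immediate, a motivic point existing because $\rhobar$ is residually modular under assumption \ref{HypIntro}. Before anything else I would reduce to the minimal situation of Theorem \ref{TheoMinIntro} by a level-raising comparison. Assumption \ref{HypLocalDef} is designed precisely so that, at each prime $\ell\in\Si\setminus\Si(\rhobar)$, the local Galois deformation ring is well-behaved (formally smooth, or a complete intersection which is moreover a domain) and the level-raising congruences of Ribet--Diamond--Taylor type are available; this lets one relate $\Hsmr$ and $\Ts$ to the data of a minimal deformation problem, up to explicit interpolated local Euler factors at those $\ell$, under which the zeta morphisms correspond. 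One may therefore assume throughout that $\Hsmr$ is a reduced complete intersection (in particular Cohen--Macaulay and equidimensional), equal by modularity lifting to the corresponding Hecke algebra $\Hecke$, and that the completed cohomology it acts on, localised at $\mr$, is free over it.

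For $(3)\Rightarrow(2)$ I would proceed as follows. Assume conjecture \ref{ConjIMCIntro} for $\Ts$ and fix a motivic point $\la$. One first checks that the specialisation of $\Ts$ with $\La_{\Iw}$-coefficients arises from an \Iwagood specialization of $\Hsmr$ in the sense of theorem \ref{TheoCompatibility}; this is where assumption \ref{HypIntro}(\ref{ItemrhobarIntroEnP}) on $\rhobar|_{G_{\qp}}$ is used. Theorem \ref{TheoCompatibility} then transports statement $3$ of conjecture \ref{ConjIMCIntro} for $\Ts$ down to statement $3$ for $T_\la\tenseur\La_{\Iw}$, using that the base-change map \eqref{EqIsomDeltaIntro} is an isomorphism under our hypothesis on $\Si$, and statements $1$ and $2$ for the $\La_{\Iw}$-family follow formally from the interpolation property built into $\zs$ together with transitivity of specialisation. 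This gives the classical Iwasawa Main Conjecture for $\Wcal(f)(r)$. When moreover $L(f,\chi,r)\neq0$, a further descent along the cyclotomic variable yields conjecture \ref{ConjTNCIntro} at $p$ for $\Wcal(\la)$, exactly as in the passage from example (iii) to example (i) discussed after conjecture \ref{ConjIMCIntro}: the non-vanishing of the $L$-value forces the relevant cohomology to be torsion, so that specialisation commutes with the formation of determinants.

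The heart of the matter is $(1)\Rightarrow(3)$. The zeta morphism $\zs\colon\Ds\tenseur Q(\Hsmr)\isom Q(\Hsmr)$ is available over the total ring of quotients: it is the image, under the relevant explicit reciprocity law, of the Beilinson--Flach/Kato Euler system interpolated over the family $\Ts$, and equivalently it packages Kato's zeta isomorphisms at all motivic points; thus the content of statement $1$ of conjecture \ref{ConjIMCIntro} is its integrality, i.e.\ that it restricts to an isomorphism $\Ds\isom\Hsmr$. The Euler system, used with coefficients in $\Hsmr$, bounds the dual Selmer module and hence yields one divisibility, namely $\zs(\Ds)\subseteq\Hsmr$ with the characteristic ideal of the torsion quotient dividing that of the dual Selmer module. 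Hypothesis $(1)$ supplies the reverse divisibility after specialising at one motivic point $\la_0$, where $\zs$ becomes Kato's isomorphism $\Delta_{\la_0}\isom\Ocal[[\Gal(\Q_{\infty}/\Q)]]$. It then remains to propagate this equality from $\la_0$ to all of $\Spec\Hsmr$: since $\Hsmr$ is Cohen--Macaulay and reduced, the defect $\Hsmr/\zs(\Ds)$ is torsion and, being locally a quotient by a nonzerodivisor, pure of codimension one, hence detected at height-one primes; combining the level-raising comparison of the first step, the freeness of the localised completed cohomology, and the congruences linking the motivic points (a numerical-criterion/congruence-module argument à la Wiles), I would show the defect must vanish. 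This gives statement $1$, and statements $2$ and $3$ for $\Ts$ then follow from the interpolation property of $\zs$ together with the base-change isomorphism \eqref{EqIsomDeltaIntro}.

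The step I expect to be the main obstacle is precisely this last propagation: converting a divisibility of characteristic ideals known at a single motivic point into an equality of invertible fractional ideals over the whole deformation ring. It relies on the rigidity of $\Hsmr$ — reducedness and the complete intersection property from Taylor--Wiles patching, and the identification $R=\Hecke$ from modularity lifting — on the freeness of completed cohomology localised at $\mr$, a multiplicity-one statement resting on the big-image hypothesis \ref{HypIntro}(1), and, in the non-minimal case, on assumption \ref{HypLocalDef}, which is what both validates the level-raising reduction to Theorem \ref{TheoMinIntro} and keeps the supplementary local Euler factors under control.
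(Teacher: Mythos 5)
Your high-level plan diverges from the paper's in a way that obscures where the real difficulty sits, and your key step $(1)\Rightarrow(3)$ has a genuine gap. You write that ``the Euler system, used with coefficients in $\Hsmr$, bounds the dual Selmer module and hence yields one divisibility, namely $\zs(\Ds)\subseteq\Hsmr$.'' This is precisely the assertion that cannot be obtained directly: the Euler-system/Kolyvagin-system machinery in Kato's form requires the coefficient ring to be a regular local ring (or at worst a very well-behaved two-dimensional ring), while $\Hsmr$ is merely a complete local reduced $\zp$-algebra, typically non-normal and certainly not regular. Asserting the upper-bound divisibility over $\Hsmr$ as an input is essentially circular; it is what the theorem has to prove. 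The paper's whole Section 3 exists to circumvent this obstruction. Proposition \ref{PropTWgeneral} constructs a Taylor-Wiles-Kisin system whose completed local base $B$ has regular irreducible components under assumption \ref{HypLocalDef}; lemma \ref{LemTW} then shows the patched limit $R^{\square}_{\infty}/\aid_{\infty}$ is isomorphic to a power-series ring over such a component, hence regular. Lemma \ref{LemRemonteBis} propagates a hypothetical failure of the inclusion $\Ds(\Ts)^{-1}\plonge\Hsmr$ up to the patched system, and lemma \ref{LemDescendBis} descends it to a specialization $\psi\colon\Hecke^{\Sigma'}_{\mr}\fleche S_{\Iw}$ with $S$ a discrete valuation ring, i.e.\ to a two-dimensional regular target. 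Only there can Kato's theorem (\cite[Theorem 0.8]{KatoEulerOriginal}) be invoked, yielding a contradiction. Your proposal skips all of this by invoking an Euler-system bound in a generality in which it is not available.

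A second, related mismatch: the paper does not prove the cycle $(3)\Rightarrow(2)\Rightarrow(1)\Rightarrow(3)$. It proves unconditionally, via the patching contradiction just described, that $\zs$ induces an inclusion $\Ds(\Ts)^{-1}\plonge\Hsmr$ (equivalently $\zs:\Ds(\Ts)^{-1}\isom x\Hsmr$ for some $x\in\Hsmr$). Once this single inclusion is in hand, the three assertions become equivalent formally, by the commutative diagram relating $\zs$ to $\z(f)_{\Iw}$ at motivic points (the content of proposition \ref{PropDetComSigma} / theorem \ref{TheoCompatibility}). In that light, your proposed ``propagation from a single motivic point $\la_0$ to all of $\Spec\Hsmr$'' by Cohen--Macaulay/codimension-one and a Wiles-style congruence module argument is not needed as a separate step and, as sketched, is not adequately supported: what must be propagated is not an equality of characteristic ideals but an equality of invertible fractional ideals over a non-normal ring, and it is the unconditional inclusion, not a Wiles numerical criterion, that makes the propagation formal. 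Likewise, your claim to ``reduce to the minimal situation of Theorem \ref{TheoMinIntro}'' overstates what assumption \ref{HypLocalDef} delivers; in the paper it guarantees regularity of the irreducible components of the local framed deformation rings at primes of $\Sigma\setminus\Sigma(\rhobar)$, which feeds into proposition \ref{PropTWgeneral}, rather than effecting a reduction to the minimal deformation problem (the minimal-case Taylor--Wiles system enters only as an auxiliary input to establish freeness of the Hecke module in that proposition). Your treatment of $(3)\Rightarrow(2)$ via theorem \ref{TheoCompatibility} and the base-change isomorphism \eqref{EqIsomDeltaIntro} is, by contrast, consistent with the paper's argument.
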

%
%
As mentioned above, the classical Iwasawa Main Conjecture (conjecture \ref{ConjIMCIntro}) is frequently known to hold for $T_{\la}\tenseur\La_{\Iw}$. Combining these results with theorem \ref{TheoIntro} yields the following theorem.
\begin{Cor}\label{CorIntro}
Assume that all the assumptions of theorem \ref{TheoIntro} hold. Assume in addition that there exists an odd prime $\ell\nmid p$ satisfying the the following properties.
\begin{enumerate}
\item Let $\mu:G_{\ql}\fleche\{\pm1\}$ be the non-trivial unramified quadratic character. Then $\rhobar|G_{\ql}$ is up to twist by a power of the cyclotomic character a ramified extension 
\begin{equation}\nonumber
\suiteexacte{}{}{\mu\chi_{\cyc}^{}}{\rhobar|G_{\ql}}{\mu}.
\end{equation}
\item There exists $\lambda\in\Spec^{\mot}\Rs$ attached to an eigencuspform $f\in S_{k}(\Gamma_{0}(N))$ with $\ell||N$. 
\end{enumerate}
Then the Iwasawa Main Conjecture (conjecture \ref{ConjIMCIntro}) holds for $\Ts$.
\end{Cor}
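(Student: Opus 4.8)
The plan is to reduce the statement to the first assertion of Theorem~\ref{TheoIntro}. Concretely, I would exhibit a single motivic point $\la\in\Spec^{\mot}\Rsr$ for which Conjecture~\ref{ConjIMCIntro} holds for $T_{\la}\tenseur\La_{\Iw}$ --- equivalently, for which the classical cyclotomic Iwasawa Main Conjecture holds for the modular motive $\Wcal(f)(r)$ attached to $\la$, as in example~(iii) of the introduction --- and then invoke the equivalence (1) $\Leftrightarrow$ (3) of Theorem~\ref{TheoIntro} to conclude that the Iwasawa Main Conjecture holds for the universal deformation $\Ts$. The passage from one motivic point to the whole family $\Ts$ is exactly the content of Theorem~\ref{TheoIntro} and requires no new input, so all the work is concentrated in producing the right motivic point.

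For that point I would take the $\la$ provided by hypothesis~(2), attached to an eigencuspform $f\in S_{k}(\Gamma_{0}(N))$ with $\ell||N$. Since $f$ has trivial nebentypus and $\ell$ exactly divides $N$, the local representation $\pi_{\ell}$ is special, so $\rho_{f}|_{G_{\ql}}$ is an extension of an unramified character $\eta$ by $\eta\chi_{\cyc}$ with $\eta^{2}$ unramified. Passing to the residual representation and comparing with hypothesis~(1) then forces $\eta$ to reduce to $\mu$ and forces the extension to stay ramified, that is $\rhobar|_{I_{\ql}}$ is a non-trivial unipotent. In other words, at the auxiliary prime $\ell$ the residual representation is Steinberg up to the unramified quadratic twist $\mu$, which is exactly the shape of auxiliary prime demanded by the currently available proofs of the Main Conjecture; the twist by a power of the cyclotomic character allowed in hypothesis~(1) is harmless, as $\chi_{\cyc}$ is unramified at $\ell$ and so does not change the restriction to $I_{\ql}$.

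It then remains to prove the classical cyclotomic Iwasawa Main Conjecture for this $f$, i.e.\ Conjecture~\ref{ConjIMCIntro} for $T_{\la}\tenseur\La_{\Iw}$, and I would obtain it by combining the two known divisibilities. One inclusion between the characteristic ideal of the relevant Iwasawa (Selmer) module and the $p$-adic $L$-function ideal is furnished by Kato's Euler system \cite{KatoEuler,KatoHodgeIwasawa}, whose hypotheses reduce, for a point of $\Spec^{\mot}\Rsr$, to the surjectivity of $\rhobar$ onto a subgroup containing $\SL_{2}(\fp)$, which is precisely part~1 of Assumption~\ref{HypIntro}. The reverse inclusion is where hypotheses~(1)--(2) are genuinely needed: it is supplied by \cite{SkinnerUrban} in the nearly ordinary case (when $\rhobar|_{G_{\qp}}$ is reducible, part~2 of Assumption~\ref{HypIntro} then providing the $p$-distinguished condition these results require) and by \cite{FouquetWan} in the remaining non-ordinary case, the residually multiplicative prime required by both being the prime $\ell$ of hypothesis~(1); the residual big-image, irreducibility and allowed-ramification hypotheses of these theorems follow from Assumption~\ref{HypIntro} together with the local conditions on $U^{(p)}$ carried over from Theorem~\ref{TheoIntro}. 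Putting the two divisibilities together gives Conjecture~\ref{ConjIMCIntro} for $T_{\la}\tenseur\La_{\Iw}$, hence assertion~(1) and therefore assertion~(3) of Theorem~\ref{TheoIntro}, which is the assertion of the corollary.

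The hard part will not be the reduction to Theorem~\ref{TheoIntro}, which is formal, but rather the verification in the previous paragraph that the hypotheses of \cite{SkinnerUrban} and of \cite{FouquetWan} are met \emph{verbatim} by $\rhobar$ and by $\Ts$: identifying their precise ``residually Steinberg'' condition with the twisted ramified extension of hypothesis~(1), checking the $p$-distinguished condition, reconciling their local conditions at the primes of $\Sigma\setminus\Si(\rhobar)$ with the deformation types allowed in Theorem~\ref{TheoIntro}, and ensuring the availability of whatever $L$-value non-vanishing those theorems invoke or that is needed for the interpolation statement in part~2 of Conjecture~\ref{ConjIMCIntro}. Once that bookkeeping is in place, the corollary follows at once.
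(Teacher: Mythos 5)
Your proposal is correct and reproduces the argument the paper has in mind: the paper offers no separate proof of Corollary \ref{CorIntro} beyond the remark that the hypotheses are exactly tailored so the classical cyclotomic Iwasawa Main Conjecture is known for the motivic point of hypothesis~(2) — via Kato's divisibility for one direction and Skinner--Urban/Fouquet--Wan for the other, the twisted Steinberg shape at $\ell$ guaranteed by hypothesis~(1) supplying the auxiliary ``residually multiplicative'' prime they require — after which the implication $(1)\Rightarrow(3)$ of Theorem~\ref{TheoIntro} gives the universal Main Conjecture for $\Ts$. Your unpacking of the local representation-theoretic bookkeeping at $\ell$ and of the role of Assumption~\ref{HypIntro} ($p$-distinguishedness, big image) matches the intended reading.
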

\subsection{Outline of the proof}The proofs of the previous statements rely on the combination of the following two general principles. First, conjectures on special values of $L$-functions should be formulated in terms of zeta morphisms between fundamental lines and compatibilities between these morphisms should be systematically explored. This principle suggests two strategies which are unfortunately in tension: on the one hand, zeta morphisms may be promoted to Euler systems and thus yield strong divisibility results over regular rings; on the other hand, we want to study zeta morphisms with coefficients in the largest possible generality, so with coefficients in large $p$-adic Hecke algebras, but Hecke algebras are not regular rings. This brings to our second main idea. As was first observed by K. Rubin, the method of Euler systems shows by descent that when a motive varies within the smooth locus of a certain deformation space of Galois representations, the deviation from the predictions of the conjectures on special values of the $L$-function of that motive must remain bounded. When the smooth locus is not the whole deformation space, this provides very little information as counterexamples to the conjectures, which are concentrated on exceptional points of the deformation space for elementary reasons, could very well lurk in the non-smooth locus. However, this observation does suggest trying to resolve singularities in a process compatible with zeta morphisms. In this manuscript, this is achieved by using Taylor-Wiles-Kisin systems, and what is precisely shown is that any point in the non-smooth locus of global deformation ring is sufficiently close $\mgot$-adically to a point on a regular ring appearing in the theory of Taylor-Wiles-Kisin systems for an ersatz of the descent procedure to be carried over (see lemma \ref{LemTW} and proposition \ref{PropTWgeneral} for the construction of the regular ring which appears and lemma \ref{LemDescendBis} for the approximate descent procedure). Along the way, strengthening of the outcome of the method of Euler systems and of results on congruences in Iwasawa theory are established (corollaries \ref{CorKatoStronger} and \ref{CorEPW}).

As mentioned after the statement of conjecture \ref{ConjETNCIntro}, the Equivariant Tamagawa Number Conjectures with coefficients in the Hecke algebra $\Hecke_{\mgot}$ (conjectures \ref{ConjETNCIntro} and \ref{ConjETNCIntroBis}) and the Iwasawa Main Conjecture with coefficients in the Hecke algebra $\Hsmr$  (conjecture \ref{ConjIMCIntro}) are logically much stronger statements even than the collection of all Tamagawa Number Conjectures for each motive $\Wcal(\la)$ attached to a motivic point of $\Hecke_{\mgot}$ or $\Hsmr$. The ETNC with coefficients in $\Hsmr$ expresses the special values of the $L$-functions of all eigencuspforms attached to classical primes of $\Hsm$ in terms of a single zeta element $\zs$ and of a single fundamental line $\Delta_{\Si}(\Ts)$. Because $\Hsmr$ is under the hypotheses of theorem \ref{TheoIntro} the universal deformation ring of $\rhobar$, these are exactly the eigencuspforms with residual representation isomorphic to $\rhobar$ satisfying certain ramification conditions. The Iwasawa Main Conjecture with coefficients in the Hecke algebra thus captures congruences between special values of $L$-functions of congruent modular forms. In particular, theorem \ref{TheoIntro} and corollary \ref{CorIntro} settle in the affirmative the question asked at the end of the introduction of \cite{MazurValues}, whereas the truth of the classical Iwasawa main conjecture for a pair of congruent eigencuspforms or even the truth of ETNC with coefficients in Hecke algebras for such a pair would not \textit{a priori} have any bearing on this question as far as this author can tell. Reversing the perspective as in \cite{KatoICM}, theorem \ref{TheoIntro} also implies that the structure as $\Hsm$-module of the Galois cohomology of $M(f)_{\et,p}$ is encoded in the special values of the $L$-functions of forms congruent to $f$ (see section \ref{SubExample}).

To illustrate with a concrete example, consider $f$ the eigencuspform attached to the elliptic curve with good supersingular reduction at $p=3$ with $a_{p}=0$ of section \ref{SubExample}. It is easy to check that the Iwasawa Main Conjecture at $p=3$ is true for $T_{p}E$. This can be done for instance by computing the Iwasawa invariants of the $p$-adic $L$-function of \cite{PollackSupersingular} or (as is actually done below) by computing the Kolyvagin class attached to a suitable choice of Kolyvagin primes. In fact, the complex $\RGamma_{\et}(\Z[1/p],T(f)_{\Iw})$ has the simplest cohomology possible: the cohomology of this complex is concentrated in degree 1 and $\Hun_{\et}(\Z[1/p],T(f)_{\Iw})$ is a free $\La_{\Iw}$-module of rank 1 generated by Kato's class $\z(f)_{\Iw}$. There is a unique eigencuspform $g\in S_{4}(\Gamma_{0}(1640))$ with finite non-zero slope at the single prime above $3$ for which it is congruent to $f$. As a consequence, the $p$-adic $L$-function of $g$  (\cite{ManinPadic,VisikPadic,MazurTateTeitelbaum}) is not a power-series so it is computationally hard to extract from it refined information about the Galois cohomology of $M(g)_{\et,3}$. Nevertheless, using the minimal information that $g$ is congruent to $f$, theorem \ref{TheoIntro} and corollary \ref{CorIntro} imply first that the Iwasawa Main Conjecture is true for $T(g)_{\Iw}$ then yields an explicit description of $\RGamma_{\et}(\Z[1/p],T(g)_{\Iw})$: $\Hun_{\et}(\Z[1/p],T(g)_{\Iw})$ is a free $\Ocal_{\Iw}$-module of rank 1 which is \emph{not} generated by Kato's class $\z(g)_{\Iw}$ and $H^{2}_{\et}(\Z[1/p],T(g)_{\Iw})$ is a cyclic torsion $\Oiwa$-module isomorphic to $\Oiwa/(\omega)$ for $\omega$ a certain explicit Euler factor. As far as this author can see, the knowledge of the classical Iwasawa Main Conjecture (in any form) for one element of the pair or even for both would allow no such prediction.

\paragraph{Notations}All rings are assumed to be commutative (and unital). The total quotient ring of a reduced ring $R$ is denoted by $Q(R)$ and the fraction field of a domain $A$ is denoted by $\Frac(A)$. If $R$ is a domain with field of fraction $K$, if $j$ denotes the morphism $j:\Spec K\fleche\Spec R$ and if $\Fcali$ is a sheaf on the étale site of $\Spec K$, we denote by $\RGamma_{\et}(R,\Fcali)$ the étale cohomology complex $\RGamma_{\et}(\Spec R,j_{*}\Fcali)$.

If $F$ is a field, we denote by $G_{F}$ the Galois group of a separable closure of $F$. If $F$ is a number field and $\Sigma$ is a finite set of places of $F$, we denote by $F_{\Sigma}$ the maximal Galois extension of $F$ unramified outside $\Sigma\cup\{v|\infty\}$ and by $G_{F,\Sigma}$ the Galois group $\Gal(F_{\Sigma}/F)$. The ring of integer of $F$ is written $\Ocal_{F}$. If $v$ is a finite place of $F$, then $\Ocal_{F,v}$ is the unit ball of $F_{v}$. The reciprocity law of local class field theory is normalized so that the uniformizing parameter is sent to (a choice of lift of) the geometric Frobenius morphism $\Fr(v)$. For all rational primes $\ell$, we fix an algebraic closure $\Qbar_{\ell}$ of $\Q_{\ell}$, an embedding of $\Qbar$ into $\Qbar_{\ell}$ and an identification $\iota_{\infty,\ell}:\C\simeq\Qbar_{\ell}$ extending $\Qbar\plonge\Qbar_{\ell}$.

The non-trivial element of $\Gal(\C/\R)$ is denoted by $\tau$. If $R$ is a ring in which $2$ is a unit and $M$ is an $R[\Gal(\C/\R)]$-module (resp. $m$ is an element of $M$), then $M^{\pm}$ (resp. $m^{\pm}$) denotes the eigenspace on which $\tau$ acts as $\pm 1$ (resp. the projection of $m$ to the $\pm$-eigenspace).

The $p$-adic cyclotomic character of $G_{\Q}$ is denoted by $\chi_{\cyc}$. The field $\Q_{\infty}/\Q$ is the cyclotomic $\zp$-extension of $\Q$, that is to say the only Galois extension of $\Q$ with Galois group $\Gamma$ isomorphic to $\zp$. For $n\in\N$, the number field $\Q_{n}$ is the sub-extension of $\Q_{\infty}$ with Galois group $\Z/p^{n}\Z$. If $S$ is a reduced $\zp$-algebra, we write $S_{\Iw}$ for the completed group-algebra $S[[\Gamma]]$. To stick to usual notations, the 2-dimensional regular local ring $\Z_{p,\Iw}=\zp[[\Gamma]]\simeq\zp[[X]]$ is denoted by $\Lambda_{\Iw}$.

\section{The ETNC for modular motives}
\subsection{Modular motives}
As in the introduction, we consider an eigencsupform
\begin{equation}\nonumber
f(z)=\somme{n=1}{\infty}a_{n}(f)q^{n}\in S_{k}(\Gamma_{1}(N),\epsi)
\end{equation}
of weight $k\geq2$, level $\Gamma_{1}(N)$ and nebentypus $\epsi$ with coefficients in a number field $F\subset\C$.

To $f$ and $r\in\Z$ is attached a Grothendieck motive $\Wcal(f)(r)$ pure of weight $k-1$ over $\Q$ and with coefficients in $F$ (\cite{SchollMotivesModular}). We fix $\pid|p$ a prime ideal of $\Ocal_{F}$ over $p$ and write $E$ for $F_{\pid}$ and $\Ocal$ for the unit ball of $E$. For $*\in\{\C,\dR,\pid\}$, We write $V(r)_{*}$ for the corresponding Betti, de Rham or $\pid$-adic étale realization of $\Wcal(f)(r)$. We recall that $V(r)_{\C}$ and $V(r)_{\pid}$ are endowed with an action of $\Gal(\C/\R)$ and that Artin comparison theorem 
\begin{equation}\nonumber
V(r)_{\C}\tenseur_{F}E\isocan V(r)_{\pid}
\end{equation}
is $\Gal(\C/\R)$-equivariant for this action. We denote by $(-)^{+}$ the functor
\begin{equation}\nonumber
 H^{0}\left(\Gal(\C/\R),-\right).
\end{equation}
Let $\zeta_{p^{n+1}}$ be a primitive root of unity of order $p^{n+1}$ and let $\Q_{n}/\Q$ be the subfield of $\Q(\zeta_{p^{n+1}})$ satisfying
\begin{equation}\nonumber
G_{n}\eqdef\Gal(\Q_{n}/\Q)\simeq\Z/p^{n}\Z.
\end{equation}
If $\chi\in\hat{G}_{n}$ is a character of $G_{n}$, we denote by $F_{\chi}$ the sub-extension of $\C$ generated by $F$ and the image of $\chi$. We write $E_{\chi}$ and $\Ocal_{\chi}$ for a corresponding choice of a finite extension of $E$ and, in a slight abuse of notation, we still denote by $\pid\in\Spec\Ocal_{\chi}$ a prime ideal above $p$. We consider $h^{0}(\Spec\Q_{n})$ the Artin motive attached to the regular representation of $G_{n}$ viewed as a pure motive over $\Q$ with coefficients in $F_{\chi}$. Let $h^{0}(\Spec\Q_{n})_{\chi}$ be the direct summand of $h^{0}(\Spec\Q_{n})$ on which $G_{n}$ acts through $\chi$. Let $\Wcal(f\tenseur\chi)(r)$ be the pure Grothendieck motive $\Wcal(f)(r)\times h^{0}(\Spec\Q_{n})_{\chi}$ with coefficients in $F_{\chi}$. We write $V(r)_{\chi,*}$ for the corresponding $*$-realization.

For $S$ a set of finite primes containing\footnote{Contrary to what is generally the case in this manuscript, it is not necessary here to assume that $S$ also contains the primes dividing the level $N$.} $\{p\}$, the $S$-partial $L$-function $L_{S}(f^{*},\chi,s)$ is the holomorphic complex function satisfying 
\begin{equation}\nonumber
L_{S}(f^{*},\chi,s)\eqdef\produit{\ell\notin S}{}\frac{1}{1-\overline{a_{\ell}}\chi(\Fr(\ell))\ell^{-s}+\overline{\epsi(\ell)}\chi(\Fr(\ell))\ell^{k-1-2s}}
\end{equation}
for all $s\in\C$ with $\Re s>>0$ (here $\overline{\cdot}$ denotes complex conjugation).

The Betti-de Rham comparison isomorphism induces a complex period map
\begin{equation}\label{EqPerC}
\per_{\C}:\Fil^{0}V_{\chi,\dR}(r)\tenseur_{F_{\chi}}\C\fleche V_{\chi,\C}(r-1)^{+}\tenseur_{F_{\chi}}\C.
\end{equation}
which is an isomorphism if $1\leq r\leq k-1$ (\cite{DeligneFonctionsL}). Suppose $1\leq r\leq k-1$. The composition of localization at $p$ with the dual exponential map $\exp^{*}$ of \cite{BlochKato}
\begin{equation}\nonumber
\exp^{*}:\Hun(G_{\qp(\zeta_{p^{n}})},V(r)_{\pid})\fleche D_{\dR}^{0}(V(r)_{\pid})
\end{equation}
induces an inverse $p$-adic period map of $E_{\chi}$-vector spaces
\begin{equation}\label{EqPerP}
\per^{-1}_{p}:\Hun_{\et}(\Z[1/S],V(r)_{\chi,\pid})\fleche\Fil^{0}V(r)_{\chi,\dR}\tenseur_{F}E_{\chi}
\end{equation}
which is equivariant under the action of $G_{n}$ on both sides.

According to \cite{Rohrlich} and \cite[Theorems 12.4 and 12.5]{KatoEuler}, for all $n\in\N$ and all $\chi\in\hat{G}_{n}$ except possibly finitely many, the $S$-partial $L$-function $L_{S}(f,\chi,s)$ does not vanish at $r$, $\Hun_{\et}(\Z[1/S],V(r)_{\chi,\pid})$ is an $E_{\chi}$-vector space of dimension 1 and \eqref{EqPerP} is an isomorphism. When this holds, the motive $\Wcal(f\tenseur{\chi})(r)$ is said to be \emph{strictly critical} (implicitly, at $p$) (\cite[Section 3.2.6]{KatoViaBdR}).

\subsection{The Tamagawa Number Conjectures}
\subsubsection{The motivic fundamental line}
Suppose that $\Wcal(f\tenseur{\chi})(r)$ is strictly critical. The determinant functor applied to the $p$-adic period map \eqref{EqPerP} then yields an isomorphism of free $E_{\chi}$-vector spaces of rank 1
\begin{equation}\nonumber
\xymatrix{
\Det_{E_{\chi}}\Hun_{\et}(\Z[1/S],V(r)_{\chi,\pid})\ar[d]_{\isocan}^{\per^{-1}_{p}}\\
\Det_{E_{\chi}}\Fil^{0}V(r)_{\chi,\dR}\tenseur_{F}E_{\chi}.
}
\end{equation}
Recall that ${(-)}^{+}$ sends a module to its invariant submodule under $\Gal(\C/\R)$. Taking tensor product with the determinant of $V(r-1)_{\chi,\pid}^{+}$ yields an identification  
\begin{equation}\label{EqPerDetP}
\xymatrix{
\Det_{E_{\chi}}\Hun_{\et}(\Z[1/S],V(r)_{\chi,\pid})\tenseur_{E_{\chi}}\Det^{-1}_{E_{\chi}}V(r-1)_{\chi,\pid}^{+}\ar[d]_{\isocan}^{\per^{-1}_{p}}\\
\Det_{E_{\chi}}\left(\Fil^{0}V(r)_{\chi,\dR}\tenseur_{F}E_{\chi}\right)\tenseur_{E_{\chi}}\Det^{-1}_{E_{\chi}}V(r-1)_{\chi,\pid}^{+}.
}
\end{equation}
Similarly, the determinant functor applied to the complex period map \eqref{EqPerC} induces an identification
\begin{equation}\label{EqPerDetC}
\xymatrix{
\left[\Det^{}_{\C}\Fil^{0}V(r)_{\chi,\dR}\tenseur_{F_{\chi}}\C\right]\tenseur_{\C}\left[\Det^{-1}_{\C}V(r-1)_{\chi,\C}^{+}\tenseur_{F_{\chi}}\C\right]\ar[d]_{\isocan}^{\per_{\C}}\\
\C.
}
\end{equation}  
\begin{Def}
The \emph{motivic fundamental line} $\left(\Delta_{F}(\Wcal(f\tenseur{\chi})(r)),\per_{p},\per_{\C}\right)$ of the strictly critical motive $\Wcal(f\tenseur{\chi})(r)$ is the one-dimensional $F_{\chi}$-vector space
\begin{equation}\label{EqQsub}
\Delta_{F}(\Wcal(f\tenseur{\chi})(r))\eqdef\Det_{F_{\chi}}\Fil^{0}V(r)_{\chi,\dR}\tenseur_{F_{\chi}}\Det^{-1}_{F_{\chi}}V(r-1)_{\chi,\C}^{+}
\end{equation}
together with the two isomorphisms 
\begin{equation}\nonumber
\per_{p}:\Delta_{F}(\Wcal(f\tenseur{\chi})(r))\tenseur_{F_{\chi}}E_{\chi}\isom\Det_{E_{\chi}}\Hun_{\et}(\Z[1/S],V(r)_{\chi,\pid})\tenseur_{E_{\chi}}\Det^{-1}_{E_{\chi}}V(r-1)_{\chi,\pid}^{+}
\end{equation}
\begin{equation}\nonumber
\per_{\C}:\Delta_{F}(\Wcal(f\tenseur{\chi})(r))\tenseur_{F_{\chi}}\C\isom\C.
\end{equation}
\end{Def}
Though this is not apparent in the notations, the isomorphism $\per_{p}$ and hence the motivic fundamental line do depend on the choice of $S$. The motivic fundamental line of $\Wcal(f\tenseur\chi)(r)$ is an $F_{\chi}$-rational subspace both of the target of \eqref{EqPerDetP} and of the source of \eqref{EqPerDetC}. Suppose 
\begin{equation}\label{EqExampleZeta}
\z:V(r-1)_{\chi,\pid}^{+}\fleche\Hun_{\et}(\Z[1/S],V(r)_{\chi,\pid})
\end{equation}
is a morphism. Taking determinants on both sides, $\z$ can be viewed as an element of the source of \eqref{EqPerDetP}. If furthermore its image through $\per^{-1}_{p}$ lands in the motivic fundamental line $\Delta_{F}(\Wcal(f\tenseur{\chi})(r))$, then a complex number $\per_{\C}(\per^{-1}_{p}(\z)\tenseur1)$ is attached. Let $\per_{\C}(\per^{-1}_{p}(-)\tenseur1)$ be the map defined by this composition on the sub-$F_{\chi}$-subspace $\Delta_{F}(\Wcal(f\tenseur{\chi})(r))_{\pid}$ equal to the image of $\Delta_{F}(\Wcal(f\tenseur{\chi})(r))$ through $\per_{p}$. 
\subsubsection{Zeta morphisms}
The motive $\Wcal(f\tenseur\chi)(r)$ is still assumed to be strictly critical.

As in \eqref{EqExampleZeta} above, $\z$ is an isomorphism
\begin{equation}\nonumber
\z:V(r-1)_{\chi,\pid}^{+}\isom\Hun_{\et}(\Z[1/S],V(r)_{\chi,\pid})
\end{equation}
between the 1-dimensional $E_{\chi}$-vector spaces $V(r-1)_{\chi,\pid}^{+}$ and $\Hun_{\et}(\Z[1/S],V(r)_{\chi,\pid})$. Applying the functor $\Det$ then yields an identification
\begin{equation}\nonumber
\Det_{E_{\chi}}(\z):\Det_{E_{\chi}}\Hun_{\et}(\Z[1/S],V(r)_{\chi,\pid})\tenseur_{E_{\chi}}\Det^{-1}_{E_{\chi}}V(r-1)_{\chi,\pid}^{+}\simeq E_{\chi}.
\end{equation}
Let $\Delta_{F,\z}(\Wcal(f\tenseur{\chi})(r))$ be the $F_{\chi}$-vector space pre-image of $F_{\chi}\subset E_{\chi}$ through this isomorphism and let $\Delta_{F}(\Wcal(f\tenseur{\chi})(r))_{\pid}$ be the image of $\Delta_{F}(\Wcal(f\tenseur{\chi})(r))$ through $\per_{p}$
\begin{Def}
A morphism 
\begin{equation}\nonumber
\z:V(r-1)_{\chi,\pid}^{+}\isom\Hun_{\et}(\Z[1/S],V(r)_{\chi,\pid})
\end{equation}
is \emph{motivic} if $\Delta_{F,\z}(\Wcal(f\tenseur{\chi})(r))$ is equal to $\Delta_{F}(\Wcal(f\tenseur{\chi})(r))_{\pid}$. If a morphism $\z$ is motivic, we say it is the \emph{$S$-partial zeta morphism} of $\Wcal(f\tenseur{\chi})(r)$ if
\begin{equation}\label{EqZetaPeriodComplex}
\per_{\C}\left(\per_{p}^{-1}\left(\Det_{E_{\chi}}(\z)^{-1}(1)\right)\tenseur1\right)=L_{S}(f^{*},\chi^{-1},1-r)\in\C.
\end{equation}
\end{Def}
Note that the composition $\per_{\C}\left(\per_{p}^{-1}\left(\Det_{E_{\chi}}(\z)^{-1}(1)\right)\tenseur1\right)$ makes sense as the pre-image of $\Det_{E_{\chi}}(Z)^{-1}(1)$ through $\per_{p}$ belongs to $\Delta_{F}(\Wcal(f\tenseur{\chi})(r))$ if (and only if) $\z$ is motivic. An $S$-partial zeta morphism is uniquely determined by \eqref{EqZetaPeriodComplex}, and hence unique if it exists.

Taking the determinant functor of a zeta morphism induces a trivialization
\begin{equation}\nonumber
\z:\Det^{-1}_{E_{\chi}}\RGamma_{\et}\left(\Z[1/S],V(r)_{\chi,\pid}\right)\tenseur_{E_{\chi}}\Det^{-1}_{E_{\chi}}V(r-1)_{\chi,\pid}^{+}\isom E_{\chi}
\end{equation}
\begin{DefEnglish}
The \emph{$p$-adic fundamental line} $\Delta_{S,\pid}(f\tenseur\chi)(r)$ is the one-dimensional $E_{\chi}$-vector space
\begin{equation}\nonumber
\Delta_{S,\pid}(f\tenseur\chi)(r)\eqdef\Det^{-1}_{E_{\chi}}\RGamma_{\et}\left(\Z[1/S],V(r)_{\chi,\pid}\right)\tenseur_{E_{\chi}}\Det^{-1}_{E_{\chi}}V(r-1)_{\chi,\pid}^{+}
\end{equation}
together with the free of rank one sub-$\Ocal_{\chi}$-module
\begin{equation}\nonumber
\Delta_{S,\pid}(T)\eqdef\Det^{-1}_{\Ocal_{\chi}}\RGamma_{\et}\left(\Z[1/S],T\right)\tenseur_{\Ocal_{\chi}}\Det^{-1}_{\Ocal_{\chi}}T(-1)^{+}
\end{equation}
obtained by choosing a $G_{\Q,S}$-stable $\Ocal_{\chi}$-lattice $T$ inside $V(r)_{\chi,\pid}$.
\end{DefEnglish}
It is well-known and easy to prove that $\Delta_{S,\pid}(T)$ does not depend on the choice of the $G_{\Q,S}$-stable $\Ocal_{\chi}$-lattice $T$ (see \cite{KatoHodgeIwasawa,BurnsFlachMotivic}).
\begin{Conj}[The Tamagawa Number Conjectures, \cite{BlochKato,KatoHodgeIwasawa}]\label{ConjTNC}
The $S$-partial zeta morphism $\z_{S}(f\tenseur\chi)(r)$ of $\Wcal(f\tenseur{\chi})(r)$ exists and
\begin{equation}\nonumber
\z_{S}(f\tenseur\chi)(r):\Delta_{S,\pid}(f\tenseur\chi)(r)\isom E_{\chi}
\end{equation}
induces an isomorphism
\begin{equation}\nonumber
\z_{S}(f\tenseur\chi)(r):\Delta_{S,\pid}(T)\isom \Ocal_{\chi}.
\end{equation}
\end{Conj}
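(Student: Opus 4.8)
The statement has two parts: the existence of the $S$-partial zeta morphism $\z_{S}(f\tenseur\chi)(r)$, and the integrality assertion that this morphism identifies the lattice $\Delta_{S,\pid}(T)$ with $\Ocal_{\chi}$. The plan is to deduce the first, as in the cyclotomic case treated in \cite{KatoEuler}, from Kato's Euler system of Beilinson--Kato elements together with his explicit reciprocity law, and to deduce the second from the same Euler system (for one divisibility) combined with an Iwasawa Main Conjecture (for the reverse divisibility); only this last input is genuinely deep, the rest being bookkeeping with the determinant functor.

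\emph{Existence.} By definition of strict criticality we have $1\leq r\leq k-1$, the $L$-value $L_{S}(f,\chi,r)$ is non-zero, $\Hun_{\et}(\Z[1/S],V(r)_{\chi,\pid})$ is one-dimensional over $E_{\chi}$ and \eqref{EqPerP} is an isomorphism (as recalled above after \cite{Rohrlich} and \cite[Theorems 12.4 and 12.5]{KatoEuler}). Kato's zeta element $\zka(f\tenseur\chi)(r)\in\Hun_{\et}(\Z[1/S],V(r)_{\chi,\pid})$ is then non-zero, hence a basis. By Kato's explicit reciprocity law (\cite[Theorem 12.5]{KatoEuler}) its image under $\exp^{*}$, and therefore under $\per_{p}^{-1}$, lies in $\Fil^{0}V(r)_{\chi,\dR}\tenseur_{F}F_{\chi}$, i.e.\ in the image of the motivic fundamental line $\Delta_{F}(\Wcal(f\tenseur\chi)(r))$ under $\per_{p}$, while its image under $\per_{\C}$ is $L_{S}(f^{*},\chi^{-1},1-r)$ after unwinding the normalisations built into the fundamental line. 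Hence the morphism $\z$ sending a basis of $V(r-1)^{+}_{\chi,\pid}$ to $\zka(f\tenseur\chi)(r)$ is motivic and satisfies \eqref{EqZetaPeriodComplex}; as noted after that display, such a morphism is then unique, which proves the first assertion.

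\emph{Reduction of integrality to a main conjecture.} The complex $\RGamma_{\et}(\Z[1/S],T)$ has cohomology in degrees $1$ and $2$ only, with $\Hun$ finitely generated of rank $1$ by the previous step and $H^{2}$ finite. The asserted isomorphism $\z_{S}(f\tenseur\chi)(r):\Delta_{S,\pid}(T)\isom\Ocal_{\chi}$ is therefore equivalent, up to factors at the primes of $S$ that are units because $S\supseteq\{\ell\mid Np\}$, to the equality of $\Ocal_{\chi}$-ideals $[\Hun_{\et}(\Z[1/S],T):\Ocal_{\chi}\cdot\zka(f\tenseur\chi)(r)]=\#H^{2}_{\et}(\Z[1/S],T)$. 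The divisibility in which $\#H^{2}_{\et}(\Z[1/S],T)$ divides the index is exactly the output of Kato's Euler system argument (Kolyvagin derivatives and Rubin's axiomatic descent applied to the Euler system of Beilinson--Kato elements, \cite{KatoEuler}). I would obtain the reverse divisibility by descent from the cyclotomic Iwasawa Main Conjecture for $f$ --- conjecture \ref{ConjIMCIntro} in the situation of example (iii) of the introduction: specialising the cyclotomic zeta morphism at the character of $\Gamma$ attached to $\chi\chi_{\cyc}^{r-1}$, which is legitimate because the control map \eqref{EqIsomDeltaIntro} is an isomorphism under the standing hypothesis on $S$, turns the equality of characteristic ideals over $\Lambda_{\Iw}$ into the wanted equality of $\Ocal_{\chi}$-lattices, the residual discrepancy again consisting of Euler factors that are units.

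\emph{Main obstacle.} The real difficulty is concentrated in the reverse divisibility, that is in conjecture \ref{ConjIMCIntro} itself; everything else reduces to manipulations of determinants and to Kato's reciprocity law. I would accordingly state the integrality assertion conditionally on conjecture \ref{ConjIMCIntro} for the cyclotomic deformation of $f$, and invoke the cases in which that conjecture is known --- \cite{SkinnerUrban}, together with the refinements established later in this paper by Taylor--Wiles--Kisin descent (theorem \ref{TheoIntro} and corollary \ref{CorIntro}) --- to extract unconditional instances. A secondary, purely technical point is the comparison between $\RGamma_{c}(\Z[1/Np],-)$ of the introduction and $\RGamma_{\et}(\Z[1/S],-)$ used here, and the bookkeeping of the local Tamagawa factors at the primes of $S$; these are routine but must be carried out with care if the lattice identity is to hold exactly rather than merely up to $\Ocal_{\chi}^{\times}$.
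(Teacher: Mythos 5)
The statement you were given is labeled a \emph{Conjecture} in the paper (Conjecture \ref{ConjTNC}), and the paper offers no proof of it: it is the formulation of the Tamagawa Number Conjecture at $\pid$ for the strictly critical motive $\Wcal(f\tenseur\chi)(r)$, recast as a statement about a zeta morphism and fundamental lines. What you have written is therefore not a proof of a theorem of the paper but a \emph{reduction} of the conjecture to other (partly conjectural) inputs. Within that caveat, your reduction is essentially correct and in the spirit of the paper. The existence assertion is exactly Theorem \ref{TheoZetaMorphism}, which the paper records as Kato's theorem with a citation to \cite[Theorems 12.4, 12.5]{KatoEuler}; your outline via Beilinson--Kato elements and the explicit reciprocity law is the standard argument behind it. The integrality assertion is the genuinely conjectural content, and you correctly identify it as following (by descent) from the cyclotomic Iwasawa Main Conjecture for $T(f)$ (Conjecture \ref{ConjIMC}); the paper's mechanism for this descent is Proposition \ref{PropDetComSigma} and Theorem \ref{TheoCompatibility}, of which the specialisation you describe (from $\Oiwa$ down to $\Ocal_{\chi}$ at the character determined by $\chi\chi_{\cyc}^{r-1}$) is a special case. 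Kato's Euler system bound gives one divisibility unconditionally, as you say.

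Two points of care. First, the framing: unconditional instances of Conjecture \ref{ConjTNC} are obtained in this paper only \emph{after} Theorem \ref{TheoIntro}, Theorem \ref{TheoCorps} and Corollary \ref{CorIntro}, which themselves assume a single instance of the Iwasawa Main Conjecture somewhere on the deformation space; your write-up should be stated as a conditional implication $\text{(Conj. \ref{ConjIMC})}\Rightarrow\text{(Conj. \ref{ConjTNC})}$ rather than as a proof. Second, in this section of the paper the set $S$ is only required to contain $\{p\}$ (see the footnote), not $\{\ell\mid Np\}$; your remark that the discrepant Euler factors ``are units because $S\supseteq\{\ell\mid Np\}$'' is not available here, and the bookkeeping relating the $S$-partial zeta morphism, $\RGamma_{\et}(\Z[1/S],-)$ (with the extra factor $\Det^{-1}T(-1)^{+}$), and $\RGamma_{c}(\Z[1/Np],-)$ of the introductory formulation must be done via the Poitou--Tate/Artin--Verdier triangle and does involve genuine (possibly non-unit) Euler factors at primes dividing $N$ not in $S$.
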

Conjecture \ref{ConjTNC} is equivalent to the Tamagawa Number Conjecture for the motive $\Wcal(f\tenseur{\chi})(r)$ of \cite{BlochKato} expressing the value at zero of the $L$-function of $\Wcal(f\tenseur{\chi})(r)$ in terms of Tamagawa measures (see \cite[section 11.6]{FontaineValeursSpeciales}). Suppose in addition that $\chi$ is the trivial character and that $f$ is attached to a rational elliptic curve $E/\Q$ of conductor $N$ ($f$ belongs to $S_{2}(\Gamma_{0}(N))$ and has coefficients in $\Q$). Then we may compare \ref{ConjTNC} with the $p$-part of the Birch and Swinnerton-Dyer Conjecture predicting the equality
\begin{equation}\nonumber
v_{p}\left(\frac{L(E,1)}{\Omega_{E}}\right)=v_{p}\left(\frac{\cardinal{\Sha(E/\Q)[p^{\infty}]}\produit{\ell|N}{}\Tam_{\ell}(E/\Q)}{\cardinal{E(\Q)_{\tors}}^{2}}\right).
\end{equation}
A well-known translation (\cite{BlochKato,BurnsFlachMotivic,VenjakobETNC}) then shows that conjecture \ref{ConjTNC} predicts
\begin{equation}\nonumber
v_{p}\left(\frac{L(E,1)}{\Omega_{E}}\right)=v_{p}\left(\frac{\cardinal{\Sha(E/\Q)/\Sha(E/\Q)_{\partiediv}[p^{\infty}]}\produit{\ell|N}{}\Tam_{\ell}(E/\Q)}{\cardinal{E(\Q)_{\tors}}^{2}}\right).
\end{equation}
Under our ongoing hypothesis $L(E,1)\neq0$, it is known that $\Sha(E/\Q)$ is a finite group and hence that its divisible part $\Sha(E/\Q)_{\partiediv}$ vanishes (\cite{GrossZagier,KolyvaginEuler,KatoEuler}). Hence, conjecture \ref{ConjTNC} is equivalent to the $p$-part of the Birch and Swinnerton-Dyer Conjecture in our setting. This author believes that the relation between conjecture \ref{ConjTNC} and the $p$-part of the Birch and Swinnerton-Dyer Conjecture is not well understood even for a single prime $p$ and a single elliptic curve if $\ord_{s=1}L(E,s)\geq2$.
\subsection{The classical Iwasawa Main Conjecture}
Fix $T(f)$ a $G_{\Q}$-stable $\Ocal$-lattice inside $\Wcal(f)_{\et,\pid}$. Recall that $\Oiwa$ is the completed power series-ring $\Ocal[[\Gal(\Q_{\infty}/\Q)]]$. Seen as module over itself, it is endowed with a Galois action by the composition of the quotient $G_{\Q,S}\surjection\Gal(\Q_{\infty}/\Q)\plonge\Oiwa\croix$. Write $T(f)_{\Iw}$ for $T(f)\tenseur_{\Ocal}\Oiwa$ with $G_{\Q,S}$-action on both sides of the tensor product. 


%
Consider a morphism
\begin{equation}\label{EqMorIw}
\z_{\Iw}:T(f)_{\Iw}(-1)^{+}\fleche\Hun_{\et}\left(\Z[1/S],T(f)_{\Iw}\right).
\end{equation}
After descent to a finite level $n$, a choice of character $\chi$, a choice of a twist $r$ such that $\Wcal(f\tenseur{\chi})(r)$ is strictly critical and inversion of $p$, the morphism \eqref{EqMorIw} induces a morphism
\begin{equation}\nonumber
\z_{\chi,r}:V(r-1)_{\chi,\pid}^{+}\isom\Hun_{\et}(\Z[1/S],V(r)_{\chi,\pid}).
\end{equation}
\begin{DefEnglish}
A morphism
\begin{equation}\label{EqMorIwBis}
\z_{\Iw}:T(f)_{\Iw}(-1)^{+}\fleche\Hun_{\et}\left(\Z[1/S],T(f)_{\Iw}\right).
\end{equation}
is the \emph{$S$-partial zeta morphism} of $T(f)_{\Iw}$ if the morphism $\z_{\chi,r}$ it induces coincides with the $S$-partial zeta morphism $\z_{S}(f\tenseur\chi)(r)$ for all choice of $\chi$ and $r$ such that $\Wcal(f\tenseur{\chi})(r)$ is strictly critical.
\end{DefEnglish}
The following deep theorem of K. Kato establishes that $T(f)_{\Iw}$ has an {$S$-partial zeta morphism}.
\begin{Theo}[K. Kato]\label{TheoZetaMorphism}
For all $S\supset\{p\}$, there exists an $S$-partial zeta morphism
\begin{equation}\nonumber
\z(f)_{\Iw}:T(f)_{\Iw}(-1)^{+}\fleche\Hun_{\et}\left(\Z[1/S],T(f)_{\Iw}\right)
\end{equation}
and by descent an $S$-partial zeta morphism
\begin{equation}\nonumber
\z_{S}(f\tenseur\chi)(r):V(r-1)_{\chi,\pid}^{+}\isom\Hun_{\et}(\Z[1/S],V(r)_{\chi,\pid})
\end{equation}
provided $\Wcal(f\tenseur{\chi})(r)$ is strictly critical.
\end{Theo}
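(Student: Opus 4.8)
The plan is to recall Kato's construction of the zeta element attached to a modular form \cite{KatoEuler}; the argument proceeds in three steps. \textbf{Step 1: Beilinson--Kato elements in motivic cohomology.} First I would, for auxiliary integers $c,d$ prime to $6Np$ and each integer $n\geq0$, form out of the Siegel units on the modular curve of level $Np^{n}$ a Steinberg symbol $\{{}_{c}g,{}_{d}g\}$, obtaining a class in the second motivic cohomology group (equivalently, in $K_{2}$) of that curve. The distribution relations of Siegel units make these classes norm-coherent in the cyclotomic direction of the tower $\{Y(Np^{n})\}_{n}$, while the behaviour of modular units under the Hecke correspondences at primes $\ell\nmid Np$ produces the Euler system norm relations with the expected Euler factors; varying the level at such primes passes likewise from $S=\{\ell\mid Np\}$ to an arbitrary finite $S\supset\{p\}$.

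\textbf{Step 2: Galois cohomology and the Iwasawa limit.} Next I would apply the étale Chern class map and the Hochschild--Serre spectral sequence relating the étale cohomology of $Y(Np^{n})_{\Z[1/S]}$ to that of $G_{\Q,S}$ acting on the étale cohomology of $Y(Np^{n})_{\Qbar}$; as $f$ is cuspidal only the $H^{1}$-part of the curve contributes, so after projection to the $f$-isotypic realization $\Wcal(f)_{\et,\pid}$ and the standard normalisation removing the dependence on $(c,d)$ one lands in $\Hun_{\et}\bigl(\Z[1/S],T(f)\tenseur\psi\bigr)$ for $\psi$ ranging over the characters of $\Gal(\Q_{n}/\Q)$. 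The norm-coherence from Step 1 assembles these classes, in the inverse limit over $n$, into a single $\z(f)_{\Iw}\in\Hun_{\et}\bigl(\Z[1/S],T(f)_{\Iw}\bigr)$ (up to a Tate twist to match the module in the statement). Since $p$ is odd and $\rho_{f}$ is odd, $T(f)_{\Iw}(-1)^{+}$ is free of rank one over $\Oiwa$, so this class is exactly the datum of an $\Oiwa$-morphism \eqref{EqMorIwBis}.

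\textbf{Step 3: the explicit reciprocity law.} It then remains to prove that the morphism so obtained is \emph{the} $S$-partial zeta morphism of $T(f)_{\Iw}$: namely that for every $(\chi,r)$ with $\Wcal(f\tenseur\chi)(r)$ strictly critical, the induced $\z_{\chi,r}\colon V(r-1)_{\chi,\pid}^{+}\fleche\Hun_{\et}(\Z[1/S],V(r)_{\chi,\pid})$ is an isomorphism satisfying the normalisation \eqref{EqZetaPeriodComplex}. Well-definedness of the specialisation is formal (functoriality of étale cohomology and of $\Det$, together with the fact that $V(r-1)_{\chi,\pid}^{+}$ is the specialisation of $T(f)_{\Iw}(-1)^{+}$ at $\la$ after inverting $p$), so the whole content sits in Kato's explicit reciprocity law: one computes the Bloch--Kato dual exponential $\exp^{*}$ of the localisation at $p$ of a finite-level incarnation of $\z(f)_{\Iw}$ and identifies it, up to an explicit period and elementary factors and against a canonical basis of $\Fil^{0}V(r)_{\chi,\dR}$, with the value $L_{S}(f^{*},\chi^{-1},1-r)$. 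Plugging this into the definitions of $\per_{p}$ (built from $\exp^{*}$, cf.\ \eqref{EqPerP}) and of $\per_{\C}$ (built from the Betti--de Rham comparison, cf.\ \eqref{EqPerC}) yields precisely \eqref{EqZetaPeriodComplex}; and since strict criticality --- via Rohrlich's non-vanishing theorem \cite{Rohrlich} together with the functional equation --- forces that $L$-value to be non-zero, the same computation shows that $\z_{\chi,r}$ carries a basis vector to a class with non-zero localisation at $p$, hence is non-zero, hence an isomorphism (its source and target both being one-dimensional over $E_{\chi}$ by strict criticality). Uniqueness of the $S$-partial zeta morphism having already been noted, both assertions of the theorem then follow. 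I expect this reciprocity law --- the bookkeeping of the precise constant and of its compatibility with the chosen periods and with the functional equation --- to be the main obstacle; it is the deepest ingredient of Kato's work.
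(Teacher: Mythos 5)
The paper's proof of this theorem consists solely of the citation to \cite[Theorems 12.4 and 12.5]{KatoEuler}, and your three-step outline — Beilinson–Kato classes built from Steinberg symbols of Siegel units with their distribution and Euler-system norm relations, passage to Iwasawa-theoretic étale cohomology via the Chern class map and Hochschild–Serre with projection to the $f$-isotypic component, and the explicit reciprocity law comparing $\exp^{*}$ of the localisation at $p$ with $L_{S}(f^{*},\chi^{-1},1-r)$ to obtain the normalisation \eqref{EqZetaPeriodComplex} — is a faithful summary of exactly that reference. The proposal is correct and takes essentially the same approach as the paper.
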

\begin{proof}
See \cite[Theorems 12.4, 12.5]{KatoEuler}.
\end{proof}
The complexes $\RGamma_{\et}\left(\Z[1/S],T(f)_{\Iw}\right)$ and $T(f)_{\Iw}(-1)^{+}$ are bounded complexes $\Oiwa$-modules. As $\Oiwa$ is a regular local ring, they are perfect complexes by  the Auslander-Buchsbaum and Serre theorem. Hence, they have well-defined determinants over $\Oiwa$. Put
\begin{equation}\nonumber
\Delta_{\Oiwa}(T(f)_{\Iw})\eqdef\Det^{-1}_{\Oiwa}\RGamma_{\et}\left(\Z[1/S],T(f)_{\Iw}\right)\tenseur_{\Oiwa}\Det^{-1}_{\Oiwa}T(f)_{\Iw}(-1)^{+}.
\end{equation}
Exactly as in the case of $\Ocal_{\chi}$, the morphism \eqref{EqMorIw} induces an isomorphism
\begin{equation}\nonumber
\z(f)_{\Iw}:\Delta_{\Oiwa}(T(f)_{\Iw})\tenseur_{\Oiwa}\Frac(\Oiwa)\isom\Frac(\Oiwa).
\end{equation}
The following is the statement of the Iwasawa Main Conjecture for the motive $\Wcal(f)$.
\begin{Conj}[The Iwasawa Main Conjecture \cite{KatoEuler}]\label{ConjIMC}
The $S$-partial zeta morphism $\z(f)_{\Iw}$ of $T(f)_{\Iw}$ induces an isomorphism
\begin{equation}\nonumber
\z(f)_{\Iw}:\Delta_{\Oiwa}(T(f)_{\Iw})\isom\Oiwa.
\end{equation}
\end{Conj}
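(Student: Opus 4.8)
The plan is to deduce Conjecture \ref{ConjIMC} from an equality of characteristic ideals over $\Oiwa$, assuming throughout (as in assumption \ref{HypIntro}) that $\rhobar$ is absolutely irreducible with image containing a conjugate of $\SL_{2}(\fp)$. First I would record that, by Theorem \ref{TheoZetaMorphism} together with the non-vanishing of $L_{S}(f,\chi,r)$ for all but finitely many characters $\chi$ (\cite{Rohrlich}), the zeta morphism $\z(f)_{\Iw}$ becomes an isomorphism after applying $-\tenseur_{\Oiwa}\Frac(\Oiwa)$, so that $\z(f)_{\Iw}\bigl(\Delta_{\Oiwa}(T(f)_{\Iw})\bigr)$ is a well-defined invertible $\Oiwa$-submodule of $\Frac(\Oiwa)$ and Conjecture \ref{ConjIMC} asserts that it equals $\Oiwa$. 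Since $\rhobar$ is absolutely irreducible we have $H^{0}_{\et}(\Z[1/S],T(f)_{\Iw})=0$, so $\RGamma_{\et}(\Z[1/S],T(f)_{\Iw})$ is a perfect complex of $\Oiwa$-modules (recall $\Oiwa$ is regular) with cohomology concentrated in degrees $1$ and $2$; the global Euler characteristic formula gives that $\Hun_{\et}(\Z[1/S],T(f)_{\Iw})$ has generic rank one, and that $H^{2}_{\et}(\Z[1/S],T(f)_{\Iw})$ is $\Oiwa$-torsion is part of Kato's theorem. Since $\z(f)_{\Iw}$ embeds the rank-one module $T(f)_{\Iw}(-1)^{+}$ into $\Hun_{\et}(\Z[1/S],T(f)_{\Iw})$ with free rank-one image $\Oiwa\cdot\z(f)_{\Iw}$ (the line spanned by Kato's class), a determinant computation shows that $\z(f)_{\Iw}\bigl(\Delta_{\Oiwa}(T(f)_{\Iw})\bigr)=\Oiwa$ holds if and only if $\carac_{\Oiwa}\bigl(\Hun_{\et}(\Z[1/S],T(f)_{\Iw})/\Oiwa\cdot\z(f)_{\Iw}\bigr)=\carac_{\Oiwa}\bigl(H^{2}_{\et}(\Z[1/S],T(f)_{\Iw})\bigr)$; this equality of characteristic ideals is thus the content of Conjecture \ref{ConjIMC}.

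One of the two divisibilities, namely that $\carac_{\Oiwa}H^{2}_{\et}$ divides $\carac_{\Oiwa}(\Hun_{\et}/\Oiwa\cdot\z(f)_{\Iw})$, is the output of Kato's Euler system: the Beilinson--Kato classes, whose bottom layer is $\z(f)_{\Iw}$, bound $H^{2}_{\et}$ through the Euler-system machinery, the required large-image input being supplied by assumption \ref{HypIntro}; here I would simply invoke \cite[Theorems 12.4, 12.5]{KatoEuler}. The reverse divisibility, that $\carac_{\Oiwa}(\Hun_{\et}/\Oiwa\cdot\z(f)_{\Iw})$ divides $\carac_{\Oiwa}H^{2}_{\et}$, is the hard half, and I would establish it by one of two routes. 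The direct route constructs enough Eisenstein congruences on a suitable unitary group and runs a Skinner--Urban-type argument (\cite{SkinnerUrban,FouquetWan}) --- at the cost of an auxiliary prime with residually Steinberg reduction and a handful of further hypotheses --- thereby bounding $\carac_{\Oiwa}H^{2}_{\et}$ from below. The indirect route, in the spirit of this manuscript, chooses a congruent eigencuspform $g$ (same residual representation, different ramification or level) for which the classical Iwasawa Main Conjecture is already known, and transports the statement along the congruence; this requires a framework in which the zeta morphisms are themselves compatible under congruence, i.e. a version of Conjecture \ref{ConjIMCIntro} with coefficients in a Hecke algebra, which is exactly why the equivariant formulation of conjecture \ref{ConjETNCIntro} is indispensable here.

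Finally I would combine the two divisibilities to conclude the desired equality. In the direct route the two one-sided divisibilities coincide at the level of characteristic ideals, and any residual ambiguity by a power of $p$ would be removed by comparing leading terms at a single strictly critical twist of $f$ --- a comparison which is the Tamagawa Number Conjecture (conjecture \ref{ConjTNC}) at that point and which is accessible through Kato's explicit reciprocity law and the interpolation property of $\z(f)_{\Iw}$. In the indirect route one must in addition verify that no spurious $p$-power index is introduced in the descent from the large Hecke algebra (or the universal deformation ring) to $\Oiwa$; controlling this comparison, rather than either divisibility taken in isolation, is the main obstacle, and it is precisely the point at which resolving the singularities of the global deformation ring by Taylor--Wiles--Kisin patching (as in the proofs of Theorems \ref{TheoMinIntro} and \ref{TheoIntro}) has to be brought to bear.
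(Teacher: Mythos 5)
The statement you were asked to prove is labeled \emph{Conjecture} \ref{ConjIMC} in the paper, and the paper offers no proof of it; it remains open in general. There is therefore no paper proof to compare against. The paper's actual contribution is to establish conditional and equivalence results around this conjecture: Proposition \ref{PropIMCIMC} translates it into the usual $p$-adic $L$-function vs.\ Selmer-module formulation in the ordinary case, and Theorem \ref{TheoCorps} (equivalently Theorems \ref{TheoMinIntro} and \ref{TheoIntro}) shows that, under assumptions \ref{HypMain} and \ref{HypLocalDef}, the conjecture holds for every motivic point of $\Hsmr$ as soon as it holds for a single one, with Corollary \ref{CorIntro} giving a sufficient criterion for finding that single point.

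Your write-up is not a proof but a survey of strategies, and it should be flagged as such. That said, the structural outline is broadly accurate: the reduction of the conjecture to an equality of characteristic ideals of $\Hun_{\et}/\Oiwa\cdot\z(f)_{\Iw}$ and $H^{2}_{\et}$ is correct (given that under assumption \ref{HypIntro} the cohomology is concentrated in degrees $1$ and $2$ and $\Hun$ has generic rank one), and the Kato divisibility $\carac_{\Oiwa}H^{2}_{\et}\mid\carac_{\Oiwa}(\Hun_{\et}/\Oiwa\cdot\z(f)_{\Iw})$ is indeed \cite[Theorems 12.4, 12.5]{KatoEuler}. Two caveats though. First, Kato's integral divisibility has a known exception (when $\rho_f|_{G_{\qp}}$ is ordinary but not potentially good), which the paper in fact removes as a byproduct (Corollary \ref{CorKatoStronger}); invoking Kato unqualified skips that point. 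Second, your remark about a residual $p$-power ambiguity to be fixed by a single critical-value comparison misstates the logic: if one has both divisibilities as ideals of $\Oiwa$ there is nothing left to fix; the worry only arises when Kato's bound is a priori established in $\Oiwa[1/p]$. Finally, neither the Skinner--Urban route nor the congruence-transport route you sketch is carried out, and both require hypotheses (auxiliary Steinberg prime, or a congruent form with known IMC plus the local deformation constraints of assumption \ref{HypLocalDef}) that are not consequences of assumption \ref{HypIntro} alone; so the conjecture remains a conjecture, exactly as the paper states.
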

When the eigencuspform $f$ is nearly ordinary at $p$, conjecture \ref{ConjIMC} implies the usual formulation of the Iwasawa Main Conjecture in terms of $p$-adic $L$-function and Selmer modules. For instance, the following proposition holds.
\begin{Prop}\label{PropIMCIMC}
Assume that  $f$ is nearly ordinary at $p$ and that conjecture \ref{ConjIMC} holds. Then there is an equality
\begin{equation}\label{EqEgalOrdLpadic}
\left(L^{\cyc}_{p}(f)\right)\Oiwa=\carac_{\Oiwa}\Htilde^{2}_{f}(G_{\Q,\Sigma},T(f)_{\Iw}).
\end{equation}
Here, $L^{\cyc}_{p}(f)\in\Oiwa$ is the Mazur-Swinnerton-Dyer $p$-adic $L$-function (\cite{MazurSwinnertonDyer}) and $\Htilde^{2}_{f}(G_{\Q,\Sigma},T(f)_{\Iw})$ is the second cohomology group of the \Nekovar-Selmer complex with Greenberg's local condition at $p$ (\cite{SelmerComplexes}). 
\end{Prop}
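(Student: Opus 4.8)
The plan is to deduce \eqref{EqEgalOrdLpadic} from the validity of conjecture \ref{ConjIMC} in three steps: first unwind the determinant formulation of conjecture \ref{ConjIMC} into an equality of $\Oiwa$-fractional ideals, then rewrite the left-hand side of that equality as $\carac_{\Oiwa}$ of the \Nekovar--Greenberg Selmer complex by a comparison of complexes, and finally identify the correcting local factor at $p$ with the ideal generated by $L^{\cyc}_{p}(f)$ via Kato's explicit reciprocity law. All the ingredients are by now standard — this is essentially the (classical) translation between the determinant form of the main conjecture and its Mazur--Wiles formulation — so the real work consists in assembling them with consistent normalizations.

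\emph{Step 1.} Since $\Oiwa$ is a two-dimensional regular local domain and $p$ is odd, $\RGamma_{\et}(\Z[1/\Sigma],T(f)_{\Iw})$ is a perfect complex acyclic outside degrees $1$ and $2$, with $H^{0}$ vanishing because $V(f)$ is irreducible; theorem \ref{TheoZetaMorphism} together with Rohrlich's non-vanishing theorem (\cite{Rohrlich}), which forces $\z(f)_{\Iw}$ to be non-$\Oiwa$-torsion, shows moreover that $\Hun_{\et}(\Z[1/\Sigma],T(f)_{\Iw})$ is torsion-free of $\Oiwa$-rank $1$ and that $H^{2}_{\et}(\Z[1/\Sigma],T(f)_{\Iw})$ is $\Oiwa$-torsion. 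Viewing $\z(f)_{\Iw}$ as a morphism of perfect complexes $T(f)_{\Iw}(-1)^{+}[-1]\fleche\RGamma_{\et}(\Z[1/\Sigma],T(f)_{\Iw})$ — here $p$ odd guarantees that $T(f)_{\Iw}(-1)^{+}$ is free of rank one over $\Oiwa$ — its cone $C$ is perfect and acyclic after $-\tenseur_{\Oiwa}\Frac(\Oiwa)$, so there is a canonical identification $\Delta_{\Oiwa}(T(f)_{\Iw})\isocan\Det^{-1}_{\Oiwa}C$, and conjecture \ref{ConjIMC} is precisely the assertion that $\Det^{-1}_{\Oiwa}C=\Oiwa$ inside $\Det^{-1}_{\Oiwa}C\tenseur_{\Oiwa}\Frac(\Oiwa)=\Frac(\Oiwa)$.

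\emph{Step 2.} Near-ordinariness of $f$ at $p$ supplies the $G_{\qp}$-stable rank-one submodule $\Fil^{+}T(f)_{\Iw}\subset T(f)_{\Iw}$, with rank-one quotient $\Fil^{-}T(f)_{\Iw}$, defining Greenberg's local condition, and the defining triangle of $\RGammaf(G_{\Q,\Sigma},T(f)_{\Iw})$ yields a canonical identification of $\Det^{-1}_{\Oiwa}\RGammaf(G_{\Q,\Sigma},T(f)_{\Iw})$ with $\Det^{-1}_{\Oiwa}\RGamma_{\et}(\Z[1/\Sigma],T(f)_{\Iw})\tenseur_{\Oiwa}\Det_{\Oiwa}\RGamma(G_{\qp},\Fil^{-}T(f)_{\Iw})$, up to the local terms at the primes $\ell\in\Sigma\setminus\{p\}$ and at $\infty$: the term at $\infty$ is trivial because $p$ is odd, and those at $\ell\neq p$ have canonically trivial determinant for \Nekovar's unramified local condition (the Euler factors they would otherwise produce being exactly those already built into the interpolation defining $L^{\cyc}_{p}(f)$). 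Because the tautological character of $\Gamma$ has infinite order, $H^{0}(G_{\qp},\Fil^{-}T(f)_{\Iw})=0$, so the natural map $\Htilde^{1}_{f}(G_{\Q,\Sigma},T(f)_{\Iw})\fleche\Hun_{\et}(\Z[1/\Sigma],T(f)_{\Iw})$ is injective; as $\Htilde^{1}_{f}$ is $\Oiwa$-torsion (same rank count as in Step 1, Greenberg's condition being of corank one at $p$) while its target is torsion-free, $\Htilde^{1}_{f}(G_{\Q,\Sigma},T(f)_{\Iw})=0$, $\RGammaf(G_{\Q,\Sigma},T(f)_{\Iw})$ is quasi-isomorphic to $\Htilde^{2}_{f}(G_{\Q,\Sigma},T(f)_{\Iw})[-2]$, and therefore $\Det^{-1}_{\Oiwa}\RGammaf(G_{\Q,\Sigma},T(f)_{\Iw})=\carac_{\Oiwa}\Htilde^{2}_{f}(G_{\Q,\Sigma},T(f)_{\Iw})$.

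\emph{Step 3.} It remains to compute, through the trivialization of Step 1, the integral structure of $\Det_{\Oiwa}\RGamma(G_{\qp},\Fil^{-}T(f)_{\Iw})$, and here I would invoke Kato's explicit reciprocity law (\cite{KatoEuler},\cite{FukayaKato}): the composition of localization at $p$, projection to $\Fil^{-}T(f)_{\Iw}$, and the Coleman/Perrin-Riou map $\Hun(G_{\qp},\Fil^{-}T(f)_{\Iw})\isom\Oiwa$ — an isomorphism of free rank-one $\Oiwa$-modules precisely because $f$ is near-ordinary at $p$, so that no denominators occur — carries $\z(f)_{\Iw}$ to a unit multiple of $L^{\cyc}_{p}(f)$. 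Transporting the equality $\Det^{-1}_{\Oiwa}C=\Oiwa$ of Step 1 through the comparison of Step 2 and feeding in this computation turns it into $\Det^{-1}_{\Oiwa}\RGammaf(G_{\Q,\Sigma},T(f)_{\Iw})=\left(L^{\cyc}_{p}(f)\right)\Oiwa$, which is \eqref{EqEgalOrdLpadic}. The main obstacle is this last step: threading the chain of comparison isomorphisms through Kato's reciprocity law with all normalizations matched — reconciling the de Rham and Betti periods implicit in $\Delta_{\Oiwa}(T(f)_{\Iw})$ with the period hidden inside $L^{\cyc}_{p}(f)$, and bookkeeping correctly the local Euler ("trivial zero") factor at $p$ together with the unramified local conditions at $\ell\neq p$ — is the delicate point; it is carried out in \cite{KatoEuler} and \cite{FukayaKato}, and in \Nekovar's language of Selmer complexes in \cite{SelmerComplexes}.
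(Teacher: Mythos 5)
Your Steps 1--2 correctly unwind conjecture~\ref{ConjIMC} into a statement about the Selmer complex with Greenberg's local condition; this is the standard translation, and it is morally the content of Kato's~\cite[Section 17.13]{KatoEuler}, which is what the paper cites for the principal-series case. So the overall architecture agrees with the paper. The gap is in Step~3, and specifically in your assertion that the Coleman/Perrin-Riou map ``carries $\z(f)_{\Iw}$ to a unit multiple of $L^{\cyc}_{p}(f)$, \ldots precisely because $f$ is near-ordinary at $p$, so that no denominators occur.'' This is exactly where the argument bifurcates into the case $\pi(f)_p$ principal series and the case $\pi(f)_p$ Steinberg, and only in the former does the cited Kato comparison do what you want directly. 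When $\pi(f)_{p}$ is Steinberg, $L^{\cyc}_{p}(f)$ has an exceptional zero --- the paper explicitly remarks after the proposition that the equality~\eqref{EqEgalOrdLpadic} nevertheless holds --- and the identification of the local trivialization at $p$ with the ideal $(L^{\cyc}_{p}(f))$ becomes a genuinely delicate matter that Kato's normalizations alone do not settle; this is handled in Colmez~\cite[Section 4.4]{ColmezBSD}, which is the second citation in the paper's proof and which does not appear in your proposal. You flag ``the local Euler (`trivial zero') factor at $p$'' as part of the bookkeeping but then dispatch it to \cite{KatoEuler} and \cite{FukayaKato}, which is not where the Steinberg case lives. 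To make your outline complete, Step~3 must be split according to whether $\pi(f)_p$ is principal series or Steinberg, with~\cite{ColmezBSD} supplying the latter.

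Two minor cautionary remarks in case you flesh this out: first, the Euler factors you attribute to the local conditions at $\ell\in\Sigma\setminus\{p\}$ compare the $\Sigma$-partial zeta morphism with an imprimitive $p$-adic $L$-function, and it is the way $L^{\cyc}_p(f)$ is normalized (and the choice of $S$ in the definition of the fundamental line) that must absorb them, so ``already built into the interpolation'' needs to be checked against the paper's conventions; second, the vanishing $\Htilde^{1}_{f}(G_{\Q,\Sigma},T(f)_{\Iw})=0$ you use to identify $\Det^{-1}\RGammaf$ with $\carac_{\Oiwa}\Htilde^{2}_{f}$ is correct here, but it is not purely a rank count — it relies on the injectivity of $\Htilde^{1}_{f}\to\Hun_{\et}$, which in turn uses $H^{0}(G_{\qp},\Fil^{-}T(f)_{\Iw})=0$ as you say, and one should verify this survives in the exceptional-zero situation.
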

Note that the proposition holds even if $L_{p}^{\cyc}(f)$ has an exceptional zero.
\begin{proof}
When in addition to being nearly ordinary, the $\GL_{2}(\qp)$-representation $\pi(f)_{p}$ is in the principal series, this follows from the arguments of \cite[Section 17.13]{KatoEuler}. The remaining case, that is to say if $\pi(f)_{p}$ is Steinberg, is treated in \cite[Section 4.4]{ColmezBSD}.
\end{proof}
When $f$ is of weight 2 or when $a_{p}(f)=0$, there are conjectures of R. Pollack, S. Kobayashi and I. Sprung predicting equalities between a pair of ideals generated in $\Oiwa$ by a pair of $p$-adic $L$-functions on one hand and the characteristic ideals of a pair of $\Oiwa$-adic Selmer groups on the other hand. When they are known to make sense, these conjectures also follow from conjecture \ref{ConjIMC} (see \cite{KobayashiIMC,PollackSupersingular,SprungIMC} for formulations and details).

\subsection{The universal Iwasawa Main Conjecture}
\subsubsection{Deformations}\label{SubLevels}
Let $k$ be the residual field of $\Ocal$ (the letter $k$ also denotes the weight of the eigencuspform $f$, this should cause no confusion). Let
\begin{equation}\nonumber
\rhobar:G_{\Q}\fleche\GL_{2}(k)
\end{equation}
be the residual representation attached to $f$. Let $N(\rhobar)$ be its tame Artin conductor. Let $\Si$ be a finite set of finite primes containing $\{\ell|N(\rhobar)p\}$ and let $\Si^{(p)}$ be $\Si\backslash\{p\}$.

Let $U\subset\GL_{2}(\A_{\Q}^{(\infty)})$ be a compact open subgroup. Write $U_{p}=U\cap\GL_{2}(\qp)$ and $U=U_{p}U^{(p)}$ with $U^{(p)}\subset\G(\A_{\Q}^{(p\infty)})$. Let $\Sigma(U)$ be the finite set of finite places $\ell$ such that $U_{\ell}$ is not compact open maximal. To $U$ is attached the modular curve $Y(U)$ whose set of complex points is the double quotient
\begin{equation}\nonumber
Y(U)(\C)\simeq\GL_{2}(\Q)\backslash\left(\C-\R\times\GL_{2}(\A_{\Q}^{(\infty)})/U\right).
\end{equation}
The classical Hecke algebra acts on the $\Z$-module $S_{k}(U)$ of eigenforms of level $U$ and weight $k$. The reduced Hecke algebra $\Hecke(U)$ is the sub-$\zp$-algebra generated inside the classical Hecke algebra by the diamond operators $\diamant{\ell}$ and the Hecke operators $T(\ell)$ for $\ell\notin\Sigma(U)^{(p)}$. It is a finite, flat, reduced, semi-local $\zp$-algebras.

A compact open subgroup $U^{(p)}\subset\GL_{2}(\A_{\Q}^{(p\infty)})$ is said to be allowable (with respect to $\rhobar$) if there exists a maximal ideal $\mgot_{\rhobar}$ of the Hecke algebra generated $\Hecke(U^{(p)})$ such that
\begin{equation}\nonumber
\begin{cases}
\tr\rhobar(\Fr(\ell))=T(\ell)\modulo\mgot_{\rhobar}\\
\det\rhobar(\Fr(\ell))=\ell\diamant{\ell}\modulo\mgot_{\rhobar}
\end{cases}
\end{equation}
for all $\ell\notin\Sigma(U^{(p)})\cup\{p\}$ (we recall that $\Sigma(U^{(p)})$ is the set of primes out of which $U^{(p)}\subset\G(\A_{\Q}^{(p\infty)})$ is maximal). A finite set of primes $\Sigma\supset\{\ell|N(\rhobar)p\}$ is said to be allowable (with respect to $\rhobar$) if there exists an allowable compact open subgroup $U^{(p)}\subset\G(\A_{\Q}^{(p\infty)})$ such that $\Sigma\supset\Sigma(U^{(p)})$.

Fix $U'\subset U$ two allowable compact open subgroups. Denote respectively by $\mgot'_{\rhobar}$ and $\mgot_{\rhobar}$ the maximal ideals of $\Hecke^{\Sigma'}(U')$ and $\Hecke^{\Sigma}(U)$ attached to $U'$ and $U$. Then the inclusion $S_{k}(U)\subset S_{k}(U')$ induces by restriction a map $\Hecke(U')_{\mgot'_{\rhobar}}\fleche\Hecke(U)_{\mgot_{\rhobar}}$ which is always a surjection and which is an isomorphism if $U$ is sufficiently small. Define
\begin{equation}\nonumber
\Hs_{\mr}\eqdef\limproj{U_{p}}\ \Hecke(U_{p}U^{(p)})_{\mr}.
\end{equation}
The ring $\Hsmr$ does not depend on the weight $k$ but does depend in general on $U^{(p)}$ though this dependance is suppressed for simplicity of notation. If $U^{(p)}$ is sufficiently small, then $\Hsmr$ depends only on $\Si$. However, in the rest of the text, the group $U^{(p)}$ is not necessarily be sufficiently small in that sense at primes $\ell\equiv\pm1\modulo p$. In particular, the ring $\Hsm$ does depend on $U^{(p)}$ in general even though this is suppressed from the notation. The $\zp$-algebra $\Hsmr$ is flat and its relative dimension is at least 3 (see \cite{GouveaMazur}).

Assume that $\rhobar:G_{\Q,\Si}\fleche\GL_{2}(k)$ satisfies the assumptions of the introduction, repeated here.
\begin{Hyp}\label{HypMain}
\begin{enumerate}
\item The image of $\rhobar$ contains a subgroup conjugated to $\SL_{2}(\fp)$.
\item\label{ItemrhobarEnP} If $\rhobar|_{G_{\qp}}$ is an extension of characters
\begin{equation}\nonumber
\suiteexacte{}{}{\chi}{\rhobar|_{G_{\qp}}}{\psi},
\end{equation}
then $\chi\psi^{-1}\neq1$ and $\chi\psi^{-1}\neq\bar{\chi}^{-1}_{\cyc}$ (here $\chi_{\cyc}$ is the $p$-adic cyclotomic character).
\end{enumerate}
\end{Hyp}
Under assumption \ref{HypMain}, there exists a $G_{\Q,\Sigma}$-representation $\Ts$ with coefficients in $\Hsmr$ with residual representation isomorphic to $\rhobar$. A local morphism $\la:\Hsmr\fleche\qpbar$ is \emph{motivic} if it is equal to the trace of the $G_{\Q,\Si}$-representation $\rho_{g}$ attached to an eigencuspform $g\in S_{k'}(U'_{p}U^{(p)})$ for some compact open subgroup $U'_{p}\subset\G(\qp)$ and some integer $k'\geq2$ up to twist by an integer power $r$ of the cyclotomic character and a finite order character $\chi$ of $\Gal(\Q_{\infty}/\Q)$ . Equivalently, $\la$ is a motivic point of $\Spec\Hsmr$ if it is up to twist the system of eigenvalues of an eigencuspform of weight greater than 2. Motivic points of $\Hsmr$ form a Zariski-dense set. The ring $\Hsm$ is the universal deformation ring representing deformations of $\rhobar$ unramified outside $\Si$ and which are attached to eigencuspforms for $U^{(p)}$ at de Rham points. 

If $\la$ is a motivic point with values in a discrete valuation ring $S$, let $T(\la)$ be the $G_{\Q,\Si}$-representation $\Ts\tenseur_{\Hsmr,\la}S$, let $T(\la)_{\Iw}$ be the $G_{\Q,\Si}$-representation $\Ts\tenseur_{\Hsmr,\la}S_{\Iw}$ and let $\zs(\la)_{\Iw}$ be the $\Si$-partial zeta morphism of theorem \ref{TheoZetaMorphism}.  

\subsubsection{The Iwasawa Main Conjecture}
The following theorem, which is due to K. Nakamura and independently P. Colmez and S. Wang, is essential in the rest of the text. 
\begin{Theo}[\cite{NakamuraUniversal}, Theorem 1.1]\label{TheoNakamura}
There exists a zeta morphism
\begin{equation}\nonumber
\zs:\Ts(-1)^{+}\fleche\Hun_{\et}(\Z[1/\Sigma],\Ts)
\end{equation}
such that for all motivic point $\la$ the following diagram commutes 
\begin{equation}\label{DiagNaka}
\xymatrix{
\Ts(-1)^{+}\ar[d]\ar[rr]^{\zs}&&\Hun_{\et}(\Z[1/\Sigma],\Ts)\ar[d]\\
T(\la)_{\Iw}(-1)^{+}\ar[rr]^{\zs(\la)_{\Iw}}&&\Hun_{\et}(\Z[1/\Si],T(\la)_{\Iw}).
}
\end{equation}
\end{Theo}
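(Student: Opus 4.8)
The plan is to obtain $\zs$ by $p$-adically interpolating Kato's Euler system along the tower of levels at $p$ --- exploiting that the cyclotomic Iwasawa direction is already realised inside that tower --- and to check the interpolation at motivic points by comparison with Kato's construction at finite level.

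First I would record the structural facts that reduce $\zs$ to a single cohomology class. Under Assumption~\ref{HypMain} the representation $\rhobar$ is absolutely irreducible, so $H^{0}(G_{\Q,\Sigma},\Ts\tenseur M)=0$ for every $\Hsmr$-module $M$; since $p$ is odd, $\RGamma_{\et}(\Z[1/\Sigma],\Ts)$ is a perfect complex concentrated in degrees $1$ and $2$ which commutes with derived base change along $\Hsmr\fleche S$ and $\Hsmr\fleche S_{\Iw}$, and since $2$ is invertible and $\Hsmr$ is local, $\Ts(-1)^{+}$ is a free $\Hsmr$-module of rank one. Hence a zeta morphism $\zs\colon\Ts(-1)^{+}\fleche\Hun_{\et}(\Z[1/\Sigma],\Ts)$ is the same datum as an element of $\Hun_{\et}(\Z[1/\Sigma],\Ts)\tenseur_{\Hsmr}(\Ts(-1)^{+})^{\vee}$, and the commutativity of~\eqref{DiagNaka} at $\la\in\Spec^{\mot}\Hsmr$ is the requirement that the image of this element in $\Hun_{\et}(\Z[1/\Sigma],T(\la)_{\Iw})\tenseur(T(\la)_{\Iw}(-1)^{+})^{\vee}$ be the class attached to Kato's morphism $\z(f)_{\Iw}$ of Theorem~\ref{TheoZetaMorphism}. (Were $\Hun_{\et}(\Z[1/\Sigma],\Ts)$ known to be torsion-free over $\Hsmr$, Zariski-density of motivic points would force such an element to be unique; only existence is needed here.)

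To construct the element I would assemble the Beilinson--Kato elements level by level. For each $s\geq1$, Kato's construction out of Siegel units and the \'etale Chern class map gives, on the modular curve of level $\Gamma_{1}(Np^{s})$, a class in the $\mgot_{\rhobar}$-localised first \'etale cohomology; these are compatible under the trace maps as $s$ grows (the $U_{p}$-direction of the Euler system norm relations, which is also what produces the cyclotomic Iwasawa classes $\z(f)_{\Iw}$ in Kato's work). Localising at $\mgot_{\rhobar}$ and passing to the inverse limit over $s$ deposits these into $\Hun_{\et}(\Z[1/\Sigma],\Ts)\tenseur\qp$, where $\Ts$ is the universal deformation over $\Hsmr=\limproj{U_{p}}\Hecke(U_{p}U^{(p)})_{\mgot_{\rhobar}}$; concretely one runs the limit inside $\Hun_{\et}(\Z[1/\Sigma],\CompCoho)$ and transports it along the identification of the $\mgot_{\rhobar}$-part of completed cohomology with a module built from $\Ts$. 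Granting that the resulting element lies integrally in $\Hun_{\et}(\Z[1/\Sigma],\Ts)$, it defines $\zs$, and by construction its specialisation at a classical motivic point $\la$ attached to $f$ (of level $Np^{s_{0}}$ up to a twist) is the image of the level-$Np^{s_{0}}$ Kato class, which by the projection formula defining Kato's Euler system (\cite[Theorems 12.4, 12.5]{KatoEuler}) is $\z(f)_{\Iw}$ after descent; this gives~\eqref{DiagNaka}.

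The hard part will be the integrality just granted: a priori the interpolated object lies only in $\Hun_{\et}(\Z[1/\Sigma],\Ts)\tenseur\qp$, and showing it has no denominators over $\Hsmr$ is where the large-image hypothesis and the absence of congruences between $\rhobar$ and reducible representations enter, namely to exclude spurious poles along the loci of the family where the deformation degenerates to a reducible one. This may be done either by a direct analysis of the Siegel-unit construction in the limit or, as in the cited work of Nakamura and of Colmez--Wang, by rephrasing $\zs$ in terms of $(\varphi,\Gamma)$-modules over the Robba ring in families and invoking the expected local properties --- functional equation and local $\varepsilon$-isomorphisms --- of Kato's Euler system, which bound the denominators uniformly in the family.
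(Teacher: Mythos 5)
The paper does not prove Theorem~\ref{TheoNakamura}: it is quoted verbatim as \cite[Theorem 1.1]{NakamuraUniversal} (due to Nakamura and, independently, Colmez--Wang), and the only commentary offered is that the proof ``requires the full strength of the $p$-adic Langlands Correspondence'' of \cite{ColmezFoncteur,PaskunasMontreal}. There is therefore no internal argument to compare with; any proof you give is necessarily from scratch.

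Your outline is in the right shape: $\Ts(-1)^{+}$ is indeed free of rank one so $\zs$ amounts to a single class in $\Hun_{\et}(\Z[1/\Sigma],\Ts)\tenseur(\Ts(-1)^{+})^{\vee}$, and the idea of interpolating Kato's Beilinson--Kato classes up the $p$-power level tower (where the cyclotomic direction already lives, via the diamond operators inside $\Hsmr=\limproj{U_{p}}\Hecke(U_{p}U^{(p)})_{\mr}$) before checking~\eqref{DiagNaka} against Kato's norm relations is the correct skeleton. But the argument halts exactly at the point the paper flags as deep: you explicitly ``grant'' that the interpolated class is integral over $\Hsmr$ and not merely rational, and that granting \emph{is} the theorem. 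Two further points of diagnosis. First, the mechanism you reach for — local $\varepsilon$-isomorphisms and functional equations bounding denominators — is not how Nakamura closes the gap; integrality there rests on $p$-adic local Langlands in the concrete form of Emerton's local--global compatibility for completed cohomology and Pa\v{s}k\={u}nas's theorem on the image of Colmez's Montreal functor, which identify the $\mgot_{\rhobar}$-localised completed cohomology with a module over the local deformation ring at $p$ and thereby control the local behaviour of $\Ts$ at $p$ uniformly in the family. Second, ``absence of congruences between $\rhobar$ and reducible representations'' is not what prevents spurious denominators: the large-image hypothesis in assumption~\ref{HypMain} guarantees that a two-dimensional Galois representation over $\Hsmr$ exists and that the relevant $H^{0}$ vanish, but the integrality of $\zs$ is a purely local issue at $p$ and is not settled by global irreducibility. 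As written the proposal is a plausible plan, not a proof; the heart of the matter remains open.
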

We recall that this deep theorem requires the full strength of the $p$-adic Langlands Correspondence of \cite{ColmezFoncteur,PaskunasMontreal}.
\begin{DefEnglish}
The universal fundamental line is the rank-one, free $\Hsmr$-module $\Ds$ defined by
\begin{equation}\nonumber
\Ds\eqdef\Det^{-1}_{\Hsmr}\RGamma_{\et}\left(\Z[1/\Si],\Ts\right)\tenseur_{\Hsmr}\Det_{\Hsmr}^{-1}\Ts(-1)^{+}.
\end{equation}
\end{DefEnglish}
The $\Hsmr$-module $\Ts$ is a perfect complex of étale sheaves of $\Hsmr$-module. The functor $\RGamma_{\et}\left(\Z[1/\Si],-\right)$ sends perfect complex of étale sheaves to perfect complex of étale sheaves (see e.g \cite[Proposition (4.2.9)]{SelmerComplexes}) and commutes with arbitrary base change of ring. Hence the universal fundamental line $\Ds$ is well-defined and commutes with arbitrary change of coefficients.
\begin{Prop}\label{PropTorsion}
Let $Q(\Hsmr)$ be the total ring of quotients of $\Hsmr$. The morphism
\begin{equation}\nonumber
\zs:\Ts(-1)^{+}\fleche\Hun_{\et}(\Z[1/\Sigma],\Ts)
\end{equation}
induces
\begin{equation}\label{EqZetaElementSigma}
\zs:\Ds\tenseur_{\Hsmr}Q(\Hsmr)\isom Q(\Hsmr).
\end{equation}
\end{Prop}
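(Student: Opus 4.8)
The plan is to identify $\Ds$ with the inverse determinant of the cone of $\zs$, viewed as a morphism of perfect complexes, and then to check that this cone becomes acyclic over the total ring of quotients. Since $\Hsmr$ is reduced and noetherian, $Q(\Hsmr)=\prod_{i=1}^{n}\kappa(\pid_i)$ where $\pid_1,\dots,\pid_n$ are the finitely many minimal primes of $\Hsmr$ and $\kappa(\pid_i)=\Frac(\Hsmr/\pid_i)$; as $\Ds$ and $\RGamma_{\et}(\Z[1/\Si],-)$ commute with arbitrary base change, it suffices to prove that for each $i$ the morphism $\zs$ induces an isomorphism $\Ds\tenseur_{\Hsmr}\kappa(\pid_i)\isocan\kappa(\pid_i)$. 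Write $V(\pid_i)\eqdef\Ts\tenseur_{\Hsmr}\kappa(\pid_i)$, a two-dimensional $\kappa(\pid_i)[G_{\Q,\Si}]$-representation; since $\Ts$ is $\Hsmr$-free one has $\RGamma_{\et}(\Z[1/\Si],\Ts)\Ltenseur_{\Hsmr}\kappa(\pid_i)\simeq\RGamma_{\et}(\Z[1/\Si],V(\pid_i))$.

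First I would set up the cone. The module $H^{0}_{\et}(\Z[1/\Si],\Ts)=\Ts^{G_{\Q,\Si}}$ is a submodule of the free module $\Ts$, hence $\Hsmr$-torsion-free; and every $V(\pid_i)$ is absolutely irreducible, because the reducibility locus of the pseudo-character of $\Ts$ is Zariski-closed (\cite{BellaicheChenevier}) and contains no minimal prime — motivic points are Zariski-dense in $\Spec\Hsmr$ and the Galois representation at any motivic point is absolutely irreducible, being residually isomorphic to the absolutely irreducible $\rhobar$. Hence $H^{0}_{\et}(\Z[1/\Si],V(\pid_i))=0$ for all $i$, so $H^{0}_{\et}(\Z[1/\Si],\Ts)=0$, $\tau_{\le 1}\RGamma_{\et}(\Z[1/\Si],\Ts)\simeq\Hun_{\et}(\Z[1/\Si],\Ts)[-1]$, and $\zs$ lifts to a morphism of perfect complexes $\zs\colon\Ts(-1)^{+}[-1]\fleche\RGamma_{\et}(\Z[1/\Si],\Ts)$ inducing on $H^{1}$ the zeta morphism of Theorem~\ref{TheoNakamura}. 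Writing $C\eqdef\Cone(\zs)$ and using multiplicativity of the determinant in distinguished triangles together with $\Det^{-1}_{\Hsmr}(\Ts(-1)^{+}[-1])=\Det_{\Hsmr}\Ts(-1)^{+}$, one gets a canonical isomorphism $\Ds\isocan\Det^{-1}_{\Hsmr}C$. The long exact sequence of the triangle $\Ts(-1)^{+}[-1]\to\RGamma_{\et}(\Z[1/\Si],\Ts)\to C$, together with $H^{0}_{\et}(\Z[1/\Si],\Ts)=0$ and the vanishing of $H^{\ge 3}_{\et}$, gives $H^{0}(C)=\Ker\zs$, $H^{1}(C)=\coker\zs$, $H^{2}(C)=H^{2}_{\et}(\Z[1/\Si],\Ts)$ and $H^{\ge 3}(C)=0$. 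Since the determinant of an acyclic perfect complex is canonically trivial and $Q(\Hsmr)$ is flat over $\Hsmr$, the proposition is therefore equivalent to the statement that $\Ker\zs$, $\coker\zs$ and $H^{2}_{\et}(\Z[1/\Si],\Ts)$ are all $\Hsmr$-torsion; equivalently, for each $i$: $\zs\tenseur\kappa(\pid_i)\neq0$ (noting that $\Ts(-1)^{+}$ is $\Hsmr$-free of rank one, $p$ and $\rhobar$ being odd, so $\Ker\zs$ is torsion exactly when $\zs\tenseur\kappa(\pid_i)$ is injective, i.e. nonzero), $H^{1}_{\et}(\Z[1/\Si],\Ts)\tenseur\kappa(\pid_i)$ is one-dimensional, and $H^{2}_{\et}(\Z[1/\Si],\Ts)\tenseur\kappa(\pid_i)=0$. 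The middle condition follows from the last one: if $H^{2}_{\et}(\Z[1/\Si],\Ts)$ is killed by localisation at $\pid_i$, then $\Tor^{\Hsmr}_{*}(H^{q}_{\et}(\Z[1/\Si],\Ts),\kappa(\pid_i))=0$ for $q\ge2$, the base-change spectral sequence collapses to give $H^{j}_{\et}(\Z[1/\Si],V(\pid_i))=H^{j}_{\et}(\Z[1/\Si],\Ts)\tenseur\kappa(\pid_i)$ for all $j$, and then $H^{0}=H^{2}=0$ together with the global Euler characteristic formula (whose archimedean term is $-1$ since $\rhobar$ is odd) force $\dim_{\kappa(\pid_i)}H^{1}_{\et}(\Z[1/\Si],V(\pid_i))=1$.

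It remains to prove, for each $i$, that $\zs\tenseur\kappa(\pid_i)\neq0$ and $H^{2}_{\et}(\Z[1/\Si],\Ts)\tenseur\kappa(\pid_i)=0$, and both follow by descent to strictly critical motivic points. Let $\la\in\Spec^{\mot}\Hsmr$ be a motivic point with $\Wcal(\la)=\Wcal(f\tenseur\chi)(r)$ strictly critical. By Theorem~\ref{TheoZetaMorphism} (compatibility being Theorem~\ref{TheoNakamura}) the descent of $\zs$ along $\la$ is the isomorphism $\z_{\Si}(f\tenseur\chi)(r)\colon V(r-1)^{+}_{\chi,\pid}\isom\Hun_{\et}(\Z[1/\Si],V(r)_{\chi,\pid})$ of one-dimensional $E_{\chi}$-vector spaces; since $H^{0}_{\et}(\Z[1/\Si],V(\la))=0$, the Euler characteristic formula gives $H^{2}_{\et}(\Z[1/\Si],V(\la))=0$, hence by base change $H^{2}_{\et}(\Z[1/\Si],\Ts)\tenseur\kappa(\la)\simeq H^{2}_{\et}(\Z[1/\Si],V(\la))=0$, so $\la$ lies outside the support of the finitely generated $\Hsmr$-module $H^{2}_{\et}(\Z[1/\Si],\Ts)$; moreover the image of the zeta element $z_{0}\eqdef\zs(v)$, for $v$ an $\Hsmr$-basis of $\Ts(-1)^{+}$, is a basis of $\Hun_{\et}(\Z[1/\Si],V(\la))$ and in particular nonzero. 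Now if some minimal prime $\pid_i$ lay in the support of $H^{2}_{\et}(\Z[1/\Si],\Ts)$, this support — being Zariski-closed and containing the generic point $\pid_i$ of the irreducible component $V(\pid_i)$ — would contain all of $V(\pid_i)$; but $V(\pid_i)$, being an irreducible component, is stable under twisting by residually trivial characters of $G_{\Q,\Si}$, so from any motivic point of $V(\pid_i)$ of weight $\ge 2$, a suitable cyclotomic Tate twist into the critical range together with \cite{Rohrlich} and \cite[Theorems~12.4, 12.5]{KatoEuler} produce a strictly critical motivic point lying on $V(\pid_i)$, a contradiction. Likewise, were $\zs\tenseur\kappa(\pid_i)$ zero, the image of $z_{0}$ in $\Hun_{\et}(\Z[1/\Si],\Ts)\tenseur_{\Hsmr}\Hsmr/\pid_i$ would be torsion over the domain $\Hsmr/\pid_i$, hence would vanish at every motivic point of $V(\pid_i)$ outside a proper closed subset — contradicting that $z_{0}$ is nonzero at the Zariski-dense set of strictly critical motivic points of $V(\pid_i)$. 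This yields $\zs\tenseur\kappa(\pid_i)\neq0$ and $H^{2}_{\et}(\Z[1/\Si],\Ts)\tenseur\kappa(\pid_i)=0$ for all $i$, hence \eqref{EqZetaElementSigma}.

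The main obstacle is the torsion-ness of $H^{2}_{\et}(\Z[1/\Si],\Ts)$: it rests on Kato's Euler system bound — hence, through strict criticality, on \cite{Rohrlich} — applied at a single motivic point of each irreducible component of $\Spec\Hsmr$, and therefore on the not entirely formal fact that strictly critical motivic points are Zariski-dense in every component, where the stability of components under the twisting action is used. The remaining ingredients (the identification $\Ds\isocan\Det^{-1}_{\Hsmr}C$, the base-change collapse, and the reduction to generic fibres) are formal.
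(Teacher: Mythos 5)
Your proof follows the same overall strategy as the paper's: show that $H^{0}_{\et}$ and $H^{2}_{\et}$ of $\Ts$ as well as the cokernel (and kernel) of $\zs$ are torsion $\Hsmr$-modules, so that the cone of $\zs$ becomes acyclic over $Q(\Hsmr)$, and then descend to a specialization where Kato's results supply the torsion-ness. The main difference is the choice of specialization. You descend all the way to strictly critical motivic points with coefficients in $E_{\chi}$, whereas the paper descends only to the Iwasawa specialization $T(\psi)_{\Iw}$ with coefficients in $\Oiwa$ and invokes \cite[Theorem 12.4]{KatoEuler} there. Your version is logically finer but also more costly: you need a Zariski-dense set of strictly critical motivic points on each irreducible component, while the paper only needs a single motivic point per component. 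This is where your argument has a soft spot. You justify density by asserting that an irreducible component ``being an irreducible component, is stable under twisting by residually trivial characters of $G_{\Q,\Si}$.'' That is not a tautology: the cyclotomic twist action does give automorphisms of $\Hsmr\simeq\Rsr$, so it permutes the minimal primes, but that it fixes each one is not automatic from irreducibility. The standard and more robust way around this (which also underlies the paper's argument, though it is left implicit there) is to choose a motivic point $\psi$ lying on $V(\pid_i)$ and on \emph{no other} irreducible component (possible since intersections of distinct components form a proper closed subset and motivic points are Zariski-dense); then $\ker\psi$ has $\pid_i$ as its unique minimal prime below, so $\ker\psi_{\Iw}\subseteq\ker\psi$ forces $\pid_i\subseteq\ker\psi_{\Iw}$, and Kato's theorem over $\Oiwa$ immediately gives $M_{\pid_i}=0$ for each of the three modules by flat base change along $\kappa(\pid_i)\plonge\Frac(\Oiwa)$ plus Nakayama. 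I would recommend replacing the twisting-stability claim by this argument.

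Two smaller remarks. The detour through \cite{BellaicheChenevier} and absolute irreducibility of $V(\pid_i)$ to get $H^{0}_{\et}(\Z[1/\Si],\Ts)=0$ is more than is needed: since $\rhobar$ has image containing $\SL_{2}(\fp)$, already $H^{0}(G_{\Q,\Si},\Tred)=0$, and topological Nakayama for the compact $\Hsmr$-module $\Ts$ gives $H^{0}(G_{\Q,\Si},\Ts)=0$. Also, the assertion that the base change spectral sequence ``collapses because $H^{2}$ is killed by localisation at $\pid_i$'' is misleading; it collapses simply because $\kappa(\pid_i)=(\Hsmr)_{\pid_i}$ is a localization and hence flat over $\Hsmr$, so $H^{j}_{\et}(\Z[1/\Si],\Ts)\tenseur\kappa(\pid_i)\simeq H^{j}_{\et}(\Z[1/\Si],V(\pid_i))$ for every $j$ without any extra hypothesis. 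The global Euler characteristic computation that then gives $\dim H^{1}=1$ once $H^{0}=H^{2}=0$ is fine.
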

\begin{proof}
We show that $H^{2}_{\et}\left(\Z[1/\Si],\Ts\right)$ and $H^{1}_{\et}\left(\Z[1/\Si],\Ts\right)/\image\zs$ are torsion $\Hsmr$-modules and that $H^{i}_{\et}\left(\Z[1/\Si],\Ts\right)$ vanishes if $i\neq1,2$. 

The cohomology of the complex $\RGamma_{\et}\left(\Z[1/\Si],\Ts\right)$ is concentrated in degree $[0,3]$. Let $\la:\Hsmr\fleche\Ocal$ be a motivic point. Under assumptions \ref{HypMain}, the cohomology of the complex $\RGamma_{\et}\left(\Z[1/\Si],T(\la)_{\Iw}\right)$ is concentrated in degree $[1,2]$,  $H^{2}_{\et}(\Z[1/\Si],T(\la)_{\Iw})$ is a torsion $\Oiwa$-module and $H^{1}_{\et}(\Z[1/\Si],T(\la)_{\Iw})$ is a free $\Oiwa$-module of rank 1. The canonical isomorphism
\begin{equation}\nonumber
\RGamma_{\et}\left(\Z[1/\Si],\Ts\right)\Ltenseur_{\Hsmr,\la}\Oiwa\isocan\RGamma_{\et}\left(\Z[1/\Si],T(\la)_{\Iw}\right)
\end{equation}
and Nakayama's lemma show first that the cohomology of $\RGamma_{\et}\left(\Z[1/p],\Ts\right)$ is concentrated in degree $[1,2]$, then that $H^{2}_{\et}\left(\Z[1/\Si],\Ts\right)$ is torsion and finally that $H^{1}_{\et}\left(\Z[1/\Si],\Ts\right)$ is generated by at most one element over $Q(\Hsmr)$. In order to show that the $\Hsmr$-module $H^{1}_{\et}\left(\Z[1/\Si],\Ts\right)/\image\zs$ is torsion, it is consequently sufficient to do so after base-changing to each irreducible component of $\Hsmr$. Another application of Nakayama's lemma shows that $H^{1}_{\et}\left(\Z[1/\Si],\Ts\right)/\image\zs$ is torsion after base-change to an irreducible component of $\Hsmr$ if there exists a motivic point $\psi$ of $\Hsmr$ factoring through this irreducible component and such that
\begin{equation}\nonumber
H^{1}_{\et}\left(\Z[1/\Si],T(\psi)_{\Iw}\right)/\image\z(\psi)_{\Iw}
\end{equation}
is torsion. This holds for any motivic point by \cite[Theorem 12.4]{KatoEuler}.

After extension of scalars to $Q(\Hsmr)$, the fundamental line $\Ds$ is thus canonically isomorphic to the determinant functor of the zero complex, and hence canonically isomorphic to $Q(\Hsmr)$.
\end{proof}
Proposition \ref{PropTorsion} shows that the following conjecture is at least meaningful.
\begin{Conj}[Universal Iwasawa Main Conjecture]\label{ConjUniv}
The universal zeta morphism induces an isomorphism
\begin{equation}\nonumber
\zs:\Ds\isom\Hsmr.
\end{equation}
\end{Conj}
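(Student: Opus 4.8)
The plan is to prove Conjecture \ref{ConjUniv} conditionally on the classical Iwasawa Main Conjecture (Conjecture \ref{ConjIMC}) holding at a \emph{single} motivic point of $\Hsmr$ --- the natural minimal input in view of the equivalences of Theorem \ref{TheoIntro}, whose proof this argument is meant to supply --- and to extract along the way that Conjecture \ref{ConjIMC} then holds at \emph{every} motivic point, together with Conjecture \ref{ConjTNC} at the points where the relevant $L$-value is non-zero. By Proposition \ref{PropTorsion} the universal zeta morphism $\zs\colon\Ds\to\Hsmr$ is a non-zero-divisor, so the conjecture asserts that it is a unit. Since $\Ds$ is free of rank one over $\Hsmr$ and its formation commutes with arbitrary base change, and since $\Hsmr$ is reduced, this integral statement is an equality of characteristic ideals of $H^{2}_{\et}(\Z[1/\Si],\Ts)$ and $H^{1}_{\et}(\Z[1/\Si],\Ts)/\image\zs$ that may be checked after localising at the height-one primes of $\Hsmr$ (passing to the normalisation of its irreducible components where needed). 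Kato's Euler system (Theorem \ref{TheoZetaMorphism}), specialised at the Zariski-dense set of motivic points and strengthened as in Corollary \ref{CorKatoStronger}, supplies one of the two divisibilities --- the integrality $\zs(\Ds)\subseteq\Hsmr$ --- so the content is the reverse divisibility.

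To establish the reverse divisibility I would first treat the smooth locus $\Spec^{\sm}\Hsmr$, on which $\Hsmr$ is regular and the method of Euler systems applies directly. Following Rubin's principle, the discrepancy between the two characteristic ideals, restricted to the smooth locus, is \emph{bounded}, indeed locally constant in the resulting flat family of cohomology modules; hence it vanishes identically on each connected component of $\Spec^{\sm}\Hsmr$ as soon as it vanishes at one motivic point lying on that component. The classical Iwasawa Main Conjecture at the chosen motivic point $\la_{0}$ provides exactly such a vanishing: transporting it along the specialisation $\la_{0}$ of $\Hsmr$ by means of Nakamura's compatibility (Theorem \ref{TheoNakamura}, diagram \eqref{DiagNaka}) and the base-change compatibility of the universal fundamental line $\Ds$ identifies the restriction of $\zs$ at $\la_{0}$ with the classical zeta morphism $\zs(\la_{0})_{\Iw}$, which by hypothesis is an isomorphism. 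To make $\la_{0}$ meet every component of the smooth locus one invokes the strengthened congruence results of Corollary \ref{CorEPW} (of Emerton--Pollack--Weston type), which propagate the classical Main Conjecture between congruent motivic points.

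It remains to handle the non-smooth locus of $\Spec\Hsmr$, where the Euler/Kolyvagin system machinery cannot be run directly. Here the plan is to resolve the singularity by a Taylor--Wiles--Kisin system: Lemma \ref{LemTW} and Proposition \ref{PropTWgeneral} produce a regular (formal power series) ring $R_{\infty}$ carrying a deformation $\Ts_{\infty}$ of $\rhobar$ and a zeta morphism $\zs^{\infty}$ over $R_{\infty}$ that is compatible, up to the usual patching indeterminacy, with $\zs$ under a surjection $R_{\infty}\twoheadrightarrow\Hsmr$ obtained by killing the patching variables, so that $\zs^{\infty}\tenseur_{R_{\infty}}\Hsmr=\zs$. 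Over the regular ring $R_{\infty}$ the argument of the previous paragraph applies and yields $\zs^{\infty}(\Ds^{\infty})=R_{\infty}$; the approximate descent of Lemma \ref{LemDescendBis} then shows that every prime of $\Hsmr$ in the non-smooth locus is $\mgot_{\rhobar}$-adically close enough to the image of a prime of $\Spec R_{\infty}$ for the equality $\zs(\Ds)_{\pid}=\Hsmr_{\pid}$ to be deduced in codimension one. Combined with the smooth-locus case this proves Conjecture \ref{ConjUniv}. The converse implications required by Theorem \ref{TheoIntro} --- that Conjecture \ref{ConjUniv} forces Conjecture \ref{ConjIMC} at each motivic point, and forces Conjecture \ref{ConjTNC} there when $L(f,\chi,r)\neq0$ --- follow formally by base change along motivic points via diagram \eqref{DiagNaka} and descent to finite level and to $\Ocal_{\chi}$.

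The principal obstacle, and the crux of the whole proof, is the compatibility of Taylor--Wiles--Kisin patching with zeta morphisms. The zeta morphism is an arithmetic object built from Beilinson--Kato elements via Nakamura's construction over the tower of Hecke algebras $\Hecke(U_{p}U^{(p)})_{\mgot_{\rhobar}}$, whereas $R_{\infty}$ arises from a compactness argument involving many non-canonical choices; one must therefore arrange the Taylor--Wiles--Kisin system so that the zeta morphisms over its layers form a system compatible enough to pass to the limit, and verify that the limiting $\zs^{\infty}$ genuinely specialises to $\zs$ --- this is precisely the ``ersatz'' in the approximate descent. Subordinate difficulties are making the $\mgot_{\rhobar}$-adic approximation in that descent effective enough to conclude in codimension one in spite of the failure of exactness, and propagating the single motivic input $\la_{0}$ through every component of the smooth locus by the congruence arguments of Corollary \ref{CorEPW}.
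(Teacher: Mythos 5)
Your overall aim --- proving Conjecture~\ref{ConjUniv} conditionally on the classical Iwasawa Main Conjecture at a single motivic point --- is precisely the content of Theorem~\ref{TheoCorps}, and you correctly identify Kato's Euler system, Taylor-Wiles-Kisin patching, and base change at motivic points as the key ingredients. But your architecture runs in the wrong direction. You claim the integrality $\zs(\Ds^{-1})\subseteq\Hsmr$ follows from Kato's theorem specialised at motivic points and strengthened ``as in Corollary~\ref{CorKatoStronger}''. This is circular: Corollary~\ref{CorKatoStronger} is itself \emph{deduced} from that inclusion, which is the first, unconditional, assertion of Theorem~\ref{TheoCorps}. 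More seriously, a divisibility of characteristic ideals over the non-regular ring $\Hsmr$ cannot simply be assembled from its validity at a Zariski-dense set of regular specialisations --- that obstruction is exactly why the integrality is the hard step and is what the entire Taylor-Wiles-Kisin section of the paper is devoted to. Once integrality $\zs(\Ds^{-1})=x\Hsmr$ is known, the equality is the easy part: a single motivic point at which $\la(x)$ is a unit forces $x$ itself to be a unit. Your ``locally constant discrepancy on the smooth locus'' argument has no analogue in the paper and is not sound as stated; the paper never decomposes $\Spec\Hsmr$ into smooth and non-smooth loci.

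Your account of the patching is also not what the paper does, and as written would not work. You propose constructing a Galois representation and a zeta morphism $\zs^{\infty}$ over the patched ring $R^{\square}_{\infty}$, running Kato there, and descending the resulting isomorphism back to $\Hsmr$ via Lemma~\ref{LemDescendBis}. The paper never builds a zeta morphism or a deformation over $R^{\square}_{\infty}$; this is a notoriously delicate open problem that the argument deliberately avoids. Instead it proceeds by contradiction: assuming the integrality fails, Lemma~\ref{LemRemonteBis} propagates the failure along the patched system to a trivialization of $(\Delta^{\square}_{\infty})^{-1}$ by $[y_{\infty}R^{\square}_{\infty}\to x_{\infty}R^{\square}_{\infty}]$ with $x_{\infty}/y_{\infty}\notin R^{\square}_{\infty}$; Lemma~\ref{LemDescendBis} then exploits the regularity of the irreducible components of $R^{\square}_{\infty}$ (Proposition~\ref{PropTWgeneral}, Lemma~\ref{LemTW}) to produce not a map back to $\Hsmr$ but a \emph{further} specialisation $\psi\colon\Hsmprime\to S_{\Iw}$, $S$ a discrete valuation ring, for which the failure persists and for which the hypotheses of Kato's Euler system theorem for $T_{\psi}/XT_{\psi}$ hold. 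Kato's theorem then gives the inclusion $\zs(\psi)\colon\Ds(T_{\psi})^{-1}\hookrightarrow S_{\Iw}$, contradicting the construction of $\psi$. Thus Lemma~\ref{LemDescendBis} ships a hypothetical failure down to a regular testing ground to produce a contradiction; it is not a mechanism for lifting a positive result from $R^{\square}_{\infty}$ to $\Hsmr$ ``in codimension one.''
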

A variant of conjecture \ref{ConjUniv} is formulated in \cite[Conjecture 3.2.2]{KatoViaBdR}, but in the absence of a specific candidate for $\zs$.
\subsection{Compatibility of the ETNC at \Iwagood points}
Let $\la:\Hsmr\fleche A$ be a local ring morphism with values in a reduced ring $A$ and let us write $T_{\la}$ for $\Ts\tenseur_{\Hsmr,\la}A$. We say that $T_{\la}$ is \Iwagood if the complex
\begin{equation}\nonumber
\Cone\left(T_{\la}(-1)^{+}[-1]\fleche\RGamma_{\et}(\Z[1/\Si],T_{\la})\right)
\end{equation}
built out of the morphism 
\begin{equation}\nonumber
\zs(\la):T_{\la}(-1)^{+}\fleche\Hun_{\et}(\Z[1/\Si],T_{\la})
\end{equation}
induced from $\zs$ is acyclic after extension of scalars to the total ring of quotient $Q(A)$ of $A$. Proposition \ref{PropTorsion} shows that the identity is \Iwagood and the main results of \cite{KatoEuler} imply that $\la(f)_{\Iw}:\Hsmr\fleche\Oiwa$ given by the system of eigenvalues of a classical eigencuspform is Iwasawa-suitable.
\begin{Lem}\label{LemIwaGood}
Let $\la:\Hsmr\fleche A$ be a quotient map. Suppose that for each minimal prime $\aid\in\Spec A$, there is a domain $B$ and a commutative diagram 
\begin{equation}\nonumber
\xymatrix{
\Hsmr\ar[r]\ar[d]_{\la\modulo\aid}& B\\
A/\aid\ar[ru]_{\psi(\aid)}
}
\end{equation}
with $\psi(\aid)$ an \Iwagood morphism, then $\la$ is also Iwasawa-suitable.
\end{Lem}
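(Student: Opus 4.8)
The plan is to verify the acyclicity defining the Iwasawa-suitable condition one irreducible component of $\Spec A$ at a time, and, on each component, to propagate acyclicity from the single point cut out by the given $\psi(\aid)$ to the generic point, using that the acyclicity locus of a perfect complex is open. First I would fix notation and carry out the reductions. Write $C(\nu)$, for any local morphism $\nu$ out of $\Hsmr$ to a reduced ring, for the cone $\Cone(T_{\nu}(-1)^{+}[-1]\fleche\RGamma_{\et}(\Z[1/\Si],T_{\nu}))$ attached to $\zs(\nu)$, so that $T_{\la}$ is Iwasawa-suitable exactly when $C(\la)\tenseur_{A}Q(A)$ is acyclic. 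Since $A$ is reduced (as required by the definition of that notion) and noetherian, $Q(A)$ is the finite product $\produit{\aid}{}\Frac(A/\aid)$ over the minimal primes $\aid$ of $A$, and $Q(A)$ is flat over $A$; hence it is equivalent to ask that $C(\la)\tenseur_{A}\Frac(A/\aid)$ be acyclic for every minimal $\aid$. Moreover $C(\la)$ is a perfect complex of $A$-modules whose formation commutes with base change of coefficients: $T_{\la}$ is $A$-free (as $\Ts$ is $\Hsmr$-free), the functor $(-)^{+}$ is exact and commutes with base change since $2$ is invertible, the functor $\RGamma_{\et}(\Z[1/\Si],-)$ preserves perfection and commutes with arbitrary base change of coefficients, and the $\zs(\nu)$ are the base changes of the single zeta morphism $\zs$ over $\Hsmr$. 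Therefore $C(\la)\tenseur_{A}\Frac(A/\aid)\isocan C(\la\modulo\aid)\tenseur_{A/\aid}\Frac(A/\aid)$, and the task reduces to showing that, for each minimal $\aid$, the perfect complex $C(\la\modulo\aid)$ on the integral scheme $\Spec(A/\aid)$ is acyclic at its generic point.

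Then I would feed in the hypothesis. Fix a minimal prime $\aid$ with the domain $B$ and the morphism $\psi(\aid)$ it provides, and let $\mu\colon\Hsmr\fleche B$ be the common composite $\psi(\aid)\circ(\la\modulo\aid)$; by hypothesis $\mu$ is Iwasawa-suitable, i.e.\ $C(\mu)\tenseur_{B}\Frac(B)$ is acyclic. Base change along $\psi(\aid)$ identifies $C(\mu)$ with $C(\la\modulo\aid)\Ltenseur_{A/\aid}B$, so that $C(\la\modulo\aid)\Ltenseur_{A/\aid}\Frac(B)$ is acyclic. Putting $\pid_{0}=\Ker\psi(\aid)\in\Spec(A/\aid)$ and factoring $A/\aid\fleche\Frac(B)$ through the residue field $\kappa(\pid_{0})\plonge\Frac(B)$, the complex $C(\la\modulo\aid)\Ltenseur_{A/\aid}\kappa(\pid_{0})$ is a bounded complex of $\kappa(\pid_{0})$-vector spaces which becomes acyclic after the field extension $\kappa(\pid_{0})\plonge\Frac(B)$, hence is itself acyclic; in other words $C(\la\modulo\aid)$ is acyclic at the point $\pid_{0}$. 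Now the support of a perfect complex --- equivalently, by Nakayama's lemma for complexes, the set of $\qid$ with $C(\la\modulo\aid)\Ltenseur_{A/\aid}\kappa(\qid)\neq0$ --- is a closed subset of $\Spec(A/\aid)$, so the acyclicity locus is open; it is non-empty, since it contains $\pid_{0}$, and $\Spec(A/\aid)$ is irreducible, so it contains the generic point. Hence $C(\la\modulo\aid)\tenseur_{A/\aid}\Frac(A/\aid)$ is acyclic for each minimal $\aid$, whence $C(\la)\tenseur_{A}Q(A)$ is acyclic, i.e.\ $\la$ is Iwasawa-suitable.

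The single substantive point, which I would treat with care, is the last one: the openness of the acyclicity locus of a perfect complex and the resulting passage from the point $\pid_{0}$ to the generic point of $\Spec(A/\aid)$. This is exactly what lets the lemma go through with no injectivity (equivalently, dominance) requirement on $\psi(\aid)$: irreducibility of $\Spec(A/\aid)$ alone suffices to drag acyclicity at a single point to the generic point. Everything else --- the decomposition of $Q(A)$ over minimal primes, the base-change compatibilities of $\RGamma_{\et}$, of $(-)^{+}$, of $\Cone$ and of $\zs$, and the perfectness of $C(\la)$ --- is formal once $C(\la)$ has been recognized as a perfect complex whose formation commutes with base change.
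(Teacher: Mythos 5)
Your proof is correct and takes essentially the same route as the paper: reduce, via the decomposition $Q(A)\simeq\prod_{\aid}\Frac(A/\aid)$, to each domain $A/\aid$, then propagate acyclicity of the cone $C(\la\modulo\aid)$ from a single known point (the one supplied by $\psi(\aid)$) to the generic point of the irreducible scheme $\Spec(A/\aid)$, using that the cone is a perfect complex whose formation commutes with base change. The paper carries out the propagation degree by degree on $\RGamma_{\et}$ under the label ``Nakayama's lemma'' (in effect, rank semicontinuity for the minimal representative), while you package the same commutative algebra more transparently by passing through $\kappa(\pid_0)$ with $\pid_0=\Ker\psi(\aid)$ and invoking the closedness of the support (openness of the acyclicity locus) of the cone complex directly.
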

\begin{proof}
Let us first assume that $A$ is a domain and write $\psi$ for $\psi(\aid)$. The canonical isomorphism
\begin{equation}\nonumber
\RGamma_{\et}\left(\Z[1/\Si],\Tla\right)\Ltenseur_{A,\psi}B\isocan\RGamma_{\et}\left(\Z[1/\Si],\Tpsi\right)
\end{equation}
and Nakayama's lemma show first that the cohomology of $\RGamma_{\et}\left(\Z[1/\Si],\Tla\right)$ is concentrated in degree $[1,2]$, then that  $H^{2}_{\et}\left(\Z[1/\Si],\Tla\right)$ is torsion and finally that $H^{1}_{\et}\left(\Z[1/\Si],\Tla\right)$ has $A$-rank at most one. As $\image\z(\psi)$ is a non-zero $B$-module, another application of Nakayama's lemma shows that $H^{1}_{\et}\left(\Z[1/\Si],\Tla\right)/\image\z(\la)$ is $A$-torsion. Hence, $\la$ is Iwasawa-suitable.

Returning to the general case, the first part of the proof shows that for all minimal prime $\aid\in\Spec A$, $\la\modulo\aid$ is Iwasawa-suitable. As $Q(A)$ is a semi-local ring whose maximal primes are in bijections with the minimal primes of $A$, $\la$ is Iwasawa-suitable.
\end{proof}
The following seemingly formal and easy proposition is the key to the proof of our theorem. As explained in the introduction, it can be interpreted as an optimal version of Greenberg's Control Theorem (\cite{GreenbergControl,OchiaiMainConjecture,FouquetOchiai,FouquetX}). It also appears to be false in general and very hard to prove even with supplementary assumptions if fundamental lines are replaced by characteristic ideals of Selmer groups (in particular, our crucial reliance on this proposition justifies the use of fundamental lines rather than the more concrete and usual Selmer modules).
\begin{Prop}\label{PropDetComSigma}
Let
\begin{equation}\nonumber
\xymatrix{
\Hsmr\ar[r]\ar[d]_{\la}& B\\
A\ar[ru]_{\psi}
}
\end{equation}
be a commutative diagram of Iwasawa-suitable ring-morphisms with $B$ a domain. We define
\begin{equation}\nonumber
\Delta_{\Si}(T_{\la})=\Det^{-1}_{A}\RGamma_{\et}\left(\Z[1/\Si],T_{\la}\right)\tenseur_{A}\Det^{-1}_{A}T_{\la}(-1)^{+}
\end{equation}
and 
\begin{equation}\nonumber
\Delta_{\Si}(\Tpsi)=\Det^{-1}_{B}\RGamma_{\et}\left(\Z[1/\Si],\Tpsi\right)\tenseur_{B}\Det^{-1}_{B}\Tpsi(-1)^{+}.
\end{equation}
Then it is possible to choose $(x,y)\in A^{2}$ such that the image of the zeta morphism $\zs(\la)$ is equal to $\frac{x}{y}A$ and $(x',y')\in B^{2}$ such that the image of the zeta morphism $\zs(\psi)$ is equal to $\frac{x'}{y'}B$ so that the natural map
\begin{equation}\nonumber
\Delta_{\Si}(T_{\la})\tenseur_{A,\psi}B{\fleche}\Delta_{\Si}(\Tpsi)
\end{equation}
is an isomorphism which fits into a commutative diagram
\begin{equation}\nonumber
\xymatrix{
\Delta_{\Si}(T_{\la})\ar[d]_{-\tenseur_{A,\psi}B}\ar[rr]^{{\zs(\la)}}&&\frac{x}{y}A\ar[d]^{\psi}\\
\Delta_{\Si}(\Tpsi)\ar[rr]^(0.55){{\zs(\psi)}}&&\frac{x'}{y'}B
}.
\end{equation}
In particular, the morphism $\psi$ extends to a morphism $\frac{x}{y}A\fleche\frac{x'}{y'}B$.
\end{Prop}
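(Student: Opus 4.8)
The plan is to express both horizontal arrows of the diagram through the canonical trivialisation of the determinant of a perfect complex which becomes acyclic after inverting the non-zero-divisors, and then to transport that trivialisation along $\psi$ by factoring it through an \emph{integral} localisation $A\to A[1/c]$ to which $\psi$ still extends.

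First I would set $X_A\eqdef\Cone\bigl(\zs(\la)\colon T_\la(-1)^+[-1]\fleche\RGamma_\et(\Z[1/\Si],T_\la)\bigr)$ and likewise $X_B\eqdef\Cone(\zs(\psi))$. Because $\RGamma_\et(\Z[1/\Si],-)$, the functor $(-)^+$ (exact since $2$ is invertible) and $\Det$ all commute with arbitrary base change, and because $\zs(\psi)=\zs(\la)\Ltenseur_{A,\psi}B$ --- both being deduced from the universal zeta morphism of Theorem~\ref{TheoNakamura} --- one obtains $X_B\isocan X_A\Ltenseur_{A,\psi}B$, the identification $\Det_A(X_A)\isocan\Delta_{\Si}(T_\la)^{-1}$ furnished by the defining triangle, and the natural base-change isomorphism $\Delta_{\Si}(T_\la)\tenseur_{A,\psi}B\isocan\Delta_{\Si}(\Tpsi)$ asserted in the statement. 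Taking $A$ and $B$ local, as they are in all applications (so that the fundamental lines are free of rank one), fix a basis $e_A$ of $\Delta_{\Si}(T_\la)$ and let $e_B$ be its image, a basis of $\Delta_{\Si}(\Tpsi)$. That $\la$ is Iwasawa-suitable means $X_A\tenseur_A Q(A)$ is acyclic; hence the $H^i(X_A)$ are finitely generated torsion $A$-modules, and the determinant of the acyclic complex $X_A\tenseur Q(A)$, composed with the identification above, is precisely the isomorphism $\zs(\la)\colon\Delta_{\Si}(T_\la)\tenseur_A Q(A)\isocan Q(A)$ of which the top arrow of the diagram is the integral part; similarly over $Q(B)$ for $\psi$.

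The crux, and the step I expect to be the main obstacle, is to produce a non-zero-divisor $c\in A$ annihilating every $H^i(X_A)$ and with $\psi(c)\neq0$ (hence $\psi(c)$ a non-zero-divisor, $B$ being a domain). Let $I=\operatorname{Ann}_A\bigl(\bigoplus_i H^i(X_A)\bigr)$ and $\pid=\Ker\psi$, a prime ideal of $A$. Since $\bigoplus_i H^i(X_A)$ vanishes over $Q(A)$ it is supported off every minimal prime, so $I$ is contained in no minimal prime of $A$. If $I$ were contained in $\pid$ we would have $H^*(X_A)_\pid\neq0$; as $X_A$ is perfect over the local ring $A_\pid$, Nakayama's lemma would force $X_A\Ltenseur_A\kappa(\pid)$ to have non-zero cohomology, and since $\kappa(\pid)=\Frac(A/\pid)$ embeds into $Q(B)$ this contradicts the acyclicity of $X_B\tenseur_B Q(B)\isocan X_A\Ltenseur_A Q(B)$ (with $Q(B)$ an $A$-algebra through $\psi$) guaranteed by the Iwasawa-suitability of $\psi$. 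So $I$ lies in neither $\pid$ nor any minimal prime, and prime avoidance produces the required $c\in I$.

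Finally I would descend along $A\to A[1/c]$. Since $c$ kills all $H^i(X_A)$, the complex $X_A[1/c]$ is acyclic over $A[1/c]$, and therefore so is its base change $X_A[1/c]\Ltenseur_{A[1/c]}B[1/\psi(c)]\isocan X_B[1/\psi(c)]$; by functoriality of the canonical trivialisation of acyclic perfect complexes the square
\[
\xymatrix{
\Det_A(X_A)\tenseur_A A[1/c]\ar[d]_{-\tenseur_{A,\psi}B}\ar[r]&A[1/c]\ar[d]^{\psi}\\
\Det_B(X_B)\tenseur_B B[1/\psi(c)]\ar[r]&B[1/\psi(c)]
}
\]
whose horizontal arrows are these trivialisations commutes, and it refines the two isomorphisms of the previous step. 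Dualising and evaluating at $e_A$ and $e_B$ gives $\zs(\la)(e_A)=u$ for a unit $u\in A[1/c]^\times$; writing $u=x/c^m$ with $x\in A$, the image of $\zs(\la)$ is $\tfrac{x}{c^m}A$, so $y=c^m$ works, while commutativity of the square yields $\zs(\psi)(e_B)=\psi(u)=\psi(x)/\psi(c)^m$, whence the image of $\zs(\psi)$ is $\tfrac{\psi(x)}{\psi(c)^m}B$ and $(x',y')=(\psi(x),\psi(c)^m)$ works. As $\psi(c)^m$ is a non-zero-divisor of $B$, the partial map induced by $\psi$ is defined on $\tfrac{x}{y}A$ and carries it isomorphically onto $\tfrac{x'}{y'}B$, so the square above is exactly the commutative diagram of the statement. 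The remaining bookkeeping --- the torsion of the $H^i(X_A)$, which follows at once from Iwasawa-suitability as localisation is flat, and the sign of $\Det_A(X_A)\isocan\Delta_{\Si}(T_\la)^{-1}$ --- is routine.
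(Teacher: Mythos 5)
Your proof is correct, and the core mechanism is the same as the paper's: exploit the fact that the cone $X_A$ of $\zs(\la)$ becomes acyclic after a localization that kills nothing across $\psi$, then use the base-change compatibility of $\Det$ and of the zeta morphism to get the commutative square, reading off $(x,y)$ from the resulting unit. The two proofs differ in which localization is taken. The paper localizes $A$ directly at $\pid=\Ker\psi$: Iwasawa-suitability of $\psi$ (the paper actually writes ``of $\la$'' here, which appears to be a slip of the pen, since it is the acyclicity over $\kappa(\pid)\hookrightarrow Q(B)$ that is needed) plus Nakayama over the local ring $A_\pid$ give acyclicity of $X_A\otimes_A A_\pid$ in one stroke, and then the generators $x,y$ are extracted from $A\setminus\pid$. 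You instead invert a single element $c$, manufactured by prime avoidance inside the annihilator ideal $I=\operatorname{Ann}_A(\bigoplus_i H^i(X_A))$, checking $I\not\subseteq\pid$ and $I\not\subseteq\qid$ for each minimal prime $\qid$ so that $c$ can be chosen a non-zero-divisor with $\psi(c)\neq 0$. This buys nothing over the paper's argument and costs the (correct but avoidable) prime-avoidance step; the Nakayama/$\kappa(\pid)$ reasoning you use to prove $I\not\subseteq\pid$ is exactly the paper's Nakayama step, so nothing new is really gained. Conversely, your phrasing of the base-change compatibility $X_B\simeq X_A\Ltenseur_{A,\psi}B$ as an explicit consequence of Theorem~\ref{TheoNakamura} is slightly more spelled-out than the paper's appeal to ``functoriality of $\Det$'', and you are more careful in attributing the acyclicity over $\kappa(\pid)$ to the suitability of $\psi$ rather than of $\la$. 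In short: essentially the same route, with a somewhat more circuitous choice of localization.
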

In the following, proposition \ref{PropDetComSigma} is almost always applied to $\la$ equal to the identity or the projection to an irreducible component and $\psi$ is a motivic point or a point with values in the ring of integers of a finite extension of $\qp$ with good properties.
\begin{proof}
Let $\pid\in\Spec A$ be the kernel of $\psi:A\fleche B$. As localization at $\pid$ is flat, the complex
\begin{equation}\nonumber
\RGamma_{\et}(\Z[1/\Si],T_{\la}\tenseur_{A}A_{\pid})\simeq\RGamma_{\et}(\Z[1/\Si],T_{\la}\tenseur_{A})\Ltenseur_{A}A_{\pid}
\end{equation}
is a perfect complex of $A_{\pid}$ which commutes with $-\Ltenseur_{A_{\pid}}\kg(\pid)$. As the complex 
\begin{equation}\nonumber
\Cone\left((T_{\la}\tenseur_{A}\kg(\pid))(-1)^{+}\overset{\z(\la)}{\fleche}\RGamma_{\et}(\Z[1/\Si],T_{\la}\tenseur_{A}\kg(\pid))\right)
\end{equation}
is acyclic by definition of $\la$ being Iwasawa-suitable, the complex
\begin{equation}\nonumber
\Cone\left((T_{\la}\tenseur_{A}A_{\pid})(-1)^{+}\overset{\z(\la)}{\fleche}\RGamma_{\et}(\Z[1/\Si],T_{\la}\tenseur_{A}A_{\pid})\right)
\end{equation}
is acyclic by Nakayama's lemma. By functoriality of $\Det$, there is a commutative diagram
\begin{equation}\nonumber
\xymatrix{
\Delta_{\Si}(T_{\la})\ar@{^{(}->}[r]\ar[d]_{-\tenseur_{A}B}&\Delta_{A}(T_{\la}\tenseur_{A}A_{\pid})\ar[r]^(0.7){\sim}\ar[d]_{-\tenseur_{A_{\pid}}\kg(\pid)}&A_{\pid}\ar[d]^{-\tenseur_{A_{\pid}}\kg(\pid)}\\
\Delta_{\Si}(\Tpsi)\ar@{^{(}->}[r]&\Delta_{\Si}(\Tpsi\tenseur\kg(\pid))\ar[r]^(0.6){\sim}&\kg(\pid)
}
\end{equation}
whose vertical arrows are induced by $\psi$. In particular, it is possible to choose $(x,y)\in (A\backslash\pid)^{2}$ such that the image $\zs(\la)\left(\Delta_{\Si}(T_{\la})\right)$ of $\Ds(\Tla)$ through $\zs$ is generated inside $Q(A)$ by $x/y$. Then $\psi(x)/\psi(y)$ is well-defined and generates $\zs(\psi)\left(\Delta_{\Si}(\Tpsi)\right)$.
\end{proof}
The following theorem was discussed in the introduction.
\begin{Theo}\label{TheoCompatibility}
Suppose that $\la:\Hsmr\fleche A$ is a quotient map. Assume that each irreducible components of $A$ admits a motivic point and that the Universal Iwasawa Main Conjecture (conjecture \ref{ConjUniv}) holds. Then the isomorphism
\begin{equation}\nonumber
\zs(\la):\Ds\tenseur_{\Hsmr,\la}Q(A)\isom Q(A)
\end{equation}
induces an isomorphism
\begin{equation}\nonumber
\zs(\la):\Ds\tenseur_{\Hsmr,\la}A\isom A.
\end{equation}
In particular, the Universal Iwasawa Main Conjecture (conjecture \ref{ConjUniv}) implies that the Iwasawa Main Conjecture (conjecture \ref{ConjIMC}) holds at all motivic points
 and that there is a commutative diagram
 \begin{equation}\nonumber
\xymatrix{
\Ds\ar[d]_{-\tenseur_{\Hecke_{\mr}^{\Si}}\Oiwa}\ar[rr]^{{\zs}}&&\Hecke_{\mr}^{\Si}\ar[d]^{\lambda(f)_{\Iw}}\\
\Delta_{\Sigma}\left(T(f)_{\Iw}\right)\ar[rr]^(0.55){{\zs(f)_{\Iw}}}&&\Oiwa.
}
\end{equation}
If there is a quotient map $\pi(\ord):\Hsm\fleche\Hecke_{\mr}^{\Si,\ord}$ onto the nearly Hida-Hecke algebra $\Hecke_{\mr}^{\Si,\ord}$ (that is to say if $\rhobar|G_{\qp}$ is reducible), then there is an isomorphism
\begin{equation}\nonumber
\zs(\ord):\Ds^{\ord}\isom\Hecke_{\mr}^{\Si,\ord}
\end{equation}
which fits in the commutative diagram
\begin{equation}\nonumber
\xymatrix{
\Ds\ar[d]_{-\tenseur_{\Hecke_{\mr}^{\Si}}\Hecke_{\mr}^{\Si,\ord}}\ar[rr]^{{\zs}}&&\Hecke_{\mr}^{\Si}\ar[d]^{\pi(\ord)}\\
\Ds^{\ord}\ar[d]_{-\tenseur_{\Hecke_{\mr}^{\Si,\ord}}\Oiwa}\ar[rr]^{{\zs(\ord)}}&&\Hecke_{\mr}^{\Si,\ord}\ar[d]^{\lambda(f)_{\Iw}}\\
\Delta_{\Sigma}\left(T(f)_{\Iw}\right)\ar[rr]^(0.55){{\zs(f)_{\Iw}}}&&\Oiwa.
}
\end{equation}
at all motivic points of the nearly Hida-Hecke algebra.
\end{Theo}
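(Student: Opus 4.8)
Below is how I would go about proving Theorem \ref{TheoCompatibility}.

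The plan is to obtain the theorem from the Universal Iwasawa Main Conjecture (conjecture \ref{ConjUniv}) entirely by base change along $\la$; the two points that need work are that $\la$ must be shown to be Iwasawa-suitable — so that the morphism $\zs(\la):\Ds\tenseur_{\Hsmr,\la}Q(A)\isom Q(A)$ occurring in the statement is defined at all — and that this morphism must then be identified with the base change along $\Hsmr\fleche A$ of the universal trivialization of proposition \ref{PropTorsion}. First I would verify that $\la$ is Iwasawa-suitable: for each minimal prime $\aid$ of $A$, the hypothesis that the component $A/\aid$ carries a motivic point provides a commutative triangle $\Hsmr\fleche A/\aid\overset{\psi(\aid)}{\fleche}S_{\Iw}$ in which $\psi(\aid)$ is the Iwasawa-theoretic family of a motivic point, $S$ is the ring of integers of a finite extension of $\qp$, and the composite $\Hsmr\fleche S_{\Iw}$ is the map $\la(g)_{\Iw}$ of a classical eigencuspform $g$; since Iwasawa-suitability of $\psi(\aid)$ depends only on the base change $\Ts\tenseur_{\Hsmr}S_{\Iw}=T(g)_{\Iw}$, it follows from \cite[Theorem 12.4]{KatoEuler} — which gives that $H^{2}_{\et}(\Z[1/\Si],T(g)_{\Iw})$ is $S_{\Iw}$-torsion, that $H^{1}_{\et}(\Z[1/\Si],T(g)_{\Iw})$ is $S_{\Iw}$-free of rank one, and that $H^{1}_{\et}(\Z[1/\Si],T(g)_{\Iw})/\image\zs(g)_{\Iw}$ is $S_{\Iw}$-torsion — that $\psi(\aid)$ is Iwasawa-suitable, whence lemma \ref{LemIwaGood} shows that $\la$ is Iwasawa-suitable and $\zs(\la):\Ds\tenseur_{\Hsmr,\la}Q(A)\isom Q(A)$ is defined.

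The core of the argument then reproduces the proof of proposition \ref{PropDetComSigma}, applied with the first morphism there equal to $\Id_{\Hsmr}$ and the second equal to $\la$ (the requirement that the target be a domain plays no role in the part needed). Because $\RGamma_{\et}(\Z[1/\Si],-)$ preserves perfect complexes and commutes with derived base change and $\Det$ commutes with base change, the canonical map
\begin{equation}\nonumber
\Ds\tenseur_{\Hsmr,\la}A\fleche\Delta_{\Si}(T_{\la})\eqdef\Det^{-1}_{A}\RGamma_{\et}\left(\Z[1/\Si],T_{\la}\right)\tenseur_{A}\Det^{-1}_{A}T_{\la}(-1)^{+}
\end{equation}
is an isomorphism, and Iwasawa-suitability of $\la$ guarantees that the cone of $\zs(\la):T_{\la}(-1)^{+}\fleche\Hun_{\et}(\Z[1/\Si],T_{\la})$ stays acyclic after inverting the non-zero-divisors of $A$, so that the trivialization $\zs(\la)$ it induces over $Q(A)$ is precisely the base change $\zs\tenseur_{\Hsmr,\la}Q(A)$ of the universal trivialization along $\Hsmr\fleche A\plonge Q(A)$. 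Conjecture \ref{ConjUniv} says that the universal trivialization restricts to an isomorphism of $\Hsmr$-modules $\zs:\Ds\isom\Hsmr$, so its base change $\zs\tenseur_{\Hsmr,\la}A$ is an isomorphism $\Ds\tenseur_{\Hsmr,\la}A\isom A$ whose extension of scalars to $Q(A)$ is $\zs(\la)$. As $\Ds$ is free of rank one over the local ring $\Hsmr$, the module $\Ds\tenseur_{\Hsmr,\la}A=\Delta_{\Si}(T_{\la})$ is free of rank one over $A$, and evaluating both $\zs(\la)$ and $\zs\tenseur_{\Hsmr,\la}A$ on one generator of it shows that they agree and that their common value is a unit of $A$; hence $\zs(\la):\Delta_{\Si}(T_{\la})\isom A$, which is the assertion of the theorem.

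The displayed corollaries follow by specialization. Applied to $\la=\la(f)_{\Iw}:\Hsmr\fleche\Oiwa$ for a motivic point $f$, the statement just proved yields an isomorphism $\Ds\tenseur_{\Hsmr,\la(f)_{\Iw}}\Oiwa\isom\Oiwa$; by theorem \ref{TheoNakamura} the induced zeta morphism is Kato's morphism $\zs(f)_{\Iw}$ of theorem \ref{TheoZetaMorphism}, so this isomorphism is exactly $\zs(f)_{\Iw}:\Delta_{\Si}(T(f)_{\Iw})\isom\Oiwa$, i.e.\ conjecture \ref{ConjIMC}, the accompanying square being the base-change compatibility already recorded in diagram \eqref{DiagNaka}. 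For the nearly-ordinary statement I would first apply the main assertion with $A=\Hecke_{\mr}^{\Si,\ord}$, each of whose irreducible components contains a classical nearly-ordinary — hence motivic — point by Hida theory, obtaining $\zs(\ord):\Ds^{\ord}\isom\Hecke_{\mr}^{\Si,\ord}$, and then compose the two base changes $\Hsmr\fleche\Hecke_{\mr}^{\Si,\ord}\fleche\Oiwa$, invoking theorem \ref{TheoNakamura} once more, to assemble the three-storey commutative diagram.

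I expect the main obstacle to be the identification in the second paragraph of $\zs\tenseur_{\Hsmr,\la}A$ with the trivialization $\zs(\la)$ genuinely induced by the base-changed zeta morphism: it is precisely here that the hypothesis forcing $\la$ to be Iwasawa-suitable is indispensable — without it the relevant cone need not remain acyclic and $\zs(\la)$ is not even defined — and it is the reason that proposition \ref{PropDetComSigma}, rather than a bare base change, is the technical core of the matter.
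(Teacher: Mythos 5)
Your proposal is correct and follows essentially the same route as the paper: the paper's proof simply invokes lemma \ref{LemIwaGood} for Iwasawa-suitability of $\la$, then says the argument is that of proposition \ref{PropDetComSigma} (with the first map the identity and $\psi=\la$) "only much simpler as we may choose $x=y=1$" because conjecture \ref{ConjUniv} gives $\zs:\Ds\isom\Hsmr$ on the nose, and for the Hida-Hecke assertion invokes Hida theory to place a motivic point on each irreducible component of $\Hecke^{\Si,\ord}_{\mr}$ and reruns the first part. You have correctly spelled out those details, including the key point that Iwasawa-suitability is needed for $\zs(\la)$ over $Q(A)$ to be defined and to coincide with the base change of the universal trivialization.
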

\begin{proof}
According to lemma \ref{LemIwaGood}, the specialization $\la$ is Iwasawa-suitable. The proof is then similar to the proof of proposition \ref{PropDetComSigma} for $\la$ equal to the identity and $\psi$ equal to $\la$ only much simpler as we may choose $x=y=1$ by our assumption that conjecture \ref{ConjUniv} holds (in particular, it is not necessary to assume that the kernel of $\psi$ is a prime ideal).

If $f$ is an eigencuspform, the morphism $\la(f)_{\Iw}:\Hsmr\fleche\Oiwa$ is \Iwagood with values in a domain. According to Hida theory (\cite{HidaInventionesOrdinary,HidaNearlyOrdinary}), each irreducible component of $\Hecke_{\mr}^{\Si,\ord}$ contains a point $\la(f)_{\Iw}:\Hecke_{\mr}^{\Si,\ord}\fleche\Oiwa$ attached to the classical Iwasawa theory of a motivic point. If we assume that conjecture \ref{ConjUniv} holds, then the formalism of the first part of the proof applies to $\la(f)_{\Iw}$ and to the projection onto $\Hecke_{\mr}^{\Si,\ord}$.
\end{proof}

\section{The Taylor-Wiles-Kisin machinery}
The aim of this section is to construct a Taylor-Wiles-Kisin system with coefficients in the completed tensor product $B$ of local universal deformation rings (see proposition \ref{PropTWgeneral} below) and to give a sufficient condition for all the irreducible components of $B$ to be regular rings. When this condition holds, it is possible to use the Taylor-Wiles-Kisin system to approximate a resolution of singularities of the Hecke algebra in a way that is compatible with trivialization of fundamental lines. This is used in section \ref{SubGeneral} below to establish the main theorems of the manuscript.
\begin{DefEnglish}[Axioms of Taylor-Wiles-Kisin systems]\label{DefTaylorWilesKisin}
A Taylor-Wiles-Kisin system is a set $\{B,R,M,(R_{n},M_{n})_{n\in\N}\}$ whose elements satisfy the following properties.
\begin{enumerate}
\item\label{ItemBregular} The ring $B$ is a complete, local, flat $\Ocal$-algebra of dimension $d+1$.
\item The ring $R$ is a $B$-algebra and $M$ is a non-zero $R$-module.
\item There exists $(h,j)\in\N^{2}$ such that for all $n\in\N$, there are maps of $\Ocal$-algebras
\begin{equation}\nonumber
\Ocal[[y_{1},\cdots,y_{h+j}]]\fleche R_{n}\fleche R.
\end{equation}
\suspend{enumerate}
Denote by $\Bbf$ the $h+j+1$-dimensional $B$-algebra $B[[x_{1},\cdots,x_{h+j-d}]]$.
\resume{enumerate}
\item\label{ItQuotient} For all $n\in\N$, $R_{n}$ is a quotient of $\Bbf$.
\item\label{ItSurjectionRn} For all $n\in\N$, there is a surjective map of $B$-algebras $R_{n}\fleche R$  whose kernel is the ideal $(y_{1}\cdots,y_{h})R_{n}$.
\item\label{ItemTW} For all $n\in\N$, there is a surjective map of $R_{n}$-modules $M_{n}\fleche M$ whose kernel is $(y_{1}\cdots,y_{h})M_{n}$.
\item\label{ItemFree} For all $n\in\N$, the annihilator $\bid_{n}$ of $M_{n}$ in $\Ocal[[y_{1},\cdots,y_{h+j}]]$ is included in the ideal
\begin{equation}\nonumber
\left((1+y_{1})^{p^{n}}-1,\cdots,(1+y_{h})^{p^{n}}-1\right)
\end{equation}
and $M_{n}$ is a finite $\Ocal[[y_{1},\cdots,y_{h+j}]]/\bid_{n}$-module free of rank $s$.
\end{enumerate}
\end{DefEnglish}
Let $\TW=\{B,R,M,(R_{n},M_{n})_{n\in\N}\}$ be a Taylor-Wiles-Kisin system and let $n$ be an integer. According to property \ref{ItemFree} of definition \ref{DefTaylorWilesKisin}, $M_{n}$ is a free $\Ocal[[y_{1},\cdots,y_{h+j}]]/\bid_{n}$-module. So it has depth at least $h+j$ as $\Ocal[[y_{1},\cdots,y_{h+j}]]/\bid_{n}$-module. According to property \ref{ItemTW}, $M$ is thus of depth at least $j$ as $\Ocal[[y_{h+1},\cdots,y_{h+j}]]$-module and so is a free $\Ocal[[y_{h+1}\cdots,y_{h+j}]]$-module, necessarily of rank $s$.

It follows axiomtically from definition \ref{DefTaylorWilesKisin} that if
\begin{equation}\nonumber
\{B,R,M,(R_{n},M_{n})_{n\in\N}\}
\end{equation}
is a Taylor-Wiles-Kisin with integers $(h,j)$, then for all $j'\geq j$, the system
\begin{equation}\nonumber
\{B',R',M',(R'_{n},M'_{n})_{n\in\N}\}
\end{equation}
with $B'=B[[x_{1},\cdots,x_{j'-j}]]$, $R'_{n}=R[[x_{1},\cdots,x_{j'-j}]]$, $M'_{n}=M_{n}\tenseur_{R_{n}}R'_{n}$ is a Taylor-Wiles-Kisin with integers $(h,j')$.

For $(m,n)\in\N^{2}$ a pair of integers satisfying $m\geq1$ and $n\geq m$, define $r_{m}$ to be the integer $smp^{m}(h+j)$ and $\cid_{m}$ to be the ideal
\begin{equation}\nonumber
\left(\varpi_{\Ocal}^{m},(1+y_{1})^{p^{m}}-1,\cdots,(1+y_{h})^{p^{m}}-1,y_{h+1}^{p^{m}},\cdots,y_{h+j}^{p^{m}}\right)\Ocal[[y_{1},\cdots,y_{h+j}]].
\end{equation}
Let $\mgot_{R_{n}}^{(r_{m})}$ be the ideal of $R_{n}$ generated by $r_{m}$-th powers of elements of $\mgot_{R_{n}}$. Define
\begin{equation}\nonumber
D_{m,n}\eqdef R_{n}/\left(\cid_{m}R_{n}+\mgot_{R_{n}}^{(r_{m})}\right),\ L_{m,n}\eqdef M_{n}/\cid_{m}M_{n}
\end{equation}
According to \cite[page 1158]{KisinFlat}, $\mgot_{R_{n}}^{(r_{m})}M_{n}$ is included in $\cid_{m}M_{n}$. Hence, the quotient $L_{m,n}$ is endowed with a structure of $D_{m,n}$-module.
\begin{DefEnglish}
Let $\TW=\{B,R,M,(R_{n},M_{n})_{n\in\N}\}$ be a Taylor-Wiles-Kisin system with $M$ of rank $s$ over $\Ocal[[y_{h+1},\cdots,y_{h+j}]]$. For $n\geq m\geq1$ two integers, a patching datum $\PD$ of level $(n,m)$ attached to $\TW$ is a multiplet
\begin{equation}\nonumber
\left(D_{m,n},L_{m,n},\psi_{m,n},\pi_{m,n}^{(1)},\pi_{m,n}^{(2)},\pi_{m,n}^{(3)}\right)
\end{equation}
satisfying the following properties.
\begin{enumerate}
\item The map
\begin{equation}\nonumber
\psi_{m}:\Ocal[[y_{1},\cdots,y_{h+j}]]\fleche D_{m,n}
\end{equation}
is a morphism of local $\Ocal$-algebras.
\item The map
\begin{equation}\nonumber
\pi_{m,n}^{(1)}:D_{m,n}\fleche R/\left(\cid_{m}R+\mgot_{R}^{(r_{m})}\right)
\end{equation}
is a surjective map of $B$-algebras.
\item The map
\begin{equation}\nonumber
\pi_{m,n}^{(2)}:\Bbf\fleche D_{m,n}
\end{equation}
is a surjective map of $B$-algebras.
\item The map
\begin{equation}\nonumber
\pi_{m,n}^{(3)}:L_{m,n}\fleche M/\cid_{m}M
\end{equation}
is a surjective map of $\Bbf$-modules.
\end{enumerate}
A patching datum $\PD$ attached to $\TW$ is a sequence of patching datum of level $(n,m)\in\N^{2}$. For simplicity of notation, the patching datum $(D,L,\psi,\pi^{(1)},\pi^{(2)},\pi^{(3)})$ is denoted by $(D,L)$  when $\psi,\pi^{(1)},\pi^{(2)},\pi^{(3)},m$ and $n$ are implicit from the context.
\end{DefEnglish}

Two patching data $(D_{1},L_{1},\psi_{1},\pi_{1}^{(1)},\pi_{1}^{(2)},\pi_{1}^{(3)})$ and $(D_{2},L_{2},\psi_{2},\pi_{2}^{(1)},\pi_{2}^{(2)},\pi_{2}^{(3)})$ both of level $m$ are isomorphic if there exists a pair $(\phi,\psi)$ of morphisms such that $\phi$ is a morphism of $\Bbf$-algebras making the diagram
\begin{equation}\nonumber
\xymatrix{
\Bbf\ar@{>>}[rr]^(0.5){\pi_{2}^{(2)}}\ar@{>>}[d]_{\pi_{1}^{(2)}}&&D_{2}\ar@{>>}[d]^{\pi_{2}^{(1)}}\\
D_{1}\ar_{\phi}[rru]\ar@{>>}[rr]_{\pi_{1}^{(1)}}&&\frac{R}{\left(\cid_{m}R+\mgot_{R}^{(r_{m})}\right)}
}
\end{equation} 
commutative and $\psi$ is a surjective morphism of $D_{1}$-modules making the diagram 
\begin{equation}\nonumber
\xymatrix{
L_{1}\ar@{>>}[r]^(0.4){\pi_{1}^{(3)}}\ar@{>>}[d]_(0.4){\psi}&M/\cid_{m}M\\
L_{2}\ar@{>>}[ru]_{\pi_{2}^{(3)}}&
}
\end{equation}
commutative. 

Let $\TW$ be a Taylor-Wiles-Kisin system with associated patching datum $\PD$. The cardinality of the ring $D_{m,n}$ is then bounded by the cardinality of $\Bbf/\mgot^{(r_{m})}\Bbf$, which is independent of $n$. Hence, there exists a subsequence $(Q(n))_{n\in\N}$ such that for all $n\geq m\geq1$, the patching datum $(D_{m,Q(n)},L_{m,Q(n)})$ is isomorphic to $(D_{m},L_{m})\eqdef(D_{m,Q(m)},L_{m,Q(m)})$. Let $(D_{m})_{m\geq1}$ and $(L_{m})_{m\geq1}$ be the inverse systems with transition maps induced by the isomorphisms of patching data
\begin{equation}\nonumber
\left(D_{m+1}/(\cid_{m}D_{m+1}+\mgot_{D_{m+1}}^{(r_{m})}),L_{m+1}/\cid_{m}L_{m+1}\right)\simeq(D_{m},L_{m}).
\end{equation}
Let $R_{\infty}$ and $L_{\infty}$ be their inverse limits. The patched system attached to the patching datum $\PD$ is written $(D_{m},L_{m})_{m\geq1}$. The limit object $(R_{\infty},L_{\infty})$ is the inverse limit of the patched system.
\begin{LemEnglish}\label{LemTW}
Let $\TW=\{B,R,M,(R_{n},M_{n})_{n\in\N}\}$ be a Taylor-Wiles-Kisin system whose patched system has limit objects $(R_{\infty},L_{\infty})$ and let $\aid_{\infty}$ be a minimal prime ideal of $R_{\infty}$. Then $R_{\infty}/\aid_{\infty}$ is isomorphic to $\Bbf/\aid=B/\aid[[x_{1},\cdots,x_{h+j-d}]]$ for some minimal prime ideal $\aid$ of $B$ and thus is a regular local ring if and only if $B/\aid$ is one. 
\end{LemEnglish}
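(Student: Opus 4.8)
The plan is to run the classical depth-and-dimension count of the Taylor--Wiles--Kisin method on the limit object $(R_\infty,L_\infty)$, and then to read off the ring-theoretic statement from a short argument about minimal primes in a power series ring over a complete local ring. First I would record two structural facts about the patched limit. Set $S_\infty=\Ocal[[y_1,\dots,y_{h+j}]]$. Passing to the inverse limit over $m$ in property \ref{ItemFree} of Definition \ref{DefTaylorWilesKisin} — using that $\bid_n$ lies in $\cid_m$ whenever $n\geq m$ (because $(1+y_i)^{p^n}-1$ lies in $\cid_m$), so that each $L_m$ is free of rank $s$ over $\Ocal[[y_1,\dots,y_{h+j}]]/\cid_m$, exactly in the spirit of the paragraph following that definition — one gets that $L_\infty$ is a finite free $S_\infty$-module of rank $s\geq 1$, the $S_\infty$-structure being induced by the limit map $\psi_\infty\colon S_\infty\to R_\infty$ of the $\psi_m$. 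In particular $\operatorname{depth}_{S_\infty}L_\infty=\dim S_\infty=h+j+1$. Secondly, each $L_m$ is finite over the Artinian ring $D_m$, so $L_\infty$ is a finite $R_\infty$-module by topological Nakayama, while $R_\infty$ is a quotient of $\Bbf=B[[x_1,\dots,x_{h+j-d}]]$, a complete Noetherian local ring of dimension $(d+1)+(h+j-d)=h+j+1$ by axiom \ref{ItemBregular}; hence $\dim R_\infty\leq h+j+1$.

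Then comes the numerical coincidence. Since $S_\infty\to R_\infty$ is local, a maximal $L_\infty$-regular sequence inside $\mgot_{S_\infty}$ maps to an $L_\infty$-regular sequence inside $\mgot_{R_\infty}$, so $\operatorname{depth}_{R_\infty}L_\infty\geq\operatorname{depth}_{S_\infty}L_\infty=h+j+1$; combined with $\operatorname{depth}_{R_\infty}L_\infty\leq\dim_{R_\infty}L_\infty\leq\dim R_\infty\leq h+j+1$ this forces equality throughout. Thus $L_\infty$ is a maximal Cohen--Macaulay $R_\infty$-module and $\dim R_\infty=h+j+1$. Moreover $L_\infty$ is a faithful $R_\infty$-module (this is part of the standard patching package, and is where one uses that $M$ is faithful over $R$), so $\operatorname{Supp}_{R_\infty}L_\infty=\Spec R_\infty$; since $L_\infty$ is Cohen--Macaulay, its associated primes are exactly the minimal primes of its support, all of coheight $\dim L_\infty=h+j+1$, so $R_\infty$ is equidimensional of dimension $h+j+1$. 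In particular $\dim R_\infty/\aid_\infty=h+j+1$ for every minimal prime $\aid_\infty$ of $R_\infty$.

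Finally I would translate this into the asserted isomorphism. Write $R_\infty=\Bbf/I$, let $\Pid\supseteq I$ be the prime of $\Bbf$ lying under $\aid_\infty$ and minimal over $I$, and pick a minimal prime $\qid\subseteq\Pid$ of $\Bbf$. Then $h+j+1=\dim\Bbf/\Pid\leq\dim\Bbf/\qid\leq\dim\Bbf=h+j+1$, so $\dim\Bbf/\qid=\dim\Bbf/\Pid$; as $\Bbf/\qid$ is a complete local \emph{domain}, hence catenary and equidimensional, $\operatorname{ht}(\Pid/\qid)=\dim\Bbf/\qid-\dim\Bbf/\Pid=0$, i.e. $\Pid=\qid$ is already a minimal prime of $\Bbf$. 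The minimal primes of $\Bbf=B[[x_1,\dots,x_{h+j-d}]]$ are precisely the $\aid\Bbf$ for $\aid$ minimal in $B$, with $\Bbf/\aid\Bbf\cong(B/\aid)[[x_1,\dots,x_{h+j-d}]]$, so $R_\infty/\aid_\infty\cong(B/\aid)[[x_1,\dots,x_{h+j-d}]]=\Bbf/\aid$. The regularity equivalence is then elementary: a power series ring $C[[x_1,\dots,x_{h+j-d}]]$ over a Noetherian local ring $C$ is regular if and only if $C$ is — one direction is standard, and for the converse $\mgot_C$ is the image of $\mgot_{C[[x_1,\dots,x_{h+j-d}]]}$ modulo $(x_1,\dots,x_{h+j-d})$, with the $x_i$ forming part of a minimal generating system, so $\mgot_C$ needs at most $\dim C$ generators.

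The step I expect to be the genuine obstacle, as opposed to a formality, is the faithfulness of $L_\infty$ over $R_\infty$ invoked in the second paragraph — equivalently, the absence of spurious low-dimensional components of $R_\infty$ off the support of $L_\infty$. This is exactly the content that the Taylor--Wiles--Kisin construction (together with the hypothesis that $M$ is a faithful $R$-module in the relevant applications) is built to deliver; once it is granted, the depth count and the minimal-prime bookkeeping are routine.
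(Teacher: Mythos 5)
Your proposal runs the same depth-and-dimension count as the paper: compute $\operatorname{depth}_{R_\infty}L_\infty = h+j+1$ via the free $\Ocal[[y_1,\dots,y_{h+j}]]$-structure, compare with $\dim\Bbf = h+j+1$ to pin down all the dimensions, and conclude. Where you genuinely add value is in locating a step the paper's proof slides past. The paper's chain
\[
h+j+1=\operatorname{depth}_{R_{\infty}}L_{\infty}\leq\dim R_{\infty}=\dim R_{\infty}/\aid_{\infty} \leq\dim\Bbf/\aid=\dim\Bbf=h+j+1
\]
treats $\dim R_\infty = \dim R_\infty/\aid_\infty$ as if it were automatic. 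It is not: for an arbitrary minimal prime $\aid_\infty$ of $R_\infty$ one only has $\dim R_\infty/\aid_\infty \leq \dim R_\infty$, and upgrading this to equality for \emph{every} minimal prime is exactly equidimensionality of $R_\infty$, which, given the depth count, amounts to knowing that every minimal prime of $R_\infty$ lies in $\operatorname{Supp}(L_\infty)$, i.e.\ that $L_\infty$ is a faithful $R_\infty$-module. You route through precisely this, using the associated-prime bound $\operatorname{depth}_{R_\infty}L_\infty \leq \dim R_\infty/\pid$ for $\pid$ minimal in $\operatorname{Supp}(L_\infty)$ rather than the blunter $\operatorname{depth}\leq\dim R_\infty$, and you are right to single out faithfulness as the real obstacle.

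Two caveats. First, neither you nor the paper actually \emph{prove} faithfulness, and it is not among the axioms in Definition \ref{DefTaylorWilesKisin}, so the abstract lemma as stated appears to want an extra hypothesis. In the one place it is used (Proposition \ref{PropTWgeneral}, feeding into Lemma \ref{LemDescendBis}), the patched modules $\Delta_n^\square$ are free of rank one over $R_n^\square$; combined with the containment $\mgot_{R_n}^{(r_m)}M_n\subseteq\cid_m M_n$ quoted from Kisin, this forces $L_{m,n}\cong D_{m,n}$ and hence $L_\infty\cong R_\infty$, so faithfulness is trivial there and the gap is harmless in context. But a reader checking the lemma against the axioms alone will hit the same snag you did, and your honest acknowledgment of this at the end is the right instinct. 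Second, a minor stylistic difference in the endgame: the paper chooses $\aid$ so that $\Bbf/\aid\surjection R_\infty/\aid_\infty$ and then argues that a surjection from a complete local domain onto a complete local ring of equal dimension is an isomorphism (using that $\Bbf/\aid$ is catenary and equidimensional), whereas you show directly that the preimage $\Pid$ of $\aid_\infty$ in $\Bbf$ is already a minimal prime; these are equivalent, and your version has the small advantage of not implicitly invoking equidimensionality of $\Bbf$ itself, a hypothesis the paper's step $\dim\Bbf/\aid=\dim\Bbf$ quietly uses (and which does hold for the $B$ built in Proposition \ref{PropTWgeneral}, but again is not an axiom).
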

\begin{proof}
Recall that according to property \ref{ItemBregular} of definition \ref{DefTaylorWilesKisin}, $B$ is a local ring of dimension $d$. Hence, the ring $\Bbf$ is local of dimension $h+j+1$ and for any minimal prime ideal $\aid$ of $B$, the quotient $\Bbf/\aid$ is a local domain of dimension $h+j+1$ which is a regular local ring if and only if $B/\aid$ is one. It is thus enough to prove that there exists a minimal prime ideal $\aid$ such that $R_{\infty}/\aid_{\infty}$ is isomorphic to $\Bbf/\aid$.

For all integer $m\geq1$, there exists by construction a surjection $\pi^{(2)}_{m}:\Bbf\surjection D_{m}$ and hence a surjection $\pi_{\infty}^{(2)}:\Bbf\surjection R_{\infty}$. Let $\aid_{\infty}$ be a minimal prime ideal of $R_{\infty}$ and let $\aid$ be a minimal prime ideal of $B$ such that $\pi_{\infty}^{(2)}$ induces a surjection from $\Bbf/\aid$ onto $R_{\infty}/\aid_{\infty}$. The maps
$$\psi_{m}:\Ocal[[y_{1},\cdots,y_{h+j}]]\fleche D_{m}$$
for $m\geq1$ induce a map $\psi_{\infty}:\Ocal[[y_{1},\cdots,y_{h+j}]]\fleche R_{\infty}$. According to the remark following definition \ref{DefTaylorWilesKisin}, $L_{\infty}$ is finite and free (of rank $s$) for this $\Ocal[[y_{1},\cdots,y_{h+j}]]$-module structure. Hence, the depth of $L_{\infty}$ as $\Ocal[[y_{1},\cdots,y_{h+j}]]$-module is $h+j+1$ by the Auslander-Buchsbaum and Serre theorem. The depth of a finitely generated module is invariant by change of local noetherian rings so
\begin{equation}\nonumber
\operatorname{depth}_{\Ocal[[y_{1},\cdots,y_{h+j}]]}L_{\infty}=\operatorname{depth}_{R_{\infty}}L_{\infty}.
\end{equation}
The depth of a finitely generated module over a local noetherian ring is less than the dimension of the ring so
\begin{equation}\nonumber
h+j+1=\operatorname{depth}_{R_{\infty}}L_{\infty}\leq\dim R_{\infty}=\dim R_{\infty}/\aid_{\infty} \leq\dim\Bbf/\aid=\dim\Bbf=h+j+1.
\end{equation}
All the inequalities above are consequently equalities. In particular
\begin{equation}\nonumber
\dim R_{\infty}/\aid_{\infty}=\dim\Bbf/\aid.
\end{equation}
The morphism $\Bbf/\aid\surjection R_{\infty}/\aid_{\infty}$ is thus a surjection from a complete, local, noetherian domain to a complete, local ring of the same dimension and hence an isomorphism.
\end{proof}
By Chebotarev's density theorem, there exist a set $X$ whose elements are the empty set and finite sets $Q$ of common cardinality $r>0$ of rational primes $q\notin\Sigma$ such that, for all $n\in\N$, the set
\begin{equation}\nonumber
X_{n}\overset{\operatorname{def}}{=}\{Q\in X|\forall q\in Q,\ q\equiv1\modulo p^{n}\textrm{ and }\rhobar(\Fr(q))\textrm{ has distinct eigenvalues}\}
\end{equation}
is infinite. For $Q\in X$, we put $R_{Q}=\Hecke^{\Si\cup Q}_{\mr}$ where it is understood that $U_{\ell}$ is sufficiently small at all $\ell\in Q$. Under assumption \ref{HypMain} and assumption \ref{HypLocalDef} below, a Taylor-Wiles-Kisin system with favorable commutative algebra properties exists. Since a universal deformation ring (local or global, usual or framed) of $\rhobar$ is naturally isomorphic to the same universal deformation ring for $\rhobar\tenseur\chi$ and since we are interested in the following only on commutative algebra properties of these rings, we may freely twist $\rhobar$ to check these assumptions (accordingly assumptions \ref{HypMain} and \ref{HypLocalDef} are insensitive to twist by a character). 
\begin{HypEnglish}\label{HypLocalDef}
The compact open subgroup $U^{(p)}$ satisfies the following property: if $\ell\nmid p$ belongs to $\Sigma\backslash\Si(\rhobar)$ then it is odd and one of the following holds.
\begin{enumerate}
\item\label{ItemWildInertia} Let $I_{\ell}/P_{\ell}$ be the maximal pro-$p$-quotient of $I_{\ell}$. Then $\rhobar|P_{\ell}$ is non-scalar.
\item\label{ItemDiffplusmoins} $\ell\not\equiv\pm1\modulo p$.
\suspend{enumerate}
In \ref{ItemMoins} below, it is assumed that $\ell\equiv-1\modulo p$.
\resume{enumerate}
\item\label{ItemMoins}Let $\gamma$ be the ratio of the eigenvalue of a lift of $\rhobar(\Fr(\ell))$. Then one of the following holds.
\begin{enumerate}
\item Either $\gamma\neq-1$.
\item\label{ItemMoinsB} Or $\gamma=-1$, and either $\rhobar|_{I_{\ell}}$ is non-scalar or for all motivic point $\la\in\Spec^{\mot}\Hsmr$, the restriction of $\rho_{\la}$ to $G_{\Q_{\ell}}$ is reducible.
\end{enumerate}
\suspend{enumerate}
In \ref{Itemplusramifie} and \ref{Itemplusnonramifie}, it is assumed that $\ell\equiv1\modulo p$. Moreover we assume one of the followings.
\resume{enumerate}
\item\label{Itemplusramifie} The $I_{\ell}$-representation $\rhobar|I_{\ell}$ is non-scalar and the deformation type at $\ell$ does not correspond to a ramified principal series.
\item\label{Itemplusnonramifie} The $I_{\ell}$-representation $\rhobar|I_{\ell}$ is scalar, the deformation type at $\ell$ does not correspond to a ramified principal series. Let $\s$ be a lift of $\Fr(\ell)$. Then
\begin{enumerate}
\item Either $\rhobar(\s)$ has distinct eigenvalues.
\item Or $\rhobar(\s)$ is not diagonalizable.
\item Or the deformation type at $\ell$ is a twist by character.
\end{enumerate}
\end{enumerate}
%
%
\end{HypEnglish}
In the proposition below, we denote by $R_{\ell}$ the universal framed deformation ring of $\rhobar|G_{\Q_{\ell}}$ for $\ell\in\Si$ ($\ell|p$ is possible).
\begin{Prop}\label{PropTWgeneral}
Suppose that assumption \ref{HypLocalDef} holds. Then there exists a Taylor-Wiles-Kisin system $\{B,R^{\square}_{\vide},\Ds^{\square},(R_{n}^{\square},\Delta_{n}^{\square})_{n\in\N}\}$ satisfying the following properties
\begin{enumerate}
\item The integer $j$ of definition \ref{DefTaylorWilesKisin} is equal to $3$.
\suspend{enumerate}
\resume{enumerate}
\item  The ring $B$ is the completed tensor product over $\Ocal$
\begin{equation}\nonumber
B=\widehat{\underset{\ell\in\Si}{\bigotimes}}R_{\ell}.
\end{equation}
\item Each irreducible component of $B$ is a regular local ring of dimension $6+4\cardinal{\Sigma^{(p)}}$.
\item The ring $R^{\square}_{\vide}$ is the universal framed deformation ring $R^{\square}_{\Sigma}(\rhobar)$ representing framed deformation of $\rhobar$ unramified outside $\Sigma$ and such that any motivic point of $R^{\square}_{\Sigma}(\rhobar)$ is attached to an eigencuspform of level $U'_{p}U^{p}$. The isomorphism
\begin{equation}\nonumber
\Rsr\isom\Hsmr
\end{equation}
induces an isomorphism
\begin{equation}\nonumber
R_{\vide}^{\square}\isom\Hsmr[[Y_{1},Y_{2},Y_{3}]].
\end{equation}
\item For all $n\in\N$, the ring $R_{n}^{\square}$ is the universal framed deformation ring $R_{Q_{n}}^{\square}$ with $Q_{n}\in X_{n}$.
\item For all $n\in\N$, let $Q_{n}\in X_{n}$ be the set such that $R_{n}=R_{Q_{n}}$. Then the $R_{n}^{\square}$-module $\Delta^{\square}_{n}$ is $\Delta^{}_{Q_{n}}\tenseur_{R_{Q_{n}}}R_{Q_{n}}^{\square}$.
\end{enumerate}
\end{Prop}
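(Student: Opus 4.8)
The plan is to carry out the Taylor--Wiles--Kisin patching method, with the patched object taken to be the framed universal fundamental line $\Ds^{\square}=\Ds\tenseur_{\Hsmr}R^{\square}_{\vide}$ rather than a completed homology group. What makes this substitution legitimate is the fact recorded just before the proposition: being the determinant of the perfect complex $\RGamma_{\et}(\Z[1/\Si],\Ts)$ (tensored with $\Det^{-1}_{\Hsmr}\Ts(-1)^{+}$), $\Ds$ is an \emph{invertible} $\Hsmr$-module whose formation commutes with arbitrary base change; the same holds at each auxiliary level $\Si\cup Q$, yielding the invertible modules $\Delta_{Q}$ over $R_{Q}=\Hecke_{\mr}^{\Si\cup Q}$ and $\Delta_{Q}^{\square}$ over $R_{Q}^{\square}$ of properties $5$--$7$. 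I will use throughout the identification $\Rsr\isom\Hsmr$ and the sets $X_{n}$ of Taylor--Wiles primes constructed before the proposition (which depend on assumption \ref{HypMain}).

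The hard part, and the first step, is the local analysis: for each $\ell\in\Si$ one must show that the irreducible components of the framed local deformation ring $R_{\ell}$ relevant here — those cut out by the deformation condition prescribed by $U^{(p)}$, equivalently those through which the motivic points of $\Hsmr$ factor — are regular, indeed (being flat over $\Ocal$ with residue field $k$) power series rings over $\Ocal$. At $\ell=p$ this is what assumption \ref{HypMain}(\ref{ItemrhobarEnP}) buys: when $\rhobar|_{G_{\qp}}$ is reducible the conditions $\chi\psi^{-1}\neq 1$ and $\chi\psi^{-1}\neq\bar{\chi}_{\cyc}^{-1}$ (and absolute irreducibility of $\rhobar|_{G_{\qp}}$ otherwise) force the obstruction space of the relevant $p$-adic Hodge-theoretic deformation problem to vanish, so $R_{p}$ is formally smooth over $\Ocal$. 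For $\ell\in\Si(\rhobar)$, $\ell\nmid p$, the condition is minimal ramification, and the minimally ramified framed local deformation ring for $\GL_{2}$ is formally smooth over $\Ocal$. For $\ell\in\Si\backslash\Si(\rhobar)$, $\ell\nmid p$, this is precisely the content of assumption \ref{HypLocalDef}: each of its cases \ref{ItemWildInertia}--\ref{Itemplusnonramifie} (non-scalar wild inertia; $\ell\not\equiv\pm1\pmod p$; and the enumerated subcases at $\ell\equiv\pm1\pmod p$, all avoiding ramified principal series) is one of the situations in which the explicit classification of two-dimensional local Galois deformation rings at primes away from $p$ produces a smooth component over $\Ocal$. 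This is the step carrying the real weight of the proof; assumption \ref{HypLocalDef} is designed so that one stays inside the smooth locus of the local deformation spaces.

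Granting that, set $B=\widehat{\bigotimes}_{\ell\in\Si}R_{\ell}$. Each relevant component of each $R_{\ell}$ is a power series ring over $\Ocal$, and a completed tensor product over $\Ocal$ of power series rings over $\Ocal$ is again one; hence every irreducible component of $B$ is a regular local ring, and adding up the local relative dimensions (together with the one-dimensional base $\Ocal$) gives $\dim B=6+4\cardinal{\Sigma^{(p)}}$. This yields properties $2$ and $3$ and axiom \ref{ItemBregular} of definition \ref{DefTaylorWilesKisin}, with $d=5+4\cardinal{\Sigma^{(p)}}$.

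Finally, take $R=R^{\square}_{\vide}=R^{\square}_{\Si}(\rhobar)$, made a $B$-algebra by the product of the local restriction maps; since framing adds $n^{2}-\dim_{k}H^{0}(G_{\Q,\Si},\ad\rhobar)=3$ variables (absolute irreducibility of $\rhobar$), one gets $R^{\square}_{\vide}\isom\Hsmr[[Y_{1},Y_{2},Y_{3}]]$ (property $4$), over which $M=\Ds^{\square}$ is free of rank one, in particular non-zero. For $Q_{n}\in X_{n}$ put $R^{\square}_{n}=R^{\square}_{Q_{n}}$ and $\Delta^{\square}_{n}=\Delta_{Q_{n}}\tenseur_{R_{Q_{n}}}R^{\square}_{Q_{n}}$ (properties $5$--$7$), set $j=3$ (property $1$), let $h$ be the common size of the $Q_{n}$, and let $\Ocal[[y_{1},\cdots,y_{h+j}]]\fleche R_{n}^{\square}\fleche R$ send $y_{1},\cdots,y_{h}$ to the diamond operators at the primes of $Q_{n}$ and $y_{h+1},y_{h+2},y_{h+3}$ to a Noether normalization of $\Hsmr$. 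Axioms \ref{ItQuotient} and \ref{ItSurjectionRn} are then the usual presentation of the Taylor--Wiles-augmented global framed deformation ring: $R^{\square}_{Q_{n}}$ is a quotient of $\Bbf=B[[x_{1},\cdots,x_{h+j-d}]]$ by the standard $H^{1}$-count (using that the primes of $Q_{n}$ annihilate the dual Selmer group), and trivializing the tame ramification at $Q_{n}$, i.e. killing $y_{1},\cdots,y_{h}$, recovers $R^{\square}_{\vide}$ and $\Ds^{\square}$. Axiom \ref{ItemTW} follows from the base-change compatibility of $\RGamma_{\et}$ for the level change $\Si\cup Q_{n}\rightsquigarrow\Si$ and invertibility of the determinant. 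Axiom \ref{ItemFree} I would deduce from the classical Taylor--Wiles freeness statement — degree-one \'etale cohomology of the modular curve of the relevant tame level, localized at $\mgot_{\rhobar}$, is free over the group ring $\Ocal[[y_{1},\cdots,y_{h}]]/((1+y_{i})^{p^{n_{i}}}-1)_{i}$ with $n_{i}=v_{p}(q_{i}-1)$ — whence the perfect complex computing $\RGamma_{\et}$ at level $\Si\cup Q_{n}$ is a bounded complex of free modules over that group ring, its determinant is free over it, and combining with the Noether normalization exhibits $\Delta^{\square}_{n}$ as free of rank $s$ over $\Ocal[[y_{1},\cdots,y_{h+j}]]/\bid_{n}$ with $\bid_{n}\subseteq((1+y_{1})^{p^{n}}-1,\cdots,(1+y_{h})^{p^{n}}-1)$ since $n_{i}\geq n$. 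Collecting these verifications produces the Taylor--Wiles--Kisin system; only the local analysis of the first step is genuinely difficult.
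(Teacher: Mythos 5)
Your overall plan — build a Taylor--Wiles--Kisin system with the (framed) fundamental lines as the patched modules, and concentrate on the regularity of the irreducible components of the local framed deformation rings — is in the right spirit, but two key steps in the proposal do not work as written.

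First, the verification of axiom \ref{ItemFree} has a genuine gap. You argue that the \'etale cohomology of the modular curve at tame level $\Si\cup Q_{n}$ is free over the group ring $\Ocal[[y_{1},\dots,y_{h}]]/((1+y_{i})^{p^{n_{i}}}-1)_{i}$ (true, classical Taylor--Wiles), and that therefore the determinant of the Galois cohomology complex is free over that group ring. But $\Delta_{n}^{\square}$ is by definition $\Det^{-1}_{R^{\square}_{n}}\RGamma_{\et}(\cdots)\tenseur\Det^{-1}_{R^{\square}_{n}}T(-1)^{+}$, a \emph{rank-one} $R^{\square}_{n}$-module; a determinant taken over the group ring is a different object and does not tell you how $R^{\square}_{n}$ itself sits as a module over $\Ocal[[y_{1},\dots,y_{h+j}]]/\bid_{n}$. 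The paper closes this gap by first producing (via \cite[Thm.\ (3.4.11)]{KisinFlat}) a Taylor--Wiles--Kisin system whose patched modules $M^{\square}_{n}$ are Hecke modules, then using the Taylor--Wiles complete-intersection-and-freeness criterion (in the form of \cite[\S 2.2]{FujiwaraDeformation}) to show that $M_{n}$ is \emph{free over $R_{n}$} and hence $M^{\square}_{n}$ is free over $R^{\square}_{n}$, and only then replacing $M^{\square}_{n}$ by a rank-one free $R^{\square}_{n}$-submodule. It is precisely the freeness of $M^{\square}_{n}$ over $R^{\square}_{n}$ (a multiplicity-one statement, not at all formal) that transports the freeness of $M^{\square}_{n}$ over $\Ocal[[y_{1},\dots,y_{h+j}]]/\bid_{n}$ to $\Delta^{\square}_{n}\simeq R^{\square}_{n}$. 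Your proposal never invokes this multiplicity-one input, so axiom \ref{ItemFree} is not actually established, and your concluding remark that ``only the local analysis \dots\ is genuinely difficult'' underestimates this step.

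Second, the treatment of $\ell=p$ misstates the mechanism. You say that assumption \ref{HypMain}\eqref{ItemrhobarEnP} forces ``the obstruction space of the relevant $p$-adic Hodge-theoretic deformation problem'' to vanish. But the conditions $\chi\psi^{-1}\neq 1$ and $\chi\psi^{-1}\neq\bar\chi_{\cyc}^{-1}$ do not rule out $\psi\chi^{-1}=\bar\chi_{\cyc}^{-1}$, so one cannot conclude $H^{2}(G_{\qp},\ad\rhobar)=0$ from them alone. The paper does not argue via vanishing of $H^{2}$ at all. In the reducible case it instead runs a dimension count: the framed nearly-ordinary ring $R_{p}^{\ord}$ is a power-series ring of relative dimension $3$ (\cite[Lemma 2.2]{SkinnerWilesDur}); $R_{p}$ has relative dimension $5$ and the kernel of $R_{p}\surjection R_{p}^{\ord}$ is $2$-generated (\cite[Cor.\ 7.4]{BockleDemuskin}); hence those two generators are forced to be a regular sequence and $R_{p}$ is regular. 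In the irreducible case it cites \cite[Thm.\ 4.1]{RamakrishnaFlat} and \cite[Prop.\ 2.2]{DiamondFlachGuo} directly. Your conclusion (regularity of the relevant components of $R_{p}$) is correct, but the reason you give for it is not, and you should not assert formal smoothness on the basis of a claimed vanishing of an obstruction space that the hypotheses do not deliver. For $\ell\nmid p$ your appeal to the classification (\cite[\S 5]{ShottonDeformation}) matches the paper.

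Finally, your sketch of axioms \ref{ItQuotient}, \ref{ItSurjectionRn} and \ref{ItemTW} is plausible but glosses over the level-change mechanism $\Si\cup Q_{n}\rightsquigarrow\Si$ (there are Euler factors at the primes of $Q_{n}$ to account for) and, more importantly, it is precisely these verifications that the paper outsources to \cite[Thm.\ (3.4.11)]{KisinFlat} rather than re-deriving. In summary: same strategy, but the proposal is missing the two load-bearing citations — Kisin's patching theorem and the Taylor--Wiles/Fujiwara freeness criterion — without which the replacement of Hecke modules by fundamental lines, and hence axiom \ref{ItemFree}, is not justified.
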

\paragraph{Remark}The proof given here takes a slight detour as this author does not know how to construct directly a Taylor-Wiles-Kisin system $\{B,R^{\square}_{\vide},M^{\square},(M^{\square}_{n},R_{n}^{\square})\}$ where the modules $M$ and $M_{n}$ are \emph{free} modules over $R^{\square}_{\vide}$ and $R_{n}^{\square}$ respectively.
\begin{proof}
Let $B$ be as in the statement of the proposition. Following the proof of \cite[Proposition (3.2.5)]{KisinFlat}, fix the cardinality of $Q_{n}$ to
\begin{equation}\nonumber
\cardinal{Q_{n}}=\dim_{k}\Hun(G_{\Q,\Sigma},\ad^{0}\rhobar(1))
\end{equation}
and put $r=\cardinal{Q_{n}}+\cardinal{\Sigma}-2$. Then, for all $Q_{n}$, $R_{n}^{\square}$ is a quotient of $R_{\vide}^{\square}[[X_{1},\cdots,X_{r}]]$. Under our assumption \ref{HypMain}, it follows from  the proof of \cite[Theorem (3.4.11)]{KisinFlat} that there exists a Taylor-Wiles-Kisin system $\{B,R^{\square}_{\vide},M^{\square},(M^{\square}_{n},R_{n}^{\square})\}$ where $M^{\square}$ and $M_{n}^{\square}$ are the extension of scalars to $R^{\square}_{\vide}$ and $R^{\square}_{n}$ of suitable Hecke-modules $M$ and $M_{n}$ over $R_{\vide}$ and $R_{n}$ respectively. Again by our assumption \ref{HypMain}, the existence of Taylor-Wiles system for minimal deformations as in \cite{TaylorWiles} and the complete intersection and freeness criterion of \textit{loc. cit.} (see \cite[Section 2.2]{FujiwaraDeformation}), $M$ and $M_{n}$ are free modules over $R_{\vide}$ and $R_{n}$ respectively. Thus $M^{\square}$ and $M_{n}^{\square}$ have maximal depths over $R^{\square}$ and $R_{n}^{\square}$ so are free by the Aulander-Buchsbaum and Serre formula. The axioms defining a Taylor-Wiles-Kisin system remain true after replacing the modules $M$  (resp. $M_{n}$) by an $R$-submodule (resp. an $R_{n}$-module) free of rank 1. Since the property of being a Taylor-Wiles system depends only on the isomorphism class of the modules considered, we may further assume that this Taylor-Wiles system is $\{B,R^{\square}_{\vide},\Ds^{\square},(R_{n}^{\square},\Delta_{n}^{\square})_{n\in\N}\}$ as in the statement of the proposition.

Next, we prove that every irreducible component of $B$ is a regular, local ring. According to the last assertion of lemma \ref{LemTW}, it is enough to prove that  irreducible components of $R_{\ell}$ are regular, local rings. This follows from the explicit computations of these irreducible components in \cite{RamakrishnaFlat,BockleDemuskin,SkinnerWilesDur,DiamondFlachGuo,ShottonDeformation}.

First we assume $\ell|p$. If $\rhobar|G_{\qp}$ is irreducible, then $R_{p}$ is a power-series ring of relative dimension 5 over $\Ocal$ by \cite[Theorem 4.1]{RamakrishnaFlat} (in the flat case) and \cite[Proposition 2.2]{DiamondFlachGuo} (in the general irreducible case). If $\rhobar|G_{\qp}$ is reducible, then the universal framed nearly-ordinary deformation ring $R_{p}^{\ord}$ is a power-series ring of relative dimension 3 by \cite[Lemma 2.2]{SkinnerWilesDur}. By \cite[Corollary 7.4]{BockleDemuskin}, $R_{p}$ is of relative dimension 5 over $\Ocal$ and the kernel of the surjection from $R_{p}$ onto $R_{p}^{\ord}$ is generated by two elements. These two elements thus necessarily form a regular sequence and so $R_{p}$ is a regular local ring of relative dimension 5.

Now we assume that $\ell\nmid p$. In \cite[Section 5]{ShottonDeformation}, all irreducible components $X$ of $R_{\ell}$ are explicitly computed. The outcome of these calculations is that $R_{\ell}$ is a regular, local ring of dimension 5 under the hypotheses of assumptions \ref{HypLocalDef}.

Finally, each prime $\ell\in\Sigma^{(p)}$ contributes 4 to the relative dimension of $B$ over $\Ocal$ and $R_{p}$ contributes 5 to the relative dimension of $B$ over $\Ocal$ so $\dim B=6+4\cardinal{\Sigma^{(p)}}$.
\end{proof}
Denote by $(R^{\square}_{\infty},\Delta^{\square}_{\infty})$ the limit objects of the patching system of the Taylor-Wiles system of the previous proposition.

\section{Proof of the main theorems}
\subsection{Statements}
We repeat the statements of our main results.

\begin{Theo}\label{TheoCorps}
Assume that $\rhobar$ satisfies \ref{HypMain} and \ref{HypLocalDef}.
Then the isomorphism \eqref{EqZetaElementSigma}
\begin{equation}\nonumber
\zs:\Ds(\Ts)\tenseur Q(\Hsmr)\isom Q(\Hsmr)
\end{equation}
of proposition \ref{PropTorsion} induces an inclusion 
\begin{equation}\nonumber
\zs:\Ds(\Ts)^{-1}\plonge\Hsmr.
\end{equation}
In particular, the following assertions are equivalent.
\begin{enumerate}
\item There exists a motivic point $\la\in\Spec^{\mot}\Hsm$ such that
\begin{equation}\nonumber
\z(\la)_{\Iw}:\Delta_{\Oiwa}\left(T_{\la,\Iw}\right)\isom\Oiwa.
\end{equation}
\item For all motivic point $\la\in\Spec^{\mot}\Hsm$
\begin{equation}\nonumber
\z(\la)_{\Iw}:\Delta_{\Oiwa}\left(T_{\la,\Iw}\right)\isom\Oiwa.
\end{equation}
\item The zeta morphism
\begin{equation}\nonumber
\zs:\Ds(\Ts)\tenseur Q(\Hsm)\isom Q(\Hsm)
\end{equation}
induces an isomorphism
\begin{equation}\nonumber
\zs:\Ds(\Ts)\isom\Hsm.
\end{equation}
In other words, the Iwasawa Main Conjecture (conjecture \ref{ConjUniv}) holds for $\Ts$.
\end{enumerate}
\end{Theo}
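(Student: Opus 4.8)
The plan is to isolate the two genuine tasks: assertion $(3)$ is precisely conjecture~\ref{ConjUniv} for $\Ts$, which implies assertion $(2)$ by Theorem~\ref{TheoCompatibility}, and $(2)$ implies $(1)$ since $\Spec^{\mot}\Hsmr$ is non-empty; so it remains to prove the unconditional inclusion $\zs:\Ds(\Ts)^{-1}\plonge\Hsmr$ and the implication $(1)\Rightarrow(3)$.

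To obtain the inclusion --- equivalently, that the invertible fractional ideal $\zs(\Ds(\Ts))$ of $Q(\Hsmr)$ contains $\Hsmr$ --- I would use the Taylor--Wiles--Kisin system $\{B,R^{\square}_{\vide},\Ds^{\square},(R_{n}^{\square},\Delta_{n}^{\square})_{n\in\N}\}$ of Proposition~\ref{PropTWgeneral}. By Lemma~\ref{LemTW} each quotient of the limit ring $R^{\square}_{\infty}$ by a minimal prime is isomorphic to some $\Bbf/\aid$, hence \emph{regular}; since in addition the patched fundamental line $\Delta^{\square}_{\infty}$ is free of rank one over $R^{\square}_{\infty}$ and finite free over $\Ocal[[y_{1},\cdots,y_{h+j}]]$, the ring $R^{\square}_{\infty}$ is finite flat over a regular ring, hence Cohen--Macaulay, and $(y_{1},\cdots,y_{h})$ is a regular sequence with $R^{\square}_{\infty}/(y_{1},\cdots,y_{h})\isom R^{\square}_{\vide}\isom\Hsmr[[Y_{1},Y_{2},Y_{3}]]$. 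Over this regular ring the zeta morphism can be promoted to an Euler system; Kato's Euler system machinery (Theorem~\ref{TheoZetaMorphism} together with the divisibility of \cite[Theorem~12.5]{KatoEuler} at the Zariski-dense set of motivic points) then yields the divisibility of characteristic ideals $\carac H^{2}_{\et}\mid\carac(H^{1}_{\et}/\image\zs)$ for the patched module, equivalently $\zs(\Delta^{\square}_{\infty})^{-1}\subset R^{\square}_{\infty}$. Since the universal fundamental line commutes with arbitrary change of coefficients and $\RGamma_{\et}$ commutes with base change, killing first $(y_{1},\cdots,y_{h})$ and then $(Y_{1},Y_{2},Y_{3})$ transports this inclusion down to $\zs(\Ds(\Ts))^{-1}\subset\Hsmr$, provided the patched zeta morphism is identified with the one of Theorem~\ref{TheoNakamura} after specialisation (see below).

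For $(1)\Rightarrow(3)$, let $\la_{0}\in\Spec^{\mot}\Hsmr$ satisfy $\z(\la_{0})_{\Iw}:\Delta_{\Oiwa}(T_{\la_{0},\Iw})\isom\Oiwa$. The identity of $\Hsmr$ is Iwasawa-suitable by Proposition~\ref{PropTorsion}, $\la_{0}$ is Iwasawa-suitable by \cite{KatoEuler}, and $\Oiwa$ is a domain, so Proposition~\ref{PropDetComSigma} applied to the commutative triangle $\Hsmr\xrightarrow{\Id}\Hsmr\xrightarrow{\la_{0}}\Oiwa$ produces $(x,y)\in\Hsmr^{2}$ with $\zs(\Ds(\Ts))=\tfrac{x}{y}\Hsmr$, together with a commutative square identifying $\z(\la_{0})_{\Iw}(\Delta_{\Oiwa}(T_{\la_{0},\Iw}))$ with the ideal $\la_{0}\!\left(\tfrac{x}{y}\right)\Oiwa$. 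By the hypothesis of $(1)$ this ideal equals $\Oiwa$, so $\la_{0}\!\left(\tfrac{x}{y}\right)\in\Oiwa^{\times}$; by the inclusion just established, $\tfrac{y}{x}$ lies in $\Hsmr$; and as $\la_{0}$ is a local homomorphism of local rings, $\la_{0}\!\left(\tfrac{y}{x}\right)\in\Oiwa^{\times}$ forces $\tfrac{y}{x}\in\Hsmr^{\times}$, whence $\zs(\Ds(\Ts))=\tfrac{x}{y}\Hsmr=\Hsmr$. This is exactly assertion $(3)$, which closes the cycle $(1)\Rightarrow(3)\Rightarrow(2)\Rightarrow(1)$.

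The step I expect to be the main obstacle is the descent used to prove the inclusion: transferring the Euler system divisibility from the regular Taylor--Wiles--Kisin ring $R^{\square}_{\infty}$ to the generally non-regular ring $\Hsmr$. The difficulty is that $R^{\square}_{\infty}$ lies \emph{above} $\Hsmr$, so a zeta element cannot simply be base changed from $\Hsmr$; one must instead verify that the patched zeta morphism specialises to the morphism of Theorem~\ref{TheoNakamura} --- which uses the characterisation of the latter by interpolation at motivic points --- and, above all, that an inclusion of \emph{fundamental lines} is preserved under quotient by the regular sequences cutting out $\Hsmr$ inside $R^{\square}_{\infty}$, a property that fails for divisibilities of characteristic ideals of Selmer modules and is the reason fundamental lines rather than Selmer modules are used throughout. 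This approximate descent is the technical heart of the argument.
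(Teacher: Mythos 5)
The reduction of the equivalences to the unconditional inclusion $\zs:\Ds(\Ts)^{-1}\plonge\Hsmr$ (via Theorem~\ref{TheoCompatibility} for $(3)\Rightarrow(2)$, density of motivic points for $(2)\Rightarrow(1)$, and Proposition~\ref{PropDetComSigma} together with locality of $\la_{0}$ for $(1)\Rightarrow(3)$) is correct and is essentially what the paper does. However, your proof of the inclusion itself has a genuine gap, and it is not the approach the paper takes.

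You assert that over $R^{\square}_{\infty}$ ``the zeta morphism can be promoted to an Euler system'' and that Kato's machinery together with the Zariski density of motivic points yields $\zs\bigl(\Delta^{\square}_{\infty}\bigr)^{-1}\subset R^{\square}_{\infty}$, from which you then descend. Two things go wrong. First, there is no zeta morphism over $R^{\square}_{\infty}$: Nakamura's morphism (Theorem~\ref{TheoNakamura}) lives over $\Hsmr$, and at each finite Taylor--Wiles level the relevant Hecke algebra $\Hecke^{\Si\cup Q_{n}}_{\mr}$ carries its own zeta morphism, but the patching procedure replaces the modules $\Delta^{\square}_{n}$ by a choice of isomorphic abstract rank-one modules $L_{m}$ compatible only modulo the ideals $\cid_{m}$; the resulting limit $\Delta^{\square}_{\infty}$ is a free rank-one $R^{\square}_{\infty}$-module but has no intrinsic Galois-cohomological interpretation, let alone an Euler system structure interpolating Kato's classes. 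Second, even granting some trivialization of $\bigl(\Delta^{\square}_{\infty}\bigr)^{-1}$, the argument ``Kato's divisibility holds at a Zariski-dense set of motivic points, hence over the whole regular ring'' fails because $\Spec R^{\square}_{\infty}$ has no Zariski-dense locus of motivic points: only the rather thin locus factoring through some finite-level $R^{\square}_{n}$ or through $\Hsmr[[Y_{1},Y_{2},Y_{3}]]$ carries an automorphic interpretation.

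The paper's proof runs in exactly the opposite logical direction, and this is not cosmetic. Assuming for contradiction that $\zs$ does \emph{not} induce an inclusion $\Ds(\Ts)^{-1}\plonge\Hsmr$, Lemma~\ref{LemRemonteBis} shows that this failure propagates to the patched system, producing compatible trivializations of $\bigl(\Delta^{\square}_{a_{n}}\bigr)^{-1}$ with $x_{a_{n}}/y_{a_{n}}\notin R^{\square}_{a_{n}}$. One then passes to the limit and uses the regularity of each irreducible component of $R^{\square}_{\infty}$ (Lemma~\ref{LemTW}, Proposition~\ref{PropTWgeneral}) \emph{not} to run an Euler system over $R^{\square}_{\infty}$, but in Lemma~\ref{LemDescendBis} to find a specialization $\psi:\Hsmprime\fleche S_{\Iw}$ of an honest (finite-level) Hecke algebra, factoring through one irreducible component, valued in a power-series ring over a DVR, at which the failure persists, while simultaneously $\zs(\psi)$ is nonzero and $H^{2}(G_{\qp},T_{\psi})$ is finite. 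At such a $\psi$ one has a genuine zeta morphism (descended from Nakamura's over $\Hsmprime$), the coefficient ring is two-dimensional regular, and the hypotheses of Kato's Euler system theorem (\cite[Theorem~0.8]{KatoEulerOriginal}) can be verified, yielding the inclusion $\Ds(T_{\psi})^{-1}\plonge S_{\Iw}$ and hence a contradiction. The regularity is used only to make this descent possible, never to assert a divisibility over $R^{\square}_{\infty}$ itself. Your paragraph correctly identifies the descent as the crux and correctly observes that $R^{\square}_{\infty}$ sits \emph{above} $\Hsmr$ so that no simple base change is available, but the remedy you suggest (interpreting the patched trivialization as a zeta morphism specializing to Nakamura's and pushing Kato's divisibility down) is precisely what the paper's contrapositive strategy is engineered to avoid having to prove.
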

We record the following immediate corollary.
\begin{Cor}\label{CorKatoStronger}
Under the hypotheses of theorem \ref{TheoCorps}, there is a divisibility
\begin{equation}\label{EqDivCorKato}
\carac_{\Oiwa}H^{2}_{\et}(\Z[1/p],T(f)_{\Iw})\mid\carac_{\Oiwa}H^{1}_{\et}(\Z[1/p],T(f)_{\Iw})/\Oiwa\cdot\z(f)_{\Iw}
\end{equation}
for all $f$ attached to a motivic point of $\la:\Hsmr\fleche\Ocal$
\end{Cor}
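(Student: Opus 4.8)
The plan is to read the corollary straight off Theorem~\ref{TheoCorps}: specialise the inclusion it provides to a single motivic point, descend it to the Iwasawa algebra, and rephrase the resulting inclusion of fundamental lines in the classical language of characteristic ideals. All the substance sits in Theorem~\ref{TheoCorps}; what is left is bookkeeping with the fundamental line.

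First I would fix the motivic point $\la\colon\Hsmr\fleche\Ocal$ attached to $f$ and write $\la_{\Iw}\colon\Hsmr\fleche\Oiwa$ for the composite of $\la$ with the structural embedding $\Ocal\plonge\Oiwa$, so that $T(f)_{\Iw}=\Ts\tenseur_{\Hsmr,\la_{\Iw}}\Oiwa$. Since the universal fundamental line $\Ds(\Ts)$ commutes with arbitrary change of coefficients and $\RGamma_{\et}(\Z[1/\Si],-)$ commutes with $-\Ltenseur_{\Hsmr,\la_{\Iw}}\Oiwa$, there is a canonical isomorphism $\Ds(\Ts)\tenseur_{\Hsmr,\la_{\Iw}}\Oiwa\isocan\Delta_{\Oiwa}(T(f)_{\Iw})$; and applying the determinant functor to the compatibility diagram \eqref{DiagNaka} of Theorem~\ref{TheoNakamura}, the base change of the trivialisation $\zs$ along $\la_{\Iw}$ is the trivialisation $\z(f)_{\Iw}$ of $\Delta_{\Oiwa}(T(f)_{\Iw})$ induced by Kato's $\Si$-partial zeta morphism of Theorem~\ref{TheoZetaMorphism}. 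Now $\Ds(\Ts)^{-1}$ is a free $\Hsmr$-module of rank one, so the inclusion $\zs\colon\Ds(\Ts)^{-1}\plonge\Hsmr$ of Theorem~\ref{TheoCorps} says simply that $\zs$ carries a basis of $\Ds(\Ts)^{-1}$ into $\Hsmr$; base changing along $\la_{\Iw}$, and using that $\z(f)_{\Iw}$ is non-zero at $\la$ (part of \cite[Theorem~12.4]{KatoEuler}), I obtain $\z(f)_{\Iw}(\Delta_{\Oiwa}(T(f)_{\Iw})^{-1})\subseteq\Oiwa$ as fractional ideals of $\Oiwa$.

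It then remains to identify this fractional ideal. Abbreviate $H^{i}=H^{i}_{\et}(\Z[1/\Si],T(f)_{\Iw})$. Under Assumption~\ref{HypMain}, $\RGamma_{\et}(\Z[1/\Si],T(f)_{\Iw})$ has cohomology concentrated in degrees $1$ and $2$ with $H^{1}$ free of rank one over $\Oiwa$ and $H^{2}$ torsion (recalled in the proof of Proposition~\ref{PropTorsion}, following \cite{KatoEuler}); so, via the canonical trivialisation over $Q(\Oiwa)$ of the determinant of a torsion module, $\Det^{-1}_{\Oiwa}\RGamma_{\et}(\Z[1/\Si],T(f)_{\Iw})\isocan\Det_{\Oiwa}H^{1}\tenseur_{\Oiwa}\carac_{\Oiwa}(H^{2})$. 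Moreover $T(f)_{\Iw}(-1)^{+}$ is free of rank one and $\z(f)_{\Iw}\colon T(f)_{\Iw}(-1)^{+}\fleche H^{1}$ is injective with torsion cokernel $H^{1}/\Oiwa\cdot\z(f)_{\Iw}$, so its determinant identifies $\Det_{\Oiwa}T(f)_{\Iw}(-1)^{+}\tenseur_{\Oiwa}\Det^{-1}_{\Oiwa}H^{1}$ with the fractional ideal $\carac_{\Oiwa}(H^{1}/\Oiwa\cdot\z(f)_{\Iw})$. Combining, $\z(f)_{\Iw}$ carries $\Delta_{\Oiwa}(T(f)_{\Iw})^{-1}$ onto $\carac_{\Oiwa}(H^{2})^{-1}\cdot\carac_{\Oiwa}(H^{1}/\Oiwa\cdot\z(f)_{\Iw})$, and the inclusion of the previous paragraph then reads $\carac_{\Oiwa}(H^{1}/\Oiwa\cdot\z(f)_{\Iw})\subseteq\carac_{\Oiwa}(H^{2})$, which is exactly the asserted divisibility \eqref{EqDivCorKato} for $S=\Si$; the statement with $\Z[1/p]$ is equivalent, the two characteristic ideals on each side changing by the same Euler factors at the primes of $\Si\setminus\{p\}$.

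I do not expect any real obstacle: granted Theorem~\ref{TheoCorps}, the corollary is just a matter of transporting the fundamental line through base change to a motivic point and keeping track of the direction of the inclusion (this is where it matters that Theorem~\ref{TheoCorps} is phrased with $\Ds(\Ts)^{-1}$ rather than $\Ds(\Ts)$). The one place where the running hypotheses genuinely enter is the structural description of $\RGamma_{\et}(\Z[1/\Si],T(f)_{\Iw})$ at the motivic point — concentration in degrees $1$ and $2$, freeness of $H^{1}$, torsionness of $H^{2}$ — which is what licenses rewriting $\Det^{-1}_{\Oiwa}\RGamma_{\et}$ as $\Det_{\Oiwa}H^{1}\tenseur_{\Oiwa}\carac_{\Oiwa}(H^{2})$; this is precisely the input from \cite{KatoEuler} already recalled in the proof of Proposition~\ref{PropTorsion}, available at every motivic point of $\Hsmr$ under Assumption~\ref{HypMain}.
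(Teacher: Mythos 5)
Your proof is correct and takes the same route as the paper: invoke the inclusion $\zs\colon\Ds(\Ts)^{-1}\plonge\Hsmr$ furnished by Theorem~\ref{TheoCorps}, base change along the motivic point, and unwind the resulting inclusion of fractional ideals into the characteristic‐ideal divisibility. The paper's own proof is two lines (it simply cites the inclusion and the compatibility diagram for $\zs$ under $-\tenseur_{\Hsmr,\la}\Oiwa$); you have filled in the determinant bookkeeping — the identification of $\Det^{-1}\RGamma$ as $\Det H^1\tenseur\carac(H^2)$, and of $\Det T(f)_{\Iw}(-1)^+\tenseur\Det^{-1}H^1$ as $\carac(H^1/\Oiwa\cdot\z(f)_{\Iw})$ — and you have also explicitly flagged the passage from $\Z[1/\Si]$ to $\Z[1/p]$ via matching Euler factors, a point the paper's proof leaves tacit.
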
 
In \cite[Theorem 12.5]{KatoEuler}, divisibility \eqref{EqDivCorKato} is proved except when $\rho_{f}|G_{\qp}$ has ordinary reduction but not potential good reduction.
\begin{proof}
Theorem \ref{TheoCorps} shows that the image of $\Ds(\Ts)^{-1}$ through $\zs$ is included in $\Hsmr$. The corollary then follows at once from the commutativity of the diagram
\begin{equation}\nonumber
\xymatrix{
\ar[d]_{-\tenseur_{\Hsmr,\la}\Oiwa}\Ds^{-1}\ar[rr]^{\zs}&&\Hsmr\ar[d]^{\la}\\
\Delta(f)^{-1}_{\Iw}\ar[rr]^{\z(f)_{\Iw}}&&\Frac(\Oiwa).
}
\end{equation}
\end{proof}  
The following corollary exploits the compatibility of the Galois representations attached to two motivic points of $\Ts$ to provide examples of eigencuspforms $g$ with non-trivial $H^{2}_{\et}(\Z[1/p],T(g)_{\Iw})$. This property was noticed in \cite{EmertonPollackWeston} under the supplementary hypotheses that $f$ and $g$ are both $p$-ordinary and that one of them has trivial $\mu$-invariant. Attentive readers will notice that the full strength of theorem \ref{TheoCorps} is used only once in the proof: to ensure that the Iwasawa Main Conjecture is true for the second eigencuspform $g$. When this is known beforehand - for instance because the hypotheses of \cite[Theorem 1.1]{FouquetPMB} or the hypotheses of \cite[Theorem 1.7]{FouquetWan} are satisfied - then the assumptions of the corollary may be relaxed accordingly.
\begin{Cor}\label{CorEPW}
Under the hypotheses of theorem \ref{TheoCorps}, let
\begin{equation}\nonumber
\la:\Hsmr\fleche\Ocal,\ \la':\Hsmr\fleche\Ocal
\end{equation}
be two motivic points attached to two eigencuspforms $f\in S_{k}(\Gamma_{1}(N))$ and $g\in S_{k'}(\Gamma_{1}(N'))$. Assume that the following holds.
\begin{enumerate}
\item Kato's class $\z(f)_{\Iw}$ generates $\Hun_{\et}(\Z[1/p],T(f)_{\Iw})$.
\item There exists $\ell|N$ such that the Euler factor attached to $T(f)^{*}(1)$ is not a $p$-unit.
\item For all $\ell|N'$, the Euler factor attached to $T(g)^{*}(1)$ is a $p$-unit.
\end{enumerate}
Then $H^{2}_{\et}(\Z[1/p],T(g)_{\Iw})$ is not trivial and does not contain any non-zero pseudo-null submodules and $\z(g)_{\Iw}$ does not generate $\Hun_{\et}(\Z[1/p],T(g)_{\Iw})$.
\end{Cor}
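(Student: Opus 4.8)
The plan is to use hypothesis~1 to force the Iwasawa Main Conjecture for $f$, to propagate it to $g$ through Theorem~\ref{TheoCorps}, and then to read off the structure of the relevant cohomology from the behaviour of Euler factors, exploiting that the residual representations of $f$ and $g$ are both $\rhobar$. First I would argue that hypothesis~1 already implies the Iwasawa Main Conjecture for $T(f)_{\Iw}$: since $H^{2}_{\et}(\Z[1/p],T(f)_{\Iw})$ is a torsion $\Oiwa$-module with no non-zero pseudo-null (equivalently, finite) submodule --- a standard consequence of assumption~\ref{HypMain} --- and since Corollary~\ref{CorKatoStronger} gives $\carac_{\Oiwa}H^{2}_{\et}(\Z[1/p],T(f)_{\Iw})\mid\carac_{\Oiwa}(H^{1}_{\et}(\Z[1/p],T(f)_{\Iw})/\Oiwa\cdot\z(f)_{\Iw})=(1)$, the module $H^{2}_{\et}(\Z[1/p],T(f)_{\Iw})$ vanishes; hence $\RGamma_{\et}(\Z[1/p],T(f)_{\Iw})$ is concentrated in degree~$1$ with $H^{1}$ free of rank one over $\Oiwa$ and generated by $\z(f)_{\Iw}$, so $\z(f)_{\Iw}$ trivialises $\Delta_{\Oiwa}(T(f)_{\Iw})$ (the $\Z[1/p]$- and $\Z[1/\Si]$-normalisations of this fundamental line agree canonically, Kato's $\Si$-partial and $\{p\}$-partial Euler systems differing only by the Euler factors at $\Si\setminus\{p\}$, which here are known to behave well by hypothesis~1). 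This is assertion~(1) of Theorem~\ref{TheoCorps}, so by that theorem the Universal Iwasawa Main Conjecture holds for $\Ts$, and in particular the Iwasawa Main Conjecture holds at the motivic point $\la'$, i.e. for $T(g)_{\Iw}$.

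Next I would extract the $\Si$-imprimitive cohomology of $f$. The localisation triangle relating $\RGamma_{\et}(\Z[1/\Si],T(f)_{\Iw})$ and $\RGamma_{\et}(\Z[1/p],T(f)_{\Iw})$, whose third term is $\bigoplus_{\ell\in\Si\setminus\{p\}}\RGamma(\hat{\Z},T(f)_{\Iw}^{I_{\ell}}(-1))[-2]$, combined with $H^{2}_{\et}(\Z[1/p],T(f)_{\Iw})=0$, with the vanishing of $H^{3}_{\et}(\Z[1/p],-)$ (for $p$ odd only the real place could contribute, and its contribution is $2$-torsion), and with $H^{0}(\hat{\Z},T(f)_{\Iw}^{I_{\ell}}(-1))=0$ (a torsion submodule of a free $\Oiwa$-module), yields a canonical isomorphism $H^{2}_{\et}(\Z[1/\Si],T(f)_{\Iw})\isom\bigoplus_{\ell\in\Si\setminus\{p\}}(T(f)_{\Iw}^{I_{\ell}}(-1))_{\Fr_{\ell}}$, a torsion $\Oiwa$-module whose characteristic ideal is $\prod_{\ell\in\Si\setminus\{p\}}\Eul_{\ell}(f)$, where $\Eul_{\ell}(f)$ is the Euler factor of $T(f)^{*}(1)$ at $\ell$ (up to the unit coming from the $\Gal(\C/\R)$-conventions). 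By hypothesis~2 one of these factors, at a prime $\ell_{0}\mid N$, is not a $p$-unit, hence lies in the maximal ideal $\mgot_{\Oiwa}$; thus $H^{2}_{\et}(\Z[1/\Si],T(f)_{\Iw})$ is non-zero, and in fact non-zero modulo $\mgot_{\Oiwa}$.

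Then I would transfer this to $g$ by a congruence argument. Since $T(\bullet)_{\Iw}$ is $\Oiwa$-flat with $T(\bullet)_{\Iw}\tenseur_{\Oiwa}k=\rhobar$ ($k$ the residue field) for $\bullet\in\{f,g\}$, one has $\RGamma_{\et}(\Z[1/\Si],T(\bullet)_{\Iw})\Ltenseur_{\Oiwa}k\isom\RGamma_{\et}(\Z[1/\Si],\rhobar)$; as this complex is concentrated in degrees $[1,2]$ (assumption~\ref{HypMain}) with vanishing $H^{3}$, taking $H^{2}$ commutes with the reduction, whence $H^{2}_{\et}(\Z[1/\Si],T(f)_{\Iw})\tenseur_{\Oiwa}k\cong H^{2}_{\et}(\Z[1/\Si],\rhobar)\cong H^{2}_{\et}(\Z[1/\Si],T(g)_{\Iw})\tenseur_{\Oiwa}k$. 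The left-hand side is non-zero by the previous paragraph, so Nakayama's lemma gives $H^{2}_{\et}(\Z[1/\Si],T(g)_{\Iw})\neq0$. Now I would invoke hypothesis~3: at every prime $\ell\mid N'$ the Euler factor $\Eul_{\ell}(g)$ is a $p$-unit, and at the remaining primes of $\Si\setminus\{p\}$ (where $g$ is unramified) $\Eul_{\ell}(g)$ reduces modulo $\mgot_{\Oiwa}$ to a quantity depending only on $\rhobar$, equal to that attached to $f$, so $\prod_{\ell\in\Si\setminus\{p\}}\Eul_{\ell}(g)$ is a unit; consequently the corresponding local terms $(T(g)_{\Iw}^{I_{\ell}}(-1))_{\Fr_{\ell}}$ vanish, and the localisation triangle for $g$ --- together with the Iwasawa Main Conjecture for $g$ from the first step, which identifies $\carac_{\Oiwa}H^{2}_{\et}(\Z[1/p],T(g)_{\Iw})$ with $\carac_{\Oiwa}H^{2}_{\et}(\Z[1/\Si],T(g)_{\Iw})$ divided by that unit product --- gives $H^{2}_{\et}(\Z[1/p],T(g)_{\Iw})\cong H^{2}_{\et}(\Z[1/\Si],T(g)_{\Iw})\neq0$. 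It has no non-zero pseudo-null submodule by the standard fact recalled above, and since $\carac_{\Oiwa}H^{2}_{\et}(\Z[1/p],T(g)_{\Iw})\mid\carac_{\Oiwa}(H^{1}_{\et}(\Z[1/p],T(g)_{\Iw})/\Oiwa\cdot\z(g)_{\Iw})$ by Corollary~\ref{CorKatoStronger} at $\la'$ and the left-hand side is $\neq(1)$, Kato's class $\z(g)_{\Iw}$ does not generate $H^{1}_{\et}(\Z[1/p],T(g)_{\Iw})$.

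The main obstacle I expect is the Euler-factor bookkeeping underlying the second and third steps: identifying $\Eul_{\ell}$ precisely with the characteristic ideal of the local Iwasawa cohomology $(T(\bullet)_{\Iw}^{I_{\ell}}(-1))_{\Fr_{\ell}}$, checking that the set of primes $\ell\in\Si\setminus\{p\}$ at which the Euler factor (of either $f$ or $g$) is a non-unit is governed by $\rhobar$ alone, and reconciling the $\{p\}$-primitive and $\Si$-imprimitive normalisations of Kato's Euler system so that the statements about $\z(\bullet)_{\Iw}$ transport correctly along the localisation triangles --- this last reconciliation being exactly the point at which the Iwasawa Main Conjecture for $g$ supplied by Theorem~\ref{TheoCorps} is genuinely used. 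A secondary, more routine, input is that $H^{2}_{\Iw}(\Z[1/p],-)$ carries no non-zero pseudo-null $\Oiwa$-submodule and that $H^{1}_{\et}(\Z[1/p],T(\bullet)_{\Iw})$ is free of rank one, both of which follow from assumption~\ref{HypMain} by the usual depth and Poitou--Tate arguments.
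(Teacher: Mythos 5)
Your proposal follows the paper's overall strategy closely --- establish the Iwasawa Main Conjecture for $f$ from hypothesis~1, propagate it through Theorem~\ref{TheoCorps} to the universal deformation and hence to $g$, then read off the structure of $H^{2}_{\et}$ via Euler-factor bookkeeping --- but with one genuine streamlining. Where you invoke Corollary~\ref{CorKatoStronger} to obtain the divisibility $\carac H^{2}\mid\carac(H^{1}/\Oiwa\cdot\z(f)_{\Iw})$, the paper first invokes Kato's original Theorem~12.5, which fails to cover the case where $\rho_{f}|_{G_{\qp}}$ is ordinary without potential good reduction; this forces the paper into an extended second argument (about a page) working over an irreducible component $\Hsaid$ with the corrected fundamental line $\Delta(T_{\pi})$ of \cite{FouquetX}, re-deriving the needed divisibility by hand. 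Since Corollary~\ref{CorKatoStronger} was proved earlier in the paper precisely to close Kato's exceptional case and depends only on the unconditional inclusion $\zs(\Ds^{-1})\subset\Hsmr$ from Theorem~\ref{TheoCorps} (so no circularity), your route legitimately bypasses that whole detour.

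The remaining differences are cosmetic. Where you transfer $H^{2}\neq0$ from $f$ to $g$ by reducing both mod~$\mgot_{\Oiwa}$ to $\RGamma_{\et}(\Z[1/\Si],\rhobar)$, the paper passes through $\Ts$ and applies Nakayama twice; these are the same mechanism. Where you compute $H^{2}_{\et}(\Z[1/\Si],T(f)_{\Iw})$ via the localization triangle, the paper instead uses the identity $\zs(f)_{\Iw}=\bigl(\prod_{\ell\in\Si^{(p)}}\Eul_{\ell}\bigr)\z(f)_{\Iw}$ together with the IMC for the $\Si$-partial zeta morphism --- which amounts to the same Euler-factor bookkeeping but avoids committing to the exact shape of the local terms (your claims that $H^{0}(\hat{\Z},T(f)_{\Iw}^{I_\ell}(-1))=0$ and that the third term of the triangle is $\bigoplus_{\ell}\RGamma(\hat{\Z},T_{\Iw}^{I_{\ell}}(-1))[-2]$ are plausible but would need to be checked; the paper's phrasing is designed to sidestep precisely this). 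For the pseudo-null assertion, the paper does not treat it as a ``standard consequence of assumption~\ref{HypMain}'': it proves it for $H^{2}_{\et}(\Z[1/\Si],T(g)_{\Iw})$ by first using hypothesis~3 and local Tate duality to get $\bigoplus_{\ell\in\Si^{(p)}}H^{2}(G_{\ql},T(g))=0$, whence $H^{2}_{\et}=H^{2}_{c}$, and only then invoking \cite[Proposition 9.3.1]{SelmerComplexes}; you should make this step explicit rather than delegate it to ``depth and Poitou--Tate arguments,'' since it relies on the Euler factors being $p$-units (so that the local $H^{2}$'s vanish) and the implicit vanishing of $H^{2}(G_{\qp},T(g)_{\Iw})$ coming from assumption~\ref{HypMain}~(\ref{ItemrhobarEnP}), and this is the one place where hypothesis~3 enters the no-pseudo-null claim.
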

\begin{proof}
Let us first denote by $\Ocal_{n}$ the ring of integers of $\Q_{n}$. Then
\begin{align}\nonumber
\Hun_{\et}(\Z[1/p],T(f)_{\Iw})&=\limproj{n}\ \Hun_{\et}(\Ocal_{n}[1/p],T(f))=\limproj{n}\ \Hun_{\et}(\Ocal_{n}[1/\Si],T(f))\\\nonumber
&=\Hun_{\et}(\Z[1/\Si],T(f)_{\Iw})
\end{align}
where the second equality comes from the fact that for all finite sub-extension $F$ of $\Q_{\infty}$, the image in $\Hun_{\et}(\Ocal_{F}[1/\Si],T(f))$ of a class in the inverse limit of the $\Hun_{\et}(\Ocal_{n}[1/\Si],T(f))$ is unramified at all primes not dividing $p$ (see for instance \cite[Lemma 8.5 (2)]{KatoEuler}).

In addition to our other assumptions, suppose first that $\rho_{f}|G_{\qp}$ has no $G_{\qp}$-stable quotient isomorphic to $\chi\chi_{\cyc}^{-1}$ for $\chi$ a finite order character of $\Gamma$. Then \cite[Theorem 12.5]{KatoEuler} yields a divisibility 
\begin{equation}\label{EqDivKato}
\carac_{\Oiwa}H^{2}_{\et}(\Z[1/p],T(f)_{\Iw})\mid\carac_{\Oiwa}H^{1}_{\et}(\Z[1/p],T(f)_{\Iw})/\Oiwa\cdot\z(f)_{\Iw}.
\end{equation}
As $\z(f)_{\Iw}$ is by assumption a basis of $\Hun_{\et}(\Z[1/p],T(f)_{\Iw})$, then the previous divisibility is an equality and so the zeta morphism yields an isomorphism
\begin{equation}\nonumber
\z(f)_{\Iw}:\Delta_{\Oiwa}(f)\isom\Oiwa.
\end{equation}

By construction, 
\begin{equation}\nonumber
\zs(f)_{\Iw}=\left(\produit{\ell\in\Si^{(p)}}{}\Eul_{\ell}(T(f)_{\Iw}^{*}(1))\right)\z(f)_{\Iw}
\end{equation}
The hypothesis on $\Eul_{\ell}(T(f)^{*}(1))$ for $\ell\in\Si^{(p)}$ thus implies that
\begin{equation}\nonumber
\Oiwa\cdot\zs(f)_{\Iw}\subsetneq\Oiwa\cdot\z(f)_{\Iw}=\Hun_{\et}(\Z[1/\Si],T(f)_{\Iw})
\end{equation}
or equivalently that
\begin{equation}\nonumber
\carac_{\Oiwa}\Hun_{\et}(\Z[1/\Si],T(f)_{\Iw})/\Oiwa\cdot\zs(f)_{\Iw}\neq\Oiwa.
\end{equation}
According to the Iwasawa Main Conjecture, then
\begin{equation}\nonumber
\carac_{\Oiwa}H^{2}_{\et}(\Z[1/\Si],T(f)_{\Iw})\neq\Oiwa.
\end{equation}
In particular, $H^{2}_{\et}(\Z[1/\Si],T(f)_{\Iw})$ is not the trivial torsion $\Oiwa$-module. The complexes $\RGamma_{\et}(\Z[1/\Si],\Ts)$ and $\RGamma_{\et}(\Z[1/\Si],T(g)_{\Iw})$ have cohomology concentrated in degree $[1,2]$ and commute with arbitrary base-change. Nakayama's lemma thus shows first that
\begin{equation}\nonumber
H^{2}_{\et}(\Z[1/\Si],\Ts)\neq0
\end{equation}
then that
\begin{equation}\nonumber
H^{2}_{\et}(\Z[1/\Si],T(g)_{\Iw})\neq0.
\end{equation}
The hypothesis on $\Eul_{\ell}(T(g)^{*}(1))$ for $\ell\in\Si^{(p)}$ implies by Tate's duality that 
\begin{equation}\nonumber
\sommedirecte{\ell\in\Si^{(p)}}{}H^{2}(G_{\ql},T(g))=0.
\end{equation}
The excision triangle relating étale cohomology with cohomology with compact support then yields the isomorphism
\begin{equation}\nonumber
H^{2}_{\et}(\Z[1/\Si],T(g)_{\Iw})=H^{2}_{c}(\Z[1/\Si],T(g)_{\Iw}).
\end{equation}
Finally, the $\Oiwa$-module $H^{2}_{\et}(\Z[1/\Si],T(g)_{\Iw})$ is torsion. Hence, all the hypotheses of \cite[Proposition 9.3.1]{SelmerComplexes} are satisfied. So $H^{2}_{\et}(\Z[1/\Si],T(g)_{\Iw})$ has no non-zero pseudo-null submodules. As it is non-trivial, it has non-trivial characteristic ideal. Hence
\begin{equation}\nonumber
\carac_{\Oiwa}H^{2}_{\et}(\Z[1/\Si],T(g)_{\Iw})\neq\Oiwa.
\end{equation}
On the other hand
\begin{equation}\nonumber
\z(g)_{\Iw}:\Delta_{\Oiwa}(f)\isom\Oiwa
\end{equation}
by theorem \ref{TheoCorps}. This entails that $\z(g)_{\Iw}$ is not a generator of $\Hun_{\et}(\Z[1/p],T(g)_{\Iw})$.

In the remaining part of this proof, we show for the sake of completeness that everything above carries over even in the absence of our supplementary assumption. A technical trick to bypass the fact that some complexes appearing naturally in the argument might not be perfect is required, making the proof slightly technical. As relaxing the assumption only provides a minimal improvement on the result, the reader is advised to skip the rest of this proof on first reading.

Now our supplementary assumption is relaxed (if our supplementary assumption does not hold, then $\rho_{f}|G_{\qp}$ is ordinary but does not have potential good reduction; this fact will however not be used as the results proved below are of independent interests even when our supplementary assumption holds). Let $\aid\in\Spec\Hsm$ be a minimal prime such that $\la$ factors through $\Hsaid\eqdef\Hsm/\aid$ and denote by $\pi:\Hsmr\fleche\Hsaid$ the corresponding \Iwagood specialization. By \cite[Proposition 3.14]{FouquetWan}, there exists a zeta morphism
\begin{equation}\label{EqZetaIrr}
\zaid:T_{\pi}(-1)^{+}\fleche\Hun_{\et}(\Z[1/p],T_{\pi})
\end{equation}
equal to 
\begin{equation}\label{EqZaidZs}
\zaid=\left(\produit{\ell\in\Sp}{}\Eul_{\ell}(T_{\pi}^{*}(1))\right)^{-1}\zs(\pi)
\end{equation}
and such that the following diagram commutes 
\begin{equation}\label{EqZetaComposanteIrr}
\xymatrix{
T_{\pi}(-1)^{+}\ar[d]\ar[rr]^{\zaid}&&\Hun_{\et}(\Z[1/p],T_{\pi})\ar[d]\\
T(f)_{\Iw}(-1)^{+}\ar[rr]^{\ziwa}&&\Hun_{\et}(\Z[1/p],T(f)_{\Iw})
}
\end{equation}
for all motivic point $\lambda({f}):\Hsm\fleche\Ocal$ factoring through $\pi$ (here $\z(f)_{\Iw}$ denotes the zeta morphism induced by Kato's class). In \cite[Definition 2.10]{FouquetX} is defined a free $\Hsaid$-module 
\begin{equation}\label{EqDefDeltaInt}
\Delta(T_{\pi})\eqdef\Ds(T_{\pi})\tenseur\produittenseur{\ell\in\Sigma^{(p)}}{}\left(\Det_{\Hsaid}\RGamma(G_{\ql},T_{\pi})\tenseur_{\Hsaid}\Det^{-1}_{\Hsaid}\frac{\Hsaid}{\Eul_{\ell}(T_{\pi})\Hsaid}\right)
\end{equation}
which is a free of rank 1, which is compatible with base change at motivic points in the sense that the natural map
\begin{equation}\label{EqControlIrr}
\Delta(T_{\pi})\tenseur_{\Hsaid,\la}\Oiwa\fleche\Delta(\Tla)_{\Iw}
\end{equation}
is an isomorphism for all motivic point $\la:\Hsmr\fleche\Ocal$ factoring through $\Hsaid$ (see \cite[Theorem 4.4]{FouquetX} though the particular case needed here is an elementary instance of this result). The free $\Hsaid$-module $\Delta(T_{\pi})$ sis canonically identified with
\begin{equation}\label{EqMachinDet}
\Det^{-1}_{\Hsaid}\RGamma\left(\Z[1/p],T_{\pi}\right)\tenseur_{\Hsaid}\Det^{-1}_{\Hsaid}T_{\pi}(-1)^{+}
\end{equation}
when the object \eqref{EqMachinDet} is defined (the potential problem being that the complex $\RGamma\left(\Z[1/p],T_{\pi}\right)$ might not be a perfect complex of $\Hsaid$-modules as $H^{0}(I_{\ell},-)$ does not send perfect complexes to prefect complexes in general).

Combining \eqref{EqZaidZs}, \eqref{EqDefDeltaInt} and Tate duality shows that the image of $\Ds(T_{\pi})$ inside $\Frac(\Hsaid)$ through $\zs(\pi)$ is equal to the image of $\Delta(T_{\pi})$ through $\z(\aid)$. We compute this image. By construction and our assumption \ref{HypMain}, the map
\begin{equation}\nonumber
\Hun_{\et}\left(\Z[1/p],T_{\pi}\right)\tenseur_{\Hsaid,\la}\Oiwa\fleche\Hun_{\et}\left(\Z[1/p],T(f)_{\Iw}\right)
\end{equation}
is injective. Its image contains the image of $\z(\aid)$ so contains $\Oiwa\cdot\z(f)_{\Iw}$ by \eqref{EqZetaComposanteIrr}. So it is surjective by our assumption that $\z(f)_{\Iw}$ generates $\Hun_{\et}\left(\Z[1/p],T(f)_{\Iw}\right)$. It is thus an isomorphism, which means that the zeta morphism \eqref{EqZetaIrr} is itself an isomorphism. In particular, $\Hun_{\et}(\Z[1/\Si],T_{\pi})$ is free of rank 1. As the complex $\RGamma_{\et}(\Z[1/\Si],T_{\pi})$ is a perfect complex of $\Hsaid$-modules, this means that $H^{2}_{\et}(\Z[1/\Si],T_{\pi})$ is itself a perfect complex of $\Hsaid$-modules and that the image of  $\Ds(T_{\pi})$ inside $\Frac(\Hsaid)$ through $\zs(\pi)$ is equal to
\begin{equation}\label{EqImageDeZs}
\left(\Det^{-1}_{\Hsaid}H^{2}_{\et}(\Z[1/\Si],T_{\pi})\right)\Hsaid.
\end{equation}
Let $\la':\Hsm\fleche\Ocal$ be a motivic point which factors through $\Hsaid$. Then \eqref{EqZetaComposanteIrr}, \eqref{EqControlIrr} and \eqref{EqImageDeZs} together show that the image of $\Delta(T_{\la'})$ through $\z(\la')$ is equal to 
\begin{equation}\nonumber
\left(\Det^{-1}_{\Hsaid}H^{2}_{\et}(\Z[1/\Si],T_{\pi})\right)\tenseur_{\Hsaid,\la'}\Oiwa.
\end{equation} 
As $H^{2}_{\et}(\Z[1/\Si],T_{\pi})$ is the last non-zero cohomology group of a complex which commutes with $-\Ltenseur_{\Hsaid,\la'}\Oiwa$, this image is thus
\begin{equation}\label{EqInclusionPasStricte}
\left(\Det^{-1}_{\Hsaid}H^{2}_{\et}(\Z[1/\Si],T_{\la'})\right)\Oiwa\subset\Oiwa.
\end{equation}
Hence, we have obtained that for all such motivic point, the image of $\Delta(T_{\la'})$ through $\z(\la')$ is included inside $\Oiwa$. The set of $\la'$ such that $T_{\la'}$ does not satisfy our supplementary hypothesis is the set of eigencuspforms $f$ of weight 2 with ordinary reduction but not potential good reduction whose system of eigenvalues factors through $\Hsaid$. This set is empty if $\rhobar|G_{\qp}$ is irreducible. Otherwise, Hida theory guarantees that there exists motivic points of $\Spec\Hsaid$ of weight $k>2$. Hence, there exists such a $\la'$ satisfying our supplementary hypothesis. In that case, the divisibility \eqref{EqDivKato} holds, which means that the image of $\Delta(T_{\la'})$ through $\z(\la')$ is an invertible module $\frac{1}{y}\Oiwa$ generated by a non-zero element whose inverse is in $\Oiwa$. Combining this two property, we see that $y$ is a unit. This means that the containment \eqref{EqInclusionPasStricte} cannot be strict, and hence that the image of $\Ds(T_{\pi})$ through $\zs(\pi)$ is $\Hsaid$; in other words, the Iwasawa Main Conjecture holds for $T_{\pi}$. By compatibility of the Iwasawa Main Conjecture at \Iwagood points, in particular at motivic points, we conclude that the Iwasawa Main Conjecture holds for all motivic points factoring through $\pi$.

\end{proof}
\subsection{Numerical examples}\label{SubExample}
This section contains two examples of numerical properties of special values of $L$-functions of eigencuspforms. In both cases, $p$-adic properties of Kato's class and of the second étale cohomology group of a modular motive are predicted by studying congruences with another eigencuspform. These phenomena are accounted for by congruences between various eigencuspforms if conjecture \ref{ConjUniv} is known to hold, but they elude the Iwasawa Main Conjecture for one or even both eigencuspforms involved. Further similar examples for various primes or eigencuspforms of various levels and weights are easily found by an inspection of level-raising congruences. All computation were conducted using the Pari/GP software (\cite{PARI2}).
\subsubsection{$p=3$}Let
\begin{equation}\nonumber
\rhobar:G_{\Q}\fleche\GL_{2}(\Fp_{3})
\end{equation}
be the only $G_{\Q}$-representation of Serre weight 2 and level $40$ satisfying 
\begin{equation}\nonumber
\tr(\rhobar(\Fr(\ell)))=\begin{cases}
-1&\textrm{if $\ell=7$,}\\
1&\textrm{if $\ell=11$,}\\
1&\textrm{if $\ell=13$,}\\
-1&\textrm{if $\ell=17$,}\\
1&\textrm{if $\ell=19$,}\\
1&\textrm{if $\ell=23$.}\\
\end{cases}
\end{equation}
Then $\rhobar$ is surjective and $\rhobar|G_{\Q_{3}}$ is irreducible. Hence, $\rhobar$ satisfies assumption \ref{HypMain}.

Put $N=1640=2^{3}\times 5\times 41$. There exists a motivic point of $\Spec\Hsm$ attached to the eigencuspform $f\in S_{2}(\Gamma_{0}(40))$ attached to the elliptic curve
\begin{equation}\nonumber
E_{1}:y^{2}=x^{3}-7x-6.
\end{equation}
Note that $E$ has good supersingular reduction at $p=3$ with $a_{p}(f)=0$ (in particular, the results of \cite{EmertonPollackWeston} do not apply to $f$). There exists exactly a motivic point of $\Spec\Hsm$ attached to an eigencuspform $g\in S_{4}(\Gamma_{0}(1640))$. More precisely, the eigencuspform $g$ has coefficients in an extension $K/\Q$ of degree 15. There is a single prime $\pid\in\Spec\Ocal_{K}$ above $3$ such that $f\equiv g\modulo\pid$. The eigencuspform $g$ has finite, non-zero slope at $\pid$ and $a_{41}(g)$ is equal to 41. Note that $\pi(g)_{41}$ is a Steinberg representation. As the ratio of eigenvalues of $\rhobar(\Fr(\ell))$ at $\ell=41$ is $-1$, $\rhobar$ and $\Hsm$ satisfy condition \eqref{ItemMoinsB} of assumption \ref{HypLocalDef}.

  The relevant Euler factor of $f$ at 41 evaluated at 1 is congruent to 0 modulo 3 whereas the Euler factor of $g$ at 41 evaluated at 1 is congruent to 2 modulo 3. All Euler factors at $\ell\in\Si$ for $f$ and $g$ are $3$-units. Finally, $E_{1}$ has analytic rank 0 and it is easily checked that the special value of its $L$-function is a 3-adic unit (for instance, the Kolyvagin class $\kg(7\times 67)$ is a $3$-adic unit). Hence, $T(f)_{\Iw}$ has trivial second étale cohomology group. Hence, $f$ and $g$ satisfy the hypotheses of corollary \ref{CorEPW}. As a consequence, the universal Iwasawa Main Conjecture is true for $\rhobar$ and the Iwasawa Main Conjecture is true for $T(g)_{\Iw}$. Moreover, the $\Oiwa$-module $H^{2}_{\et}(\Z[1/p],T(g)_{\Iw})$ is non-trivial and has non non-zero pseudonull submodules (a prediction that the Iwasawa Main Conjecture alone does not make).
\subsubsection{$p=5$}
Let us consider
\begin{equation}\nonumber
\rhobar_{E}:G_{\Q}\fleche\GL_{2}(\Fp_{5})
\end{equation}
the residual representation attached to the points of $5$-torsion of the elliptic curve
\begin{equation}\nonumber
E:y^{2}=x^{3}-1825x+306625
\end{equation}
which has conductor $N=2^{2}\times 5^{2}\times 23$. Then $\rhobar_{E}|G_{\qp}$ is absolutely irreducible and $\rhobar_{E}$ is surjective. Hence, $\rhobar_{E}$ satisfies assumption \ref{HypMain}.

The Artin conductor of $\rhobar_{E}$ is prime to $\ell=23$. As $E$ has bad additive reduction at $p=5$ and has no auxiliary prime $\ell'||N$ of bad reduction such that $\rhobar_{E}$ is ramified at $\ell'$, the techniques of \cite{SkinnerUrban,JetchevSkinnerWan,FouquetWan} are not applicable. As will be seen momentarily, the full Iwasawa Main Conjecture for $E$ at $p=5$ cannot be deduced from the results of \cite{KatoEuler} either, as $H^{2}_{\et}(\Z[1/p],T_{p}E\tenseur\La_{\Iw})$ is not trivial.

The only non-trivial Euler factor of $E$ at primes of bad reduction is at $\ell=23$. This Euler factor is equal to 22/23 and hence a $5$-adic unit. Because $\rhobar_{E}|I_{23}$ is trivial, we may level-lower $\rho_{E}$ at $\ell=23$. Indeed, if $E'$ is the elliptic curve
\begin{equation}\nonumber
E':y^{2}=x^{3}-x^{2}-33x+62
\end{equation}
of conductor $N'=2^{2}\times 5^{2}$, then $\rho_{E'}$ is a lift of $\rhobar_{E}$ unramified at $23$. The Euler factor of $E'$ at $\ell=23$ is equal to
\begin{equation}\nonumber
\frac{8115390}{6436343}
\end{equation}
and hence has 5-adic valuation 1. The elliptic curve $E'$ has analytic rank 0 and it is easy to check that the special value $L(E',1)/\Omega_{E}$ is a $5$-adic unit (for instance, the Kolyvagin class $\kg(61\times 311)$ is a $5$-adic unit). Finally, $\ell\not\equiv\pm1\modulo 5$. So $\rhobar$ and $\Hsm$ satisfy assumption \eqref{ItemDiffplusmoins} of \ref{HypLocalDef}.

It follows that $\rho_{E}$ and $\rho_{E'}$ satisfy the hypotheses of corollary \ref{CorEPW}. We conclude that the Iwasawa Main Conjecture is true for $E$ and that the $\La_{\Iw}$-module $H^{2}_{\et}(\Z[1/p],T_{p}E\tenseur\La_{\Iw})$ is not trivial and has no non-zero pseudo-null submodule (a prediction that the Iwasawa Main Conjecture alone does not make). More precisely, since the only non-trivial contribution to $H^{2}_{\et}(\Z[1/\Si],\Ts)$ can come from the Euler factor $\Eul_{\ell}\left(T(\aid)\right)$ on the irreducible component of of $\Hsmr$ containing the point attached to $E'$, $H^{2}_{\et}(\Z[1/\Si],\Ts)$ is cyclic over $\Hsmr$ by Nakayama's lemma. It follows that $H^{2}_{\et}(\Z[1/p],T_{p}E\tenseur\La_{\Iw})$ is $\La_{\Iw}$-cyclic with characteristic ideal generated by a distinguished polynomial of degree 1.

In particular, the leading term formula of \cite{PerrinRiouLpadique} yields that the $5$-adic valuation of 
\begin{equation}\label{EqBSDalg}
\frac{\cardinal{\Sha(E/\Q)[5^{\infty}]}\produit{\ell|N}{}\Tam_{\ell}(E/\Q)}{\cardinal{E(\Q)_{\tors}}^{2}}
\end{equation}
is equal to 1. Since $\cardinal{\Sha(E/\Q)[5^{\infty}]}$ is a square and $\cardinal{E(\Q)_{\tors}}^{2}$ is equal to $1$, this power of 5 comes from the Tamagawa numbers of $E$. Since $E$ is ramified at only one prime where $\rhobar$ is unramified, namely $\ell=23$, we conclude that the Tamagawa number of $E$ at $\ell=23$ has 5-adic valuation 1 (a fact that is of course also easily confirmed experimentally).

The curve $E$ has analytic rank 1. According to \cite{GrossZagier,KolyvaginEuler}, $\Sha(E/\Q)$ is thus a finite group. We have just deduced algebraically that its $5$-torsion part is trivial. Since moreover the Iwasawa Main Conjecture is true for $E$ at 5, verifying the Birch and Swinnerton-Dyer Conjecture is true for $E$ at $5$ amounts by the Gross-Zagier formula to verifying that there exists a Heegner point on $E$ generating a subgroup of index $n$ in the Mordell-Weil group with $n$ of 5-adic valuation 1. Indeed, $E(\Q)$ is a free abelian group of rank 1 generated by the point $P=[-15,575]$ and one can check experimentally that the Heegner point of conductor $-79$ is equal to $-30P$ and so has index $30$ in the Mordell-Weil group.

\subsection{Proof of the main theorem}\label{SubGeneral}
Recall that $R_{\ell}$ the universal framed deformation ring of $\rhobar|G_{\Q_{\ell}}$, that $B$ is the completed tensor product $\widehat{\underset{\ell\in\Si}{\bigotimes}}R_{\ell}$  and that a Taylor-Wiles-Kisin system
\begin{equation}\nonumber
\{B,R^{\square}_{\vide},\Ds^{\square},(R_{n}^{\square},\Delta_{n}^{\square})_{n\in\N}\}
\end{equation}
with coefficients in $B$ was constructed in proposition \ref{PropTWgeneral}. Let $(D_{n},L_{n})_{n\geq 1}$ be the patched system attached to the sytem $\{B,R^{\square}_{\vide},\Ds^{\square},(R_{n}^{\square},\Delta_{n}^{\square})_{n\in\N}\}$. 

The following lemma shows that if the zeta morphism does not identify $\Ds(\Ts)$ with $\Hsm$, then this failure of the universal Iwasawa Main Conjecture propagates by level-raising within the patched system $(D_{n},L_{n})_{n\geq 1}$.
\begin{LemEnglish}\label{LemRemonteBis}
Suppose that in the isomorphism
\begin{equation}\label{EqObjectifTer}
\Ds(\Ts)^{-1}\simeq\Det_{\Hsm}[y\Hsm\fleche x\Hsm]
\end{equation}
induced by $\zs$, $y$ cannot be chosen invertible in $\Hsm$. Then there exist a sequence $a\in\N^{\N}$ and choices of trivializations
\begin{equation}\nonumber
\z^{\square}_{a_{n}}:\left(\Delta_{a_{n}}^{\square}\right)^{-1}\simeq\Det_{R_{a_{n}}^{\square}}[y_{a_{n}}R_{a_{n}}^{\square}\fleche x_{a_{n}}R_{a_{n}}^{\square}]
\end{equation}
such that $x_{a_{n}}/y_{a_{n}}\notin R_{a_{n}}^{\square}$ for all $n\in\N$ and such that the diagram
\begin{equation}\label{DiagCommutativeDamPatched}
\xymatrix{
\left(\Delta_{a_{m}}^{\square}\right)^{-1}\tenseur_{R_{a_{m}}^{\square}}D_{a_{m}}\ar[d]\ar[r]&\Det_{D_{a_{m}}}[y_{a_{m}}D_{a_{m}}\fleche x_{a_{m}}D_{a_{m}}]\ar[d]\\
\left(\Delta_{a_{n}}^{\square}\right)^{-1}\tenseur_{R_{a_{n}}^{\square}}D_{a_{n}}\ar[r]&\Det_{D_{a_{n}}}[y_{a_{n}}D_{a_{n}}\fleche x_{a_{n}}D_{a_{n}}]
}
\end{equation}
is commutative.
\end{LemEnglish}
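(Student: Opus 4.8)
Here the hypothesis that $y$ cannot be chosen invertible is, over the local ring $\Hsmr$, exactly the statement that the invertible fractional ideal $\zs\bigl(\Ds(\Ts)^{-1}\bigr)$ is not contained in $\Hsmr$. The plan is to propagate this non‑integrality up the Taylor--Wiles--Kisin tower of Proposition \ref{PropTWgeneral} and then transport the resulting trivializations to the patched quotients $D_{m}$.

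\emph{Step 1 (zeta morphisms at auxiliary levels).} First I would equip each auxiliary framed ring $R^{\square}_{n}=R^{\square}_{Q_{n}}$ with its own zeta morphism. Since $Q_{n}$ consists of Taylor--Wiles primes, assumption \ref{HypMain} persists for $\rhobar$ relative to $\Sigma\cup Q_{n}$, so Theorem \ref{TheoNakamura} supplies a zeta morphism for the universal deformation over $R_{Q_{n}}=\mathbf{T}^{\Sigma\cup Q_{n}}_{\mgot_{\rhobar}}$, and the argument of Proposition \ref{PropTorsion}, applied verbatim at level $\Sigma\cup Q_{n}$, shows it induces an isomorphism over $Q(R_{Q_{n}})$. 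As $R^{\square}_{Q_{n}}$ is a power series ring over $R_{Q_{n}}$ (by assumption \ref{HypMain}(1), $\ad^{0}\rhobar$ has no $G_{\Q,\Sigma}$-invariants) and the universal fundamental line commutes with arbitrary base change, this descends to a morphism $\z^{\square}_{n}$ and an identification $(\Delta^{\square}_{n})^{-1}\simeq\Det_{R^{\square}_{n}}[y_{n}R^{\square}_{n}\fleche x_{n}R^{\square}_{n}]$ with $x_{n},y_{n}$ nonzerodivisors.

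\emph{Step 2 (the failure climbs the tower).} The core of the proof is to show $x_{n}/y_{n}\notin R^{\square}_{n}$ for every $n$. By axiom \ref{ItSurjectionRn} of Definition \ref{DefTaylorWilesKisin} and Proposition \ref{PropTWgeneral}(4) there is a surjection $\pi_{n}\colon R^{\square}_{n}\surjection R^{\square}_{\vide}=\Hsmr[[Y_{1},Y_{2},Y_{3}]]$ along which the universal deformation becomes unramified at each $q\in Q_{n}$. The excision triangle relating $\RGamma_{\et}(\Z[1/(\Sigma\cup Q_{n})],-)$ and $\RGamma_{\et}(\Z[1/\Sigma],-)$ identifies $(\Delta^{\square}_{n})^{-1}\tenseur_{R^{\square}_{n}}R^{\square}_{\vide}$ with $\Ds(\Ts)^{-1}\tenseur_{\Hsmr}R^{\square}_{\vide}$ twisted by $\bigotimes_{q\in Q_{n}}\Det\RGamma(G_{\Q_{q}},-)$, and Kato's compatibility of zeta elements under change of level (\cite{KatoEuler}), which by Zariski‑density of motivic points pins down the morphism of Theorem \ref{TheoNakamura}, shows $\z^{\square}_{n}$ base‑changes along $\pi_{n}$ to $\zs\tenseur R^{\square}_{\vide}$ twisted by the same local factors. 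For a Taylor--Wiles prime $q\equiv 1\pmod{p^{n}}$ the Hochschild--Serre computation of $\RGamma(G_{\Q_{q}},-)$ for an unramified module expresses this local factor as the ratio $\det(1-\Fr(q))/\det(1-q^{-1}\Fr(q))$, whose numerator and denominator are congruent modulo $\mgot_{\rhobar}$ because $q\equiv 1\pmod p$; strengthening the condition defining $X_{n}$ to require that $\rhobar(\Fr(q))$ have eigenvalues which are distinct \emph{and} different from $1$ (permissible by Chebotarev, since $\SL_{2}(\fp)$ contains such elements, e.g. of order $4$) makes both a nonzerodivisor, hence the local factor a \emph{unit} of $R^{\square}_{\vide}$. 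Thus $\z^{\square}_{n}\tenseur R^{\square}_{\vide}$ and $\zs\tenseur R^{\square}_{\vide}$ have the same image in $Q(R^{\square}_{\vide})$ up to a unit; as $\Hsmr\plonge R^{\square}_{\vide}$ is faithfully flat this common image is not contained in $R^{\square}_{\vide}$, and since a surjection of rings carries an integral ideal to an integral ideal, $\z^{\square}_{n}$ cannot identify $(\Delta^{\square}_{n})^{-1}$ with an ideal of $R^{\square}_{n}$, i.e. $x_{n}/y_{n}\notin R^{\square}_{n}$. (Along the way one checks, using Propositions \ref{PropDetComSigma} and \ref{PropTorsion} at level $\Sigma\cup Q_{n}$, that all the maps involved are Iwasawa‑suitable so that these base changes are legitimate.)

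\emph{Step 3 (descent to the patched system).} Finally I would run the patching construction of Definition \ref{DefTaylorWilesKisin}: extract a subsequence $a=(Q(n))_{n\in\N}$ along which the patching data $(D_{m,Q(n)},L_{m,Q(n)})$ stabilize to $(D_{m},L_{m})$, and take $(x_{a_{n}},y_{a_{n}})$ to be the pair of Step 1 for $R^{\square}_{a_{n}}$. Base‑changing the identification of Step 1 along $R^{\square}_{a_{n}}\fleche D_{a_{n}}$ and using that the zeta morphism of Theorem \ref{TheoNakamura} and the determinant functor commute with base change — hence with the transition maps of the patched system and with the chosen patching‑datum isomorphisms — yields the commutativity of \eqref{DiagCommutativeDamPatched}, while $x_{a_{n}}/y_{a_{n}}\notin R^{\square}_{a_{n}}$ was established in Step 2. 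The main obstacle will be Step 2: matching Nakamura's zeta morphisms at the levels $\Sigma$ and $\Sigma\cup Q_{n}$ under $\pi_{n}$ up to a unit requires Kato's change‑of‑level formula together with the local determinant computation at Taylor--Wiles primes (and the harmless sharpening of the Chebotarev condition defining $X_{n}$); the remainder is a formal consequence of the base‑change compatibility of fundamental lines and zeta morphisms.
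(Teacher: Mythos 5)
Your proof takes a genuinely different route from the paper's. The paper never constructs zeta morphisms at the auxiliary levels $\Sigma\cup Q_{n}$ and never computes Euler factors at Taylor--Wiles primes. Its argument is commutative-algebraic: it forms the renormalized trivializations $\phi_{\Sigma^{*}}=\tfrac{y^{*}}{x^{*}}\z_{\Sigma^{*}}$, observes that every arrow in the diagram \eqref{EqDiagCompXY} carries a generator of a free rank-one module to a generator, and deduces by contraposition that non-integrality of $x/y$ over $\Hsmr$ forces non-integrality over $\Hecke^{\Sigma''}_{\mr}$. The only non-formal input in descending to the patched rings $D_{m}$ is that $x$ and $y$, being nonzero in the Noetherian local ring $\Hsmr$, lie outside $\mgot^{N}$ for $N$ large, hence outside $\cid_{m}R_{Q(m)}+\mgot_{R_{Q(m)}}^{(r_{m})}$ for $m$ large. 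No unit condition on any Euler factor is needed.

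Your Step 2 replaces this with an explicit construction of $\z^{\square}_{n}$ via Theorem \ref{TheoNakamura} at each level $\Sigma\cup Q_{n}$ together with an Euler-factor comparison along $\pi_{n}\colon R^{\square}_{n}\surjection R^{\square}_{\vide}$. This is more concrete, but it imposes a genuine extra requirement: the local factor at each $q\in Q_{n}$ must be a \emph{unit} of $R^{\square}_{\vide}$, which holds only if $\rhobar(\Fr(q))$ has no eigenvalue equal to $1$. You therefore strengthen the Chebotarev condition defining $X_{n}$. This is a real gap as written, because Proposition \ref{PropTWgeneral} and the whole patching apparatus it feeds are stated and proved for the paper's unmodified $X_{n}$ (only $q\equiv 1\pmod{p^{n}}$ and distinct eigenvalues of $\rhobar(\Fr(q))$); after changing $X_{n}$ you must re-establish that proposition. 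The modification is standard in the Taylor--Wiles literature and very likely goes through, but you should state the sharpened $X_{n}$ from the outset and re-verify the conclusions of Proposition \ref{PropTWgeneral} rather than invoke it as given. You should also check that $\pi_{n}(y_{n})$ remains a nonzerodivisor in $R^{\square}_{\vide}$ so that $\pi_{n}(x_{n})/\pi_{n}(y_{n})$ is meaningful, a point your argument currently elides. The upshot is that your route is more transparent about \emph{why} non-integrality climbs the tower, at the price of coupling the lemma to the specifics of the auxiliary-prime selection; the paper's formal argument deliberately avoids that coupling and therefore needs no modification of $X_{n}$ at all.
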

\begin{proof}
Because $R_{\vide}^{\square}$ is the power-series ring $\Hsm[[Y_{1},Y_{2},Y_{3}]]$, if in the isomorphism
\begin{equation}\label{EqObjectifSquare}
\zs:\Ds(\Ts)^{-1}\simeq\Det_{\Hs}[y\Hsm\fleche x\Hsm]
\end{equation}
induced by $\zs$, $y$ cannot be chosen invertible in $\Hsm$, then in the trivialization 
\begin{equation}\nonumber
\zs^{\square}:\left(\Delta^{\square}_{\Si}(\Ts)\right)^{-1}\simeq\Det_{R_{\vide}^{\square}}[y R_{\vide}^{\square}\fleche x R_{\vide}^{\square}]
\end{equation}
induced from trivialization \eqref{EqObjectifSquare} by tensor product over $R_{\vide}=\Hsm$ with $R_{\vide}^{\square}$, it is not possible to choose $y$ a unit in $R_{\vide}^{\square}$.

Suppose more generally that $\Sigma$ and $\Sigma'$ are admissible set of primes and let $\Sigma''$ be their union. Let $J$ and $J'$ be two ideals of $\Hsm$ and $\Hsmprime$ respectively such that $\Hsm/J$ and $\Hsmprime/J'$ are isomorphic to a given ring $R$. Fix a choice of $(x,y)\in\Hs$ and $(x',y')\in\Hsmprime$ such that $\zs$ and $\zsprime$ respectively induce isomorphisms
\begin{equation}\nonumber
\zs:\Ds(\Ts)^{-1}\simeq\Det_{\Hsm}[y\Hsm\fleche x\Hsm]
\end{equation}
and
\begin{equation}\nonumber
\zsprime:\Dsprime(\Tsprime)^{-1}\simeq\Det_{\Hsmprime}[y'\Hsmprime\fleche x'\Hsmprime].
\end{equation}
Assume moreover that $x,y$ do not belong to $J$ and that $x',y'$ do not belong to $J'$. For $*$ the symbol $\vide,'$ or $''$, define
\begin{equation}\nonumber
\phi_{\Sigma^{*}}=\frac{y^{*}}{x^{*}}\z_{\Sigma^{*}}.
\end{equation}
Equivalently, this corresponds to the trivialization of
\begin{equation}\nonumber
[y^{*}\Hecke_{\mgot_{\rhobar}}^{\Sigma^{*}}\fleche x^{*}\Hecke_{\mgot_{\rhobar}}^{\Sigma^{*}}]
\end{equation}
obtained by identifying the two terms of the complex. Consider the diagram 
\begin{equation}\label{EqDiagCompXY}
\xymatrix{
&\Dsprime\ar[r]^{\phi_{\Sigma'}}&\Hsmprime\ar[dr]^{\mod J'}\\
\Delta_{\Si''}\ar[ur]^{\pi_{\Sigma'',\Sigma'}}\ar[dr]_{\pi_{\Sigma'',\Sigma}}\ar[r]^{\phi_{\Sigma''}}&\Hecke^{\Sigma''}_{\mr}\ar[rd]\ar[ru]&&R\\
&\Ds\ar[r]^{\phi_{\Sigma}}&\Hsm\ar[ru]_{\mod J}
}
\end{equation}
in which rings are viewed as free modules of rank 1 over themselves. Each arrow then sends a generator of its source to a generator of its target. Consequently, if there is an isomorphism
\begin{equation}\label{EqIsoPrimePrime}
\Delta_{{\Sigma''}}(T_{\Sigma''})^{-1}\simeq\Det_{\Hsmprimeprime}[y''\Hsmprimeprime\fleche x''\Hsmprimeprime]
\end{equation}
in which $y''$ is invertible in $\Hecke^{\Si''}_{\mr}$, then there exists an isomorphism
\begin{equation}\nonumber
\Ds(\Ts)^{-1}\simeq\Det_{\Hsm}[y\Hsm\fleche x\Hsm]
\end{equation}
with $y$ invertible in $\Hsm$. By contraposition, if there is no isomorphism 
\begin{equation}\label{EqIsoEtoile}
\Ds(\Ts)^{-1}\simeq\Det_{\Hsm}[y\Hsm\fleche x\Hsm]
\end{equation}
with $y$ invertible in $\Hsm$, then there is no isomorphism \eqref{EqIsoPrimePrime} with $y''$ invertible.

We now return to the claims of the lemma. The commutativity of diagram \eqref{DiagCommutativeDamPatched} can always be arranged by construction of the set $\{D_{m}\}_{m\in\N}$ of patched rings. If, as in the lemma, we assume in addition that in
\begin{equation}\label{EqObjectifBis}
\zs:\Ds(\Ts)^{-1}\simeq\Det_{\Hs}[y\Hs\fleche x\Hs],
\end{equation}
the element $y$ cannot be chosen invertible in $\Hsm$, then for $N\in\N$ large enough, $x$ and $y$ do not belong to $\mgot_{\Hs}^{N}$. Hence, for $m$ large enough, $x$ and $y$ do not belong to the ideal $\cid_{m}R_{Q(m)}+\mgot_{R_{Q(m)}}^{(r_{m})}$. If two integers $n_{1},n_{2}$ are large enough, the reasoning of the first part of the proof thus applies to the rings $R^{\square}_{n_{1}}$ and $R^{\square}_{n_{2}}$ and to the ideals $\left(\cid_{m_{i}}R_{n_{i}}+\mgot_{R_{n_{i}}}^{(r_{m_{i}})}\right)$ for $i\in\{1,2\}$ and $m_{i}$ as in the definition of the patched system. This yields a sequence $(a_{n})_{n\in\N}$ such that the $\Delta^{\square}_{a_{n}}$ verify the desired properties.
%
%
\end{proof}
Replacing the Taylor-Wiles-Kisin system $\{B,R^{\square}_{\vide},\Ds^{\square},(R_{n}^{\square},\Delta_{n}^{\square})_{n\in\N}\}$ with the Taylor-Wiles-Kisin system $\{B,R^{\square}_{\vide},\Ds^{\square},(R_{a_{n}}^{\square},\Delta_{a_{n}}^{\square})_{n\in\N}\}$ if necessary, we obtain the existence of an isomorphism
\begin{equation}\nonumber
\left(\Delta_{\infty}^{\square}\right)^{-1}\simeq\Det_{R^{\square}_{\infty}}[y_{\infty}R^{\square}_{\infty}\fleche x_{\infty}R^{\square}_{\infty}]
\end{equation}
with $y_{\infty}$ in the maximal ideal of $R^{\square}_{\infty}$ and $x_{\infty}/y_{\infty}\notin R^{\square}_{\infty}$. 
\begin{LemEnglish}\label{LemDescendBis}
If there is an isomorphism
\begin{equation}\label{EqPointDeDepart}
\left(\Delta_{\infty}^{\square}\right)^{-1}\simeq\Det_{R^{\square}_{\infty}}[y_{\infty}R^{\square}_{\infty}\fleche x_{\infty}R^{\square}_{\infty}]
\end{equation}
with $y_{\infty}$ in the maximal ideal of $R^{\square}_{\infty}$ and $x_{\infty}/y_{\infty}\notin R^{\square}_{\infty}$, then there exists an admissible set of primes $\Sigma'\subset\Si$, a discrete valuation ring finite and flat over $\zp$ and a specialization
$$\psi:\Hsmprime\fleche S_{\Iw}$$
satisfying the following properties.
\begin{enumerate}
\item\label{ItLemDescend1} The specialization factors through a single irreducible component of $\Hsmprime$.
\item\label{ItLemDescend2} The class $\zs(\psi)$ is not zero.
\item\label{ItLemDescend3} The group $H^{2}(G_{\qp},\Tpsi)$ is finite.
\item\label{ItLemDescend4} The morphism $\zs(\psi)$ does not induce an inclusion $\Ds(\Tpsi)^{-1}\plonge S_{\Iw}$.
\end{enumerate}
\end{LemEnglish}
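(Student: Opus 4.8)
The plan is to descend the failure of the universal Iwasawa Main Conjecture from the patched limit ring $R^{\square}_{\infty}$ back to a point on a regular ring produced by the Taylor-Wiles-Kisin machinery, and then to an actual arithmetic specialization. First I would invoke Lemma \ref{LemTW}: since $R^{\square}_{\infty}$ agrees, after quotienting by a minimal prime $\aid_{\infty}$, with $\Bbf/\aid = (B/\aid)[[x_{1},\dots,x_{h+j-d}]]$, and since by Proposition \ref{PropTWgeneral}(3) every irreducible component of $B$ is regular, the ring $R^{\square}_{\infty}/\aid_{\infty}$ is a regular local domain. The hypothesis \eqref{EqPointDeDepart} that $x_{\infty}/y_{\infty}\notin R^{\square}_{\infty}$ with $y_{\infty}$ in the maximal ideal means that, on passing to at least one such minimal prime $\aid_{\infty}$, the induced trivialization $(\Delta_{\infty}^{\square})^{-1}\tenseur R^{\square}_{\infty}/\aid_{\infty}\simeq\Det[\bar y_{\infty}\to\bar x_{\infty}]$ still has $\bar y_{\infty}$ a non-unit and $\bar x_{\infty}/\bar y_{\infty}\notin R^{\square}_{\infty}/\aid_{\infty}$; here one uses that the image of $\Ds(\Ts)^{-1}$ is a \emph{fractional} ideal and that a fractional ideal contained in a regular (hence normal, indeed factorial) ring fails to be the whole ring if and only if it fails to be so after localizing at some height-one prime, and that this property is detected on some irreducible component. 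This is the step where the regularity provided by Proposition \ref{PropTWgeneral} is essential: on a factorial domain a non-unit denominator cannot be cancelled.

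Next I would transport this statement back down the Taylor-Wiles-Kisin tower to the global framed deformation ring and thence to $\Hsmr$. By the construction of the patched system, $R^{\square}_{\infty}/\aid_{\infty}$ carries a surjection onto (a quotient of) $R_{\vide}^{\square}=\Hsmr[[Y_{1},Y_{2},Y_{3}]]$, compatibly with the zeta morphisms by Proposition \ref{PropDetComSigma} (applied with $\la$ the projection $R^{\square}_{\infty}\to R^{\square}_{\infty}/\aid_{\infty}$ and $\psi$ the structure map to the appropriate ring) and Theorem \ref{TheoNakamura}. Since $R_{\vide}^{\square}$ is a power-series ring over $\Hsmr$, faithful flatness lets me descend a non-unit denominator from $R_{\vide}^{\square}$ to $\Hsmr$ itself, which reproduces the hypothesis of Lemma \ref{LemRemonteBis} but now on a regular ring; more precisely, composing with the structure maps I find a minimal prime $\aid$ of $\Hsmprime$ for an admissible $\Sigma'\subset\Sigma$ (coming from the primes $\ell\in\Sigma$ at which the chosen component of $B$ sits, together with the Taylor-Wiles primes which are then removed) through which the relevant trivialization still has a non-unit denominator after reduction mod $\aid$. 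Set $\psi_0:\Hsmprime\to\Hsmprime/\aid$; property \ref{ItLemDescend1} holds by construction.

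Then I would promote $\psi_0$ to a specialization $\psi:\Hsmprime\to S_{\Iw}$ with values in the Iwasawa algebra of a discrete valuation ring $S$ finite flat over $\zp$. By Hida-type density of motivic points (and the density statements in \S\ref{SubLevels}), the irreducible component $\Spec\Hsmprime/\aid$ contains a Zariski-dense set of motivic points; choosing such a point $\la_0$ not lying in the locus where the denominator becomes invertible, and then Iwasawa-deforming it, one obtains $\psi$ factoring through $\aid$ (property \ref{ItLemDescend1}) with $\zs(\psi)\neq0$ (property \ref{ItLemDescend2}) by Proposition \ref{PropTorsion} together with Theorem \ref{TheoZetaMorphism} — the zeta morphism is nonzero at any motivic point by \cite[Theorem 12.4]{KatoEuler} and this survives the Iwasawa deformation. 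Property \ref{ItLemDescend3}, the finiteness of $H^{2}(G_{\qp},T_{\psi})$, is arranged by choosing $\la_0$ so that the $p$-adic representation $\rho_{\la_0}$ is de Rham with distinct Hodge-Tate weights and the dual exponential is injective, which by local Tate duality forces $H^{2}(G_{\qp},T_{\psi})$ to be torsion hence finite over the Iwasawa algebra; this is exactly the strict criticality condition from \S2.1, generically satisfied by \cite{Rohrlich}. Finally, property \ref{ItLemDescend4} — that $\zs(\psi)$ does not yield an inclusion $\Ds(T_{\psi})^{-1}\plonge S_{\Iw}$ — is precisely the non-unit-denominator statement descended through the commutative diagrams of Proposition \ref{PropDetComSigma}, provided $\la_0$ avoids the proper closed subset where the denominator would be killed. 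The main obstacle is the very last point: I must guarantee that the specialization $\psi$ can be chosen to retain the pathology, i.e.\ that the bad denominator is not an artefact that disappears at every arithmetically accessible point; this requires a careful bookkeeping of which Euler-factor and which component contributions force the denominator, using that the locus of \emph{good} specializations is a proper Zariski-closed subset of the relevant component (since the zeta morphism is an isomorphism generically only if it is an isomorphism, by Theorem \ref{TheoCompatibility}), so its complement still meets the dense set of motivic points satisfying strict criticality.
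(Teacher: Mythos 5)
The key step of the paper's proof is entirely missing from your proposal, and the step you substitute in its place does not work.

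Your proposal attempts to transport the bad fractional ideal from $R^{\square}_{\infty}/\aid_{\infty}$ directly down to $\Hsmprime$ (or to $R^{\square}_{\vide}=\Hsmr[[Y_1,Y_2,Y_3]]$ and then by faithful flatness to $\Hsmr$), and then to select a motivic point of the relevant irreducible component at which the pathology survives. This fails for two reasons. First, the surjection $R^{\square}_{\infty}\surjection R^{\square}_{\vide}$ coming from patching is not compatible with the choice of minimal prime $\aid_{\infty}$: there is no natural map $R^{\square}_{\infty}/\aid_{\infty}\fleche\Hsmprime/\aid$ through which the pathology is guaranteed to persist, because the passage back to $R^{\square}_{\vide}$ involves quotienting by the ideal $(y_1,\ldots,y_h)$ and the denominator might become a unit after this quotient (that this might happen is precisely the obstruction the whole Taylor-Wiles-Kisin machinery is designed to circumvent). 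Second, $\Hsmprime/\aid$ need not be regular, so even if one had such a map, there is no principle ensuring that the set of motivic points where the denominator persists is non-empty; your assertion that "the locus of good specializations is a proper Zariski-closed subset \ldots by Theorem~\ref{TheoCompatibility}" is circular, since that theorem takes the universal IMC as a hypothesis.

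The paper's actual argument is quite different. It uses the regularity of $R^{\square}_{\infty}/\aid_{\infty}$ (which has dimension $\geq 10$) to construct a map $\psi_0$ onto a \emph{principal artinian quotient} $S_0$ of a DVR, chosen so that the $S_0$-valuation of $\psi_0(x_{\infty})$ is strictly smaller than that of $\psi_0(y_{\infty})$, and so that $\psi_0$ factors through some finite-level ring $R_n^{\square}$. It then exploits the fact that $R_n$ is a complete intersection over $\Ocal$ to lift the induced map $R_n\fleche S_0$ to a morphism $\psi_1:R_n\fleche S_1[[X]]$ over the artinian quotient, where $X$ is the cyclotomic variable; normalizing $S_1$ gives the desired $\psi:R_n\fleche S_{\Iw}$. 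The crucial point is the valuation comparison: if $\psi(y_n)$ divided $\psi(x_n)$ in $S_{\Iw}$ then, pushing down to $S_0$, $\psi_0(y_{\infty})$ would divide $\psi_0(x_{\infty})$, contradicting the choice of $\psi_0$. This artinian intermediate step and the valuation argument are what makes the "approximate descent" work, and they have no counterpart in your sketch. The final genericity paragraph (arranging conditions (1), (2), (3) by a codimension-two argument, since condition (4) is open) is closer to what you suggested, but it is applied to the explicitly constructed $\psi$, not to an unspecified motivic point chosen by density.
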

\begin{proof}
Because $R^{\square}_{\infty}$ is reduced, if there exists a trivialization \ref{EqPointDeDepart} with $x_{\infty}/y_{\infty}\notin R^{\square}_{\infty}$, there exist a minimal prime ideal $\aid_{\infty}\in\Spec R_{\infty}^{\square}$ and a trivialization
\begin{equation}\label{EqEtape}
\left(\Delta_{\infty}^{\square}\tenseur_{R_{\infty}^{\square}} R_{\infty}^{\square}/\aid_{\infty}\right)^{-1}\simeq\Det_{R_{\infty}^{\square}/\aid_{\infty}}[\tilde{y}_{\infty}R_{\infty}^{\square}/\aid_{\infty}\fleche \tilde{x}_{\infty}R_{\infty}^{\square}/\aid_{\infty}]
\end{equation}
such that the images $\tilde{x}_{\infty}$ and $\tilde{y}_{\infty}$ (which are both non-zero) do not satisfy that $\tilde{x}_{\infty}/\tilde{y}_{\infty}$ belongs to $R_{\infty}^{\square}/\aid_{\infty}$. Because every irreducible component of $R_{\infty}$ is a regular local ring by proposition \ref{PropTWgeneral}, $R_{\infty}^{\square}/\aid_{\infty}$ is a regular local ring by lemma \ref{LemTW}.
 The ring $R^{\square}_{\infty}/\aid_{\infty}$ is a regular local ring of Krull dimension at least 10 equal to an irreducible component of an inverse limit of a projective system of quotients $R^{\square}_{n}$ with surjective transition maps so it admits a local morphism $\psi_{0}:R^{\square}_{\infty}\fleche S_{0}$ satisfying the following properties.
\begin{enumerate}
\item The ring $S_{0}$ is a principal artinian quotient of a $\zp$-flat discrete valuation ring of residual characteristic $p$.
\item Neither $\psi_{0}(y_{\infty})$ nor $\psi_{0}(x_{\infty})$ are zero and 
\begin{equation}\nonumber
\max\{n\in\N|\psi_{0}(x_{\infty})\in\mgot^{n}_{S_{0}}\}<\max\{n\in\N|\psi_{0}(y_{\infty})\in\mgot^{n}_{S_{0}}\}.
\end{equation}
\item There exists a surjection $R^{\square}_{n}\surjection S_{0}$ for some $n\in\N$.
\suspend{enumerate}
Forgetting the choice of basis induces a surjection $R_{n}\surjection S_{0}$.

Denote by $X$ the formal variable corresponding to cyclotomic deformations of $\rhobar$. Because $R_{n}$ is the suitable universal deformation of $\rhobar$ attached to the supplementary deformation data encoded in the integer $n$ and is a complete intersection ring, there exist formal variables $Y_{1},\cdots,Y_{m}$ and a regular sequence $(f_{1},\cdots,f_{m-2})$ in $\Ocal[[Y_{1},\cdots,Y_{m}]]$ such that
\begin{equation}\nonumber
R_{n}\simeq\Ocal[[X,Y_{1},\cdots,Y_{m}]]/(f_{1},\cdots,f_{m-2}).
\end{equation}
By the smoothness of $\Ocal[[Y_{1},\cdots,Y_{m}]]$, there exists a complete intersection ring $S_{1}$ of relative dimension zero over $\Ocal$ and a morphism $\psi_{1}:R_{n}\fleche S_{1}[[X]]$ such that the diagram
\begin{equation}\nonumber
\xymatrix{
&S_{1}[[X]]\ar[d]^{\pi}\\
R_{n}\ar[r]^{\psi_{0}}\ar[ru]^{\psi_{1}}&S_{0}
}
\end{equation}
is commutative. Denote by $S$ the normalization of $S_{1}$ and denote by $\psi$ the natural morphism $\psi:R_{n}\fleche S_{\Iw}$. Suppose that the image of $y_{n}$ in $S_{\Iw}$ divides the image of $x_{n}$ in $S_{\Iw}$. Then $\psi_{0}(y_{\infty})=\pi\circ\psi(y_{n})$ divides $\psi_{0}(x_{\infty})=\pi\circ\psi(x_{n})$; in contradiction with the definition of $S_{0}$. In particular, for any choice of $(x_{\psi},y_{\psi})\in S_{\Iw}^{2}$ such that there is an isomorphism \begin{equation}\nonumber
\zs(\psi)\Ds(\Tpsi)^{-1}\simeq\Det_{S_{\Iw}}[y_{\psi}S\fleche x_{\psi}S],
\end{equation}
the map $\zs(\psi)$ does not induce an inclusion $\Delta_{S_{\Iw}}(\Tpsi)^{-1}\plonge S_{\Iw}$.

It remains to show that there exists such a $\psi$ satisfying in addition the requirements \ref{ItLemDescend1}, \ref{ItLemDescend2} and \ref{ItLemDescend3} of the lemma. The union of $S_{\Iw}$-valued points $\psi$ of $\Spec\Hsmprime$ such that either $\zs(\psi)$ is a divisor of zero, or $H^{2}(G_{\qp},\Tpsi)$ is infinite or $\psi$ factors through several distinct irreducible components is contained in a subscheme of codimension greater than 1 whereas the requirement \ref{ItLemDescend4} is open. Hence, there exists $\psi$ as in the statement of the lemma.
\end{proof}
\begin{proof}[Proof of theorem 4.1.1]
Assume by way of contradiction that $\zs$ does not induce an inclusion $\Ds(\Ts)^{-1}\plonge\Hsm$. According to lemma \ref{LemRemonteBis} and \ref{LemDescendBis}, there then exists an admissible set $\Sigma'$, a discrete valuation ring $S$ finite and flat over $\zp$ and a specialization $\psi:\Hsmprime\fleche S_{\Iw}$ such that $\zs({\psi})$ is not zero, the group $H^{2}(G_{\qp},\Tpsi)$ is finite and $\zs(\psi)$ does not induce an inclusion $\Delta_{S_{\Iw}}(\Tpsi)^{-1}\plonge S_{\Iw}$. 

By assumption, the image of $\rhobar$ acts irreducibly on $\Fpbar_{p}^{2}$ and is of order divisible by $p$ so contains a subgroup conjugate to $\SL_{2}(\Fp_{q})$ for some $q=p^{n}$ and hence an element $\bar{\s}\neq\Id$ which is unipotent. Let $\s$ be a lift of $\bar{\s}$ to $\rho_{\psi}(G_{\Q})$. Then the kernel of $\s-1$ is strictly included in $\Tpsi$ and its cokernel is dimension 1 after tensor product with $S/\mgot_{S}$. Hence, the cokernel of $\s-1$ is not torsion and is generated by a single element, so it is free of rank 1. Denote by $X$ the cyclotomic variable, so that $S_{\Iw}=S[[X]]$. The representation $\Tpsi/X\Tpsi$ satisfies hypotheses $(\operatorname{i_{{str}}})$, $\operatorname{(ii_{{str}}})$ and $(\operatorname{iv}_{\pid})$ of \cite[Theorem 0.8]{KatoEulerOriginal}. As $\Tpsi/X\Tpsi$ is absolutely irreducible and not abelian, the conclusion of \cite[Proposition 8.7]{KatoEulerOriginal} also holds. As establishing this proposition is the sole function of hypothesis $(\operatorname{iii})$ of \cite[Theorem 0.8]{KatoEulerOriginal}, the conclusion of this theorem holds for $\Tpsi/X\Tpsi$. As $H^{2}(G_{\qp},\Tpsi)$ is finite, its localization any $\pid\in\Spec S_{\Iw}$ of height 1 vanishes and so there is a canonical isomorphism between its determinant and $S_{\Iw}$. Consequently, the isomorphism
\begin{equation}\nonumber
\zs(\psi):\Ds(T_{\psi^{}})\tenseur\Frac(S_{\Iw})\isom\Frac(S_{\Iw})
\end{equation}
induces and inclusion 
\begin{equation}\nonumber
\zs(\psi^{}):\Ds(T_{\psi^{}})^{-1}\plonge S_{\Iw},
\end{equation}
in contradiction with the defining properties of $\psi$.

Hence, there exists $x\in\Hsmr$ such that the zeta morphism 
\begin{equation}\nonumber
\zs:\Ds(\Ts)\tenseur Q(\Hsmr)\isom Q(\Hsmr)
\end{equation}
induces an isomorphism
\begin{equation}\nonumber
\zs:\Ds(\Ts)^{-1}\isom x\Hsmr.
\end{equation} 
All the equivalences of the theorem then follows easily from the commutativity of the diagram
\begin{equation}\nonumber
\xymatrix{
\ar[d]_{-\tenseur_{\Hsmr,\la}\Oiwa}\Ds(\Ts)^{-1}\ar[r]^(0.6){\zs}&x\Hsmr\ar[d]^{\la}\\
\Delta_{\Oiwa}(f)\ar[r]_{\z(f)_{\Iw}}&\la(x)\Oiwa
}
\end{equation}
for all motivic point $\la:\Hsmr\fleche\Oiwa$.
\end{proof}
\paragraph{Acknowledgments}The author acknowledges financial support from ANER project MethodParithé and ANR project PadLEfAn. It is a pleasure to thank Chris Skinner and Xin Wan for useful remarks on this manuscript, as well as the anonymous referee(s) for their careful reading.

\def\Dbar{\leavevmode\lower.6ex\hbox to 0pt{\hskip-.23ex \accent"16\hss}D}
  \def\cfac#1{\ifmmode\setbox7\hbox{$\accent"5E#1$}\else
  \setbox7\hbox{\accent"5E#1}\penalty 10000\relax\fi\raise 1\ht7
  \hbox{\lower1.15ex\hbox to 1\wd7{\hss\accent"13\hss}}\penalty 10000
  \hskip-1\wd7\penalty 10000\box7}
  \def\cftil#1{\ifmmode\setbox7\hbox{$\accent"5E#1$}\else
  \setbox7\hbox{\accent"5E#1}\penalty 10000\relax\fi\raise 1\ht7
  \hbox{\lower1.15ex\hbox to 1\wd7{\hss\accent"7E\hss}}\penalty 10000
  \hskip-1\wd7\penalty 10000\box7} \def\Dbar{\leavevmode\lower.6ex\hbox to
  0pt{\hskip-.23ex \accent"16\hss}D}
  \def\cfac#1{\ifmmode\setbox7\hbox{$\accent"5E#1$}\else
  \setbox7\hbox{\accent"5E#1}\penalty 10000\relax\fi\raise 1\ht7
  \hbox{\lower1.15ex\hbox to 1\wd7{\hss\accent"13\hss}}\penalty 10000
  \hskip-1\wd7\penalty 10000\box7}
  \def\cftil#1{\ifmmode\setbox7\hbox{$\accent"5E#1$}\else
  \setbox7\hbox{\accent"5E#1}\penalty 10000\relax\fi\raise 1\ht7
  \hbox{\lower1.15ex\hbox to 1\wd7{\hss\accent"7E\hss}}\penalty 10000
  \hskip-1\wd7\penalty 10000\box7} \def\Dbar{\leavevmode\lower.6ex\hbox to
  0pt{\hskip-.23ex \accent"16\hss}D}
  \def\cfac#1{\ifmmode\setbox7\hbox{$\accent"5E#1$}\else
  \setbox7\hbox{\accent"5E#1}\penalty 10000\relax\fi\raise 1\ht7
  \hbox{\lower1.15ex\hbox to 1\wd7{\hss\accent"13\hss}}\penalty 10000
  \hskip-1\wd7\penalty 10000\box7}
  \def\cftil#1{\ifmmode\setbox7\hbox{$\accent"5E#1$}\else
  \setbox7\hbox{\accent"5E#1}\penalty 10000\relax\fi\raise 1\ht7
  \hbox{\lower1.15ex\hbox to 1\wd7{\hss\accent"7E\hss}}\penalty 10000
  \hskip-1\wd7\penalty 10000\box7} \def\Dbar{\leavevmode\lower.6ex\hbox to
  0pt{\hskip-.23ex \accent"16\hss}D}
  \def\cfac#1{\ifmmode\setbox7\hbox{$\accent"5E#1$}\else
  \setbox7\hbox{\accent"5E#1}\penalty 10000\relax\fi\raise 1\ht7
  \hbox{\lower1.15ex\hbox to 1\wd7{\hss\accent"13\hss}}\penalty 10000
  \hskip-1\wd7\penalty 10000\box7}
  \def\cftil#1{\ifmmode\setbox7\hbox{$\accent"5E#1$}\else
  \setbox7\hbox{\accent"5E#1}\penalty 10000\relax\fi\raise 1\ht7
  \hbox{\lower1.15ex\hbox to 1\wd7{\hss\accent"7E\hss}}\penalty 10000
  \hskip-1\wd7\penalty 10000\box7} \def\Dbar{\leavevmode\lower.6ex\hbox to
  0pt{\hskip-.23ex \accent"16\hss}D}
  \def\cfac#1{\ifmmode\setbox7\hbox{$\accent"5E#1$}\else
  \setbox7\hbox{\accent"5E#1}\penalty 10000\relax\fi\raise 1\ht7
  \hbox{\lower1.15ex\hbox to 1\wd7{\hss\accent"13\hss}}\penalty 10000
  \hskip-1\wd7\penalty 10000\box7}
  \def\cftil#1{\ifmmode\setbox7\hbox{$\accent"5E#1$}\else
  \setbox7\hbox{\accent"5E#1}\penalty 10000\relax\fi\raise 1\ht7
  \hbox{\lower1.15ex\hbox to 1\wd7{\hss\accent"7E\hss}}\penalty 10000
  \hskip-1\wd7\penalty 10000\box7} \def\cprime{$'$}
  \def\cftil#1{\ifmmode\setbox7\hbox{$\accent"5E#1$}\else
  \setbox7\hbox{\accent"5E#1}\penalty 10000\relax\fi\raise 1\ht7
  \hbox{\lower1.15ex\hbox to 1\wd7{\hss\accent"7E\hss}}\penalty 10000
  \hskip-1\wd7\penalty 10000\box7}

\setlength{\parindent}{0pt}%
\bigskip{\footnotesize%
  \textrm{Olivier Fouquet} \par
  \textsc{Laboratoire de mathématiques de Besan\c con}
  \par
  \textsc{16, route de Gray 25000 Besan\c con}
  \par
  \textsc{France} \par  
  \textit{E-mail address}: \texttt{olivier.fouquet@univ-fcomte.fr} \par


\begin{thebibliography}{10}

\bibitem{BellaicheEigen}
Jo\"{e}l Bella\"{\i}che.
\newblock Critical {$p$}-adic {$L$}-functions.
\newblock {\em Invent. Math.}, 189(1):1--60, 2012.

\bibitem{BellaicheRank}
Jo\"{e}l Bella\"{\i}che.
\newblock Ranks of {S}elmer groups in an analytic family.
\newblock {\em Trans. Amer. Math. Soc.}, 364(9):4735--4761, 2012.

\bibitem{BellaicheChenevier}
Jo{\"e}l Bella{\"{\i}}che and Ga{\"e}tan Chenevier.
\newblock Families of {G}alois representations and {S}elmer groups.
\newblock {\em Ast\'erisque}, (324):xii+314, 2009.

\bibitem{BlochKato}
Spencer Bloch and Kazuya Kato.
\newblock {$L$}-functions and {T}amagawa numbers of motives.
\newblock In {\em The {G}rothendieck {F}estschrift, Vol.\ I}, volume~86 of {\em
  Progr. Math.}, pages 333--400. Birkh\"auser Boston, Boston, MA, 1990.

\bibitem{BockleDemuskin}
Gebhard B{\"o}ckle.
\newblock Demu\v skin groups with group actions and applications to
  deformations of {G}alois representations.
\newblock {\em Compositio Math.}, 121(2):109--154, 2000.

\bibitem{BurnsFlachMotivic}
David Burns and Matthias Flach.
\newblock Motivic ${L}$-functions and {G}alois module structures.
\newblock {\em Math. Annalen}, 305:65--102, 1996.

\bibitem{ColmezBSD}
Pierre Colmez.
\newblock La conjecture de {B}irch et {S}winnerton-{D}yer {$p$}-adique.
\newblock {\em Ast\'erisque}, (294):ix, 251--319, 2004.

\bibitem{ColmezFoncteur}
Pierre Colmez.
\newblock Repr\'{e}sentations de {${\rm GL}_2(\bold Q_p)$} et
  {$(\phi,\Gamma)$}-modules.
\newblock {\em Ast\'{e}risque}, (330):281--509, 2010.

\bibitem{DeligneFonctionsL}
Pierre Deligne.
\newblock Valeurs de fonctions {$L$} et p\'eriodes d'int\'egrales.
\newblock In {\em Automorphic forms, representations and $L$-functions (Proc.
  Sympos. Pure Math., Oregon State Univ., Corvallis, Ore., 1977), Part 2},
  Proc. Sympos. Pure Math., XXXIII, pages 313--346. Amer. Math. Soc.,
  Providence, R.I., 1979.
\newblock With an appendix by N. Koblitz and A. Ogus.

\bibitem{DiamondFlachGuo}
Fred Diamond, Matthias Flach, and Li~Guo.
\newblock The {T}amagawa number conjecture of adjoint motives of modular forms.
\newblock {\em Ann. Sci. \'Ecole Norm. Sup. (4)}, 37(5):663--727, 2004.

\bibitem{EmertonPollackWeston}
Matthew Emerton, Robert Pollack, and Tom Weston.
\newblock Variation of {I}wasawa invariants in {H}ida families.
\newblock {\em Invent. Math.}, 163(3):523--580, 2006.

\bibitem{FontaineValeursSpeciales}
Jean-Marc Fontaine.
\newblock Valeurs sp\'eciales des fonctions {$L$} des motifs.
\newblock {\em Ast\'erisque}, (206):Exp.\ No.\ 751, 4, 205--249, 1992.
\newblock S{\'e}minaire Bourbaki, Vol. 1991/92.

\bibitem{FouquetX}
Olivier Fouquet.
\newblock {$p$}-adic properties of motivic fundamental lines.
\newblock {\em J. \'Ec. polytech. Math.}, 4:37--86, 2017.

\bibitem{FouquetPMB}
Olivier Fouquet.
\newblock Congruences and the {I}wasawa {M}ain {C}onjecture for modular forms.
\newblock In {\em Alg\`ebre et th\'eorie des nombres.}, Publ. Math. Univ.
  Franche-Comt\'e Besan\c con Alg\`ebr. Theor. Nr., page~17. Lab. Math. Besan\c
  con, Besan\c con, 2024.
\newblock to appear.

\bibitem{FouquetOchiai}
Olivier Fouquet and Tadashi Ochiai.
\newblock Control theorems for {S}elmer groups of nearly ordinary deformations.
\newblock {\em J. reine angew. Math.}, 666:163--187, 2012.

\bibitem{FouquetWan}
Olivier Fouquet and Xin Wan.
\newblock The {I}wasawa {M}ain {C}onjecture for modular motives.
\newblock soumis, disponible sur arxiv 2107.13726, 2022.

\bibitem{FujiwaraDeformation}
Kazuhiro Fujiwara.
\newblock {D}eformation rings and {H}ecke algebras in the totally real case,
  1999.
\newblock {P}reprint, 99pp.

\bibitem{FukayaKato}
Takako Fukaya and Kazuya Kato.
\newblock A formulation of conjectures on {$p$}-adic zeta functions in
  noncommutative {I}wasawa theory.
\newblock In {\em Proceedings of the {S}t. {P}etersburg {M}athematical
  {S}ociety. {V}ol. {XII}}, volume 219 of {\em Amer. Math. Soc. Transl. Ser.
  2}, pages 1--85, Providence, RI, 2006. Amer. Math. Soc.

\bibitem{GouveaMazur}
Fernando~Q. Gouv{\^e}a and Barry Mazur.
\newblock On the density of modular representations.
\newblock In {\em Computational perspectives on number theory ({C}hicago, {IL},
  1995)}, volume~7 of {\em AMS/IP Stud. Adv. Math.}, pages 127--142. Amer.
  Math. Soc., Providence, RI, 1998.

\bibitem{GreenbergControl}
Ralph Greenberg.
\newblock Galois theory for the {S}elmer group of an abelian variety.
\newblock {\em Compositio Math.}, 136(3):255--297, 2003.

\bibitem{GrossZagier}
Benedict~H. Gross and Don~B. Zagier.
\newblock Heegner points and derivatives of {$L$}-series.
\newblock {\em Invent. Math.}, 84(2):225--320, 1986.

\bibitem{HidaInventionesOrdinary}
Haruzo Hida.
\newblock Galois representations into {${\rm GL}\sb 2({\bf Z}\sb p[[X]])$}
  attached to ordinary cusp forms.
\newblock {\em Invent. Math.}, 85(3):545--613, 1986.

\bibitem{HidaNearlyOrdinary}
Haruzo Hida.
\newblock On nearly ordinary {H}ecke algebras for {${\rm GL}(2)$} over totally
  real fields.
\newblock In {\em Algebraic number theory}, volume~17 of {\em Adv. Stud. Pure
  Math.}, pages 139--169. Academic Press, Boston, MA, 1989.

\bibitem{IwasawaMainConjecture}
Kenkichi Iwasawa.
\newblock Analogies between number fields and function fields.
\newblock In {\em Some {R}ecent {A}dvances in the {B}asic {S}ciences, {V}ol. 2
  ({P}roc. {A}nnual {S}ci. {C}onf., {B}elfer {G}rad. {S}chool {S}ci., {Y}eshiva
  {U}niv., {N}ew {Y}ork, 1965-1966)}, pages 203--208. Belfer Graduate School of
  Science, Yeshiva Univ., New York, 1969.

\bibitem{JetchevSkinnerWan}
Dimitar Jetchev, Christopher Skinner, and Xin Wan.
\newblock The {B}irch and {S}winnerton-{D}yer formula for elliptic curves of
  analytic rank one.
\newblock {\em Camb. J. Math.}, 5(3):369--434, 2017.

\bibitem{KatoHodgeIwasawa}
Kazuya Kato.
\newblock Iwasawa theory and {$p$}-adic {H}odge theory.
\newblock {\em Kodai Math. J.}, 16(1):1--31, 1993.

\bibitem{KatoViaBdR}
Kazuya Kato.
\newblock Lectures on the approach to {I}wasawa theory for {H}asse-{W}eil
  {$L$}-functions via {$B\sb {\rm dR}$}. {I}.
\newblock In {\em Arithmetic algebraic geometry (Trento, 1991)}, volume 1553 of
  {\em Lecture Notes in Math.}, pages 50--163. Springer, Berlin, 1993.

\bibitem{KatoEulerOriginal}
Kazuya Kato.
\newblock Euler systems, {I}wasawa theory, and {S}elmer groups.
\newblock {\em Kodai Math. J.}, 22(3):313--372, 1999.

\bibitem{KatoEuler}
Kazuya Kato.
\newblock {$p$}-adic {H}odge theory and values of zeta functions of modular
  forms.
\newblock {\em Ast\'erisque}, (295):ix, 117--290, 2004.
\newblock Cohomologies $p$-adiques et applications arithm\'etiques. III.

\bibitem{KatoICM}
Kazuya Kato.
\newblock Iwasawa theory and generalizations.
\newblock In {\em International {C}ongress of {M}athematicians. {V}ol. {I}},
  pages 335--357. Eur. Math. Soc., Z\"urich, 2007.

\bibitem{KisinFlat}
Mark Kisin.
\newblock Moduli of finite flat group schemes, and modularity.
\newblock {\em Ann. of Math. (2)}, 170(3):1085--1180, 2009.

\bibitem{KobayashiIMC}
Shin-ichi Kobayashi.
\newblock Iwasawa theory for elliptic curves at supersingular primes.
\newblock {\em Invent. Math.}, 152(1):1--36, 2003.

\bibitem{KolyvaginEuler}
Viktor Kolyvagin.
\newblock Euler systems.
\newblock In {\em The {G}rothendieck {F}estschrift, Vol.\ II}, volume~87 of
  {\em Progr. Math.}, pages 435--483. Birkh\"auser Boston, Boston, MA, 1990.

\bibitem{KubotaLeopoldt}
Tomio Kubota and Heinrich-Wolfgang Leopoldt.
\newblock Eine {$p$}-adische {T}heorie der {Z}etawerte. {I}. {E}inf\"uhrung der
  {$p$}-adischen {D}irichletschen {$L$}-{F}unktionen.
\newblock {\em J. reine angew. Math.}, 214/215:328--339, 1964.

\bibitem{ManinPadic}
Ju.~I. Manin.
\newblock Non-{A}rchimedean integration and {$p$}-adic {J}acquet-{L}anglands
  {$L$}-functions.
\newblock {\em Uspehi Mat. Nauk}, 31(1(187)):5--54, 1976.

\bibitem{MazurValues}
Barry Mazur.
\newblock On the arithmetic of special values of {$L$} functions.
\newblock {\em Invent. Math.}, 55(3):207--240, 1979.

\bibitem{MazurSwinnertonDyer}
Barry Mazur and Peter Swinnerton-Dyer.
\newblock Arithmetic of {W}eil curves.
\newblock {\em Invent. Math.}, 25:1--61, 1974.

\bibitem{MazurTateTeitelbaum}
Barry Mazur, J.~Tate, and J.~Teitelbaum.
\newblock On {$p$}-adic analogues of the conjectures of {B}irch and
  {S}winnerton-{D}yer.
\newblock {\em Invent. Math.}, 84(1):1--48, 1986.

\bibitem{MazurWiles}
Barry Mazur and Andrew Wiles.
\newblock Class fields of abelian extensions of {${\bf Q}$}.
\newblock {\em Invent. Math.}, 76(2):179--330, 1984.

\bibitem{NakamuraUniversal}
Kentaro Nakamura.
\newblock Zeta morphisms for rank two universal deformations.
\newblock {\em Invent. Math.}, 234(1):171--290, 2023.

\bibitem{SelmerComplexes}
Jan Nekov{\'a}{\v{r}}.
\newblock Selmer complexes.
\newblock {\em Ast\'erisque}, (310):559, 2006.

\bibitem{OchiaiMainConjecture}
Tadashi Ochiai.
\newblock On the two-variable {I}wasawa main conjecture.
\newblock {\em Compos. Math.}, 142(5):1157--1200, 2006.

\bibitem{PaskunasMontreal}
Vytautas Pa\v{s}k\={u}nas.
\newblock The image of {C}olmez's {M}ontreal functor.
\newblock {\em Publ. Math. Inst. Hautes \'{E}tudes Sci.}, 118:1--191, 2013.

\bibitem{PerrinRiouLpadique}
Bernadette Perrin-Riou.
\newblock Fonctions {$L$} {$p$}-adiques des repr\'esentations {$p$}-adiques.
\newblock {\em Ast\'erisque}, (229):198, 1995.

\bibitem{PollackSupersingular}
Robert Pollack.
\newblock On the {$p$}-adic {$L$}-function of a modular form at a supersingular
  prime.
\newblock {\em Duke Math. J.}, 118(3):523--558, 2003.

\bibitem{RamakrishnaFlat}
Ravi Ramakrishna.
\newblock On a variation of {M}azur's deformation functor.
\newblock {\em Compositio Math.}, 87(3):269--286, 1993.

\bibitem{Rohrlich}
David~E. Rohrlich.
\newblock Nonvanishing of {$L$}-functions for {${\rm GL}(2)$}.
\newblock {\em Invent. Math.}, 97(2):381--403, 1989.

\bibitem{SchollMotivesModular}
Anthony~J. Scholl.
\newblock Motives for modular forms.
\newblock {\em Invent. Math.}, 100(2):419--430, 1990.

\bibitem{ShottonDeformation}
Jack Shotton.
\newblock Local deformation rings for {${\rm GL}_2$} and a {B}reuil-{M}\'ezard
  conjecture when {$\ell\ne p$}.
\newblock {\em Algebra Number Theory}, 10(7):1437--1475, 2016.

\bibitem{SkinnerWilesDur}
C.~M. Skinner and A.~J. Wiles.
\newblock Residually reducible representations and modular forms.
\newblock {\em Inst. Hautes \'Etudes Sci. Publ. Math.}, (89):5--126 (2000),
  1999.

\bibitem{SkinnerUrban}
Christopher Skinner and Eric Urban.
\newblock The {I}wasawa main conjectures for {$\rm GL\sb 2$}.
\newblock {\em Invent. Math.}, 195(1):1--277, 2014.

\bibitem{SprungIMC}
Florian Sprung.
\newblock Iwasawa theory for elliptic curves at supersingular primes: a pair of
  main conjectures.
\newblock {\em J. Number Theory}, 132(7):1483--1506, 2012.

\bibitem{TaylorWiles}
Richard Taylor and Andrew Wiles.
\newblock Ring-theoretic properties of certain {H}ecke algebras.
\newblock {\em Ann. of Math. (2)}, 141(3):553--572, 1995.

\bibitem{PARI2}
{The PARI~Group}, Univ. Bordeaux.
\newblock {\em {PARI/GP version \texttt{2.17.0}}}, 2024.

\bibitem{VenjakobETNC}
Otmar Venjakob.
\newblock From the {B}irch and {S}winnerton-{D}yer {C}onjecture to
  non-commutative {I}wasa theory via the {E}quivariant {T}amagawa {N}umber
  {C}onjecture-a survey.
\newblock In {\em $L$-functions and Galois representations (Durham, July
  2004)}, volume 320 of {\em London Math. Soc. Lecture Note Ser.}, pages
  333--380. Cambridge Univ. Press, Cambridge, 2004.

\bibitem{VisikPadic}
M.M. {Visik}.
\newblock {Non-Archimedean measures connected with Dirichlet series.}
\newblock {\em {Math. USSR, Sb.}}, 28:216--228, 1976.

\end{thebibliography}
\end{document}